\numberwithin{equation}{section}
\title[Asymptotic stability of the Kuramoto-Sakaguchi equation with inertia]{Asymptotic stability of the phase-homogeneous solution to the Kuramoto-Sakaguchi equation with inertia}
\author[Choi]{Young-Pil Choi}
\address[Young-Pil Choi]{\newline Department of Mathematics and Institute of Applied Mathematics \newline Inha University, Incheon 22212, Korea (Repbulic of)}
\email{ypchoi@inha.ac.kr}
\author[Ha]{Seung-Yeal Ha}
\address[Seung-Yeal Ha]{\newline Department of Mathematical Sciences and Research Institute of Mathematics\newline
    Seoul National University, Seoul 08826 and \newline
    Korea Institute for Advanced Study, Hoegiro 85, Seoul, 02455, Korea (Republic of)}
\email{syha@snu.ac.kr}
\author[Xiao]{Qinghua Xiao}
\address[Qinghua Xiao]{\newline Wuhan Institute of Physics and Mathematics, the Chinese Academy of Sciences \newline Wuhan 430071, People's Republic of China}
\email{xiaoqh@wipm.ac.cn}
\author[Zhang]{Yinglong Zhang}
\address[Yinglong Zhang]{\newline Department of Mathematical Sciences and \newline Industrial and Mathematical Data Analytics Research Center\newline Seoul National University, Seoul 08826, Korea (Repbulic of)}
\email{yinglongzhang@amss.ac.cn}
\newtheorem{theorem}{Theorem}[section]
\newtheorem{lemma}{Lemma}[section]
\newtheorem{proposition}{Proposition}[section]
\newtheorem{remark}{Remark}[section]
\newtheorem{definition}{Definition}[section]
\newcommand{\bbr}{\mathbb{R}}
\newcommand{\bbt}{\mathbb{T}}
\newcommand{\bbp}{\mathbb{P}}
\newcommand{\bbi}{\mathbb{I}}
\newcommand{\bbe}{\mathbb{E}}
\newcommand{\mci}{\mathcal{I}}
\newcommand{\mfa}{\mathfrak{a}}
\newcommand{\mfb}{\mathfrak{b}}
\begin{document}


\keywords{Cauchy problem, Kuramoto-Sakaguchi equation, synchronization, nonlinear Vlasov-Fokker-Planck equation}

\thanks{\textbf{Acknowledgment.} The work of Y.-P. Choi is supported by National Research Foundation of Korea(NRF) grant funded by the Korea government(MSIP) (No. 2017R1C1B2012918 and 2017R1A4A1014735) and POSCO Science Fellowship of POSCO TJ Park Foundation. The work of S.-Y. Ha is supported by the National Research Foundation of Korea(NRF-2017R1A2B2001864). The work of Q. Xiao was supported by grants from Youth Innovation Promotion Association and the National Natural Science Foundation of China \#11501556. The work of Y. Zhang is supported by the grant MSIP-2017R1A5A1015626}

\begin{abstract}
We present the global-in-time existence of strong solutions and its large-time behavior for the Kuramoto-Sakaguchi equation with inertia. The equation describes the evolution of the probability density function for a large ensemble of Kuramoto oscillators under the effects of inertia and stochastic noises. We consider a perturbative framework around the equilibrium, which is a Maxwellian type, and use the classical energy method together with our careful analysis on the macro-micro decomposition. We establish the global-in-time existence and uniqueness of strong solutions when the initial data are sufficiently regular, not necessarily close to the equilibrium, and the noise strength is also large enough. For the large-time behavior, we show the exponential decay of solutions towards the equilibrium under the same assumptions as those for the global regularity of solutions.
\end{abstract}

\maketitle


%
%
%
%
\section{Introduction} \label{sec:1}
\setcounter{equation}{0}
Synchronization phenomenon is ubiquitous in an ensemble of weakly coupled oscillators, e.g., hand clapping in opera and musical halls, flashing of fireflies and heart beating of pacemaker cells, etc \cite{A-B,BB,Erm,P-R-K,Str,Wi1,Wi2}. 
In this paper, we consider a large-time dynamics of infinitely many coupled Kuramoto oscillators with inertia in the presence of stochastic noises. Let  $(\theta^i_t, \omega_t^i)$ be the phase-frequency processes of the $i$-th Kuramoto oscillator with a natural frequency $\nu_i$, which is assumed to be a random variable extracted from a given distribution function $g = g(\nu) \geq 0$, in the presence of inertia and stochastic noise effect. Then, it is well known that the phase dynamics is governed by the system of Langevin type equations \cite{A-B, Ku, Saka88}:
\begin{align}
\begin{aligned} \label{B-1}
d\theta^i_t &= \omega^i_t dt, \quad t  > 0,~~i = 1, \cdots, N, \\
d\omega^i_t &= \frac{1}{m} \Big[  -\omega^i_t + \nu^i +   \frac{\kappa}{N}\sum_{j=1}^{N}\sin{(\theta^j_t - \theta^i_t)}  \Big] dt + \frac{\sqrt{2\sigma} }{m} dB^i_t,
\end{aligned}
\end{align}
where $m, \kappa$, and $\sigma$ are non-negative constants representing the strength of inertia, coupling strength and the noise strength, respectively, and the white noise process $dB^i_t$ satisfies the following mean zero and covariance relations:
\begin{equation*} \label{B-2}
\bbe[dB^i_t] = 0, \qquad \bbe[dB^i_s dB^j_t] = \delta_{ij} \delta(t-s), \quad 1 \leq i, j \leq N, \quad t, s > 0.
\end{equation*}
Note that in the absence of noise effect, i.e., $\sigma = 0$, system \eqref{B-1} formally becomes the inertial Kuramoto model:
\begin{equation}\label{Ku_wod}
m \frac{d^2{\theta}^i_t}{dt^2} = -\frac{d{\theta}^i_t}{dt} + \nu^i +  \frac{\kappa}{N} \sum_{j=1}^N  \sin(\theta^j_t - \theta^i_t).
\end{equation}

The system \eqref{Ku_wod} has been well studied, and it still remains a popular subject in nonlinear dynamics. For the system \eqref{Ku_wod}, the phase transition and hysteresis phenomena are studied in \cite{T-L-O1, T-L-O2} by analyzing the order parameter $r^\infty$ which is defined through the following relation:
\[
r^N(t) e^{\sqrt{-1}\varphi_N(t)} := \frac1N \sum_{j=1}^N e^{\sqrt{-1} \theta_j(t)}, \quad r^\infty := \lim_{t \to +\infty} \lim_{N \to +\infty} r^N(t).
\]

More precisely, no matter what the initial condition is, the continuous/discontinous phase transitions from the disordered state ($r^\infty = 0$) to the partially ordered states ($0 < r^\infty  \leq 1$) and hysteresis phenomena are observed in \cite{BM16, T-L-O1, T-L-O2}. The asymptotic phase/frequency synchronization and stability estimates are rigorously obtained in \cite{C-H-L-X-Y,C-H-N,C-H-Y1,H-L} under certain assumptions on the initial configurations, and the rigorous mean-field limit is provided in \cite{C-H-Y2}. Very recently, the asymptotic frequency synchronization estimate for the identical oscillators case is improved in \cite{C-H-M}. For the Kuramoto model, i.e., system \eqref{Ku_wod} with $m = 0$, similar works are done in \cite{C-C-H-K-K, C-H-J-K}. The assumptions on the initial phases are totally removed in \cite{H-K-R}, when the coupling strength $\kappa$ is large enough.

As the number of oscillators $N$ tends to infinity, i.e., $N \to +\infty$, we can derive the kinetic equation for $F$, namely {\it the inertial Kuramoto-Sakaguchi equation} using the standard BBGKY hierarchy or propagation of chaos \cite{A-B,H-K-P-Z}. Let $F=F(\theta, \omega,\nu,t)$ be an one-oscillator probability density function at phase $\theta$ with frequency $\omega$, and natural frequency $\nu$. Then, its phase-space-temporal evolution is governed by the following Vlasov-Fokker-Planck type equation:
\begin{align}
\begin{aligned} \label{FP}
& \partial_t F + \partial_{\theta} \big(\omega F \big) + \partial_\omega \big(\mathcal {A}[F]F \big) = \frac{\sigma}{m^2}\partial^2_\omega F, \quad (\theta, \omega, \nu, t) \in \mathbb{T} \times \mathbb{R}^2 \times \mathbb{R}_+,  \\
& \mathcal {A}[F](\theta, \omega, \nu, t):=  \frac{1}{m} \Big( - \omega + \nu + \kappa \underbrace{\int_{\mathbb{T} \times \mathbb{R}^2} \sin{(\theta_{*}-\theta)} F(\theta_{*},\omega_{*},\nu_{*},t) \,d\theta_{*}dw_{*}d\nu_{*}}_{=: \mathcal {S}[F]} \Big),
\end{aligned}
\end{align}
subject to the constrained initial data $F^{in}$:
\begin{align}
\begin{aligned} \label{ini}
& F(\theta, \omega, \nu, 0) = F^{in}(\theta, \omega, \nu), \quad (\theta, \omega, \nu) \in \bbt \times \bbr^2,  \\
&  \int_{\mathbb{T} \times \mathbb{R}} F^{in}(\theta,\omega,\nu)\,d\theta d\omega = g(\nu), \quad \int_{\mathbb{T} \times \mathbb{R}^2} F^{in}(\theta,\omega,\nu)\,d\theta d\omega d\nu = 1.
\end{aligned}
\end{align}

In this paper, we are interested in the global-in-time existence and uniqueness of strong solutions to the Cauchy problem \eqref{FP}-\eqref{ini}, and its large-time behavior. Note that the following Maxwellian type function $M = M(\omega, \nu)$ is a stationary homogeneous solution to \eqref{FP} (see Section \ref{sec:3.1} for more details):
\begin{equation} \label{A-1}
M := \frac{1}{2\pi} \sqrt{\frac{m}{2\pi \sigma}} \exp\Big[ - \frac{m}{2\sigma}(\omega - \nu)^2 \Big ] g(\nu).
\end{equation}
Motivated from the above observation, we reformulate equation \eqref{FP} in terms of perturbation $f$:
\begin{equation*} 
F = M + \sqrt{M} f.
\end{equation*}
Then, by a straightforward computation, we find that the perturbation $f$ satisfies
\begin{align}
\begin{aligned} \label{A-3}
& \partial_t f + \omega \partial_{\theta} f = {\mathcal L} f + {\mathcal N}(f, f), \\
& f(\theta, \omega, \nu, 0) = \frac{1}{\sqrt{M}} \big( F^{in} - M \big) =: f^{in},
\end{aligned}
\end{align}
where the linear operator $\mathcal{L} = {\mathcal L}_0 + \mathcal{L}_1$, and the nonlinear operator  ${\mathcal N}$ are given as follows:
\begin{align*}
\begin{aligned} 
{\mathcal L}_0 f &:= \frac{\sigma}{m^2}  \frac{1}{\sqrt{M}} \partial_{\omega} \Big[  M \partial_{\omega} \Big( \frac{f}{\sqrt{M}}  \Big) \Big], \cr
 {\mathcal L}_1 f &:=\frac{\kappa}{\sigma} (\omega - \nu) \sqrt{M} \mathcal{S}[\sqrt{M} f], \quad \mbox{and}\\
{\mathcal N}(f,f) &:= \frac{\kappa}{2\sigma} \mathcal {S}[\sqrt{M} f] (\omega - \nu) f -  \frac{\kappa}{m} \mathcal{S}[\sqrt{M} f] \partial_{\omega}f.
\end{aligned}
\end{align*}
It is worth mentioning that there are interesting works on the stability and asymptotic dynamics of solutions for the original continuum Kuramoto/Kuramoto-Sakaguchi equations; stability of the coherent and incoherent states are investigated in \cite{BCM,FGC16, GLP14, H-K-M-P, H-X1, H-X2, M-S3}, very recently, the stability and bifurcation for the continuum Kuramoto model are studied in \cite{Diet16,DFG16}, inspired by the proof of Landau damping for Vlasov-Poisson equation \cite{MV11} and Vlasov-HMF equation \cite{FR16}.
Despite these fruitful developments on the existence theory and stability estimates for the continuum Kuramoto/Kuramoto-Sakaguchi equations, to the best of the authors' knowledge, the global existence of solutions and its asymptotic dynamics for the inertial Kuramoto-Sakaguchi equation \eqref{FP} has not been studied so far.

It should be noted that the stationary homogeneous solution $M$ defined in \eqref{A-1} is not a global Maxweillian, and this leads to the fact that our main equation \eqref{FP} does not have the same properties of the classical linear Vlasov-Fokker-Planck equation. To be more specific, the linear operator $\mathcal{L}$ does not satisfy the coercivity estimate;  only ${\mathcal L}_0$ satisfies the coercivity estimate (see Lemma \ref{L3.2} (ii) for details). That is why we split the linear part $\mathcal{L}$ of the equation \eqref{A-3} into two linear parts ${\mathcal L}_0f$ and ${\mathcal L}_1f$. For that reason, a more delicate analysis is required for the energy estimates. We introduce the projection operator
\begin{align*}
\begin{aligned} 
& \bbp: L_{\omega, \nu}^{2} \rightarrow span\{ \chi_0, \chi_1 \},~~ \mbox{where}~ \chi_0 := \sqrt{2\pi} \sqrt{M} \quad \mbox{and} \quad \chi_1 := \sqrt{(2\pi m)/\sigma} (\omega - \nu) \sqrt{M}. 
\end{aligned}
\end{align*}
Then, by using the decomposition $f = \bbp f + (\bbi - \bbp)f$ and the estimates of $\bbp f$ and $(\bbi - \bbp)f$ separately, we get the coercivity estimate of $\mathcal{L}_0$ and control the bad terms produced by linear/nonlinear operators $\mathcal{L}_1 f$ and $\mathcal{N}(f,f)$. Our careful analysis of that enables us to obtain the uniform bound of the Sobolev norm $\|f\|_{H^s}$ in time, by which we show the global-in-time existence and uniqueness of strong solutions to the equation \eqref{A-3} without any smallness assumption on the initial data of the perturbation $f$. Now, we state the main results of the current work.
\begin{theorem} \label{T-I} Let $s \geq 1$. Suppose that the initial data $f^{in} \in H^{s}(\bbt \times \bbr^2)$ satisfying $F^{in} = M + \sqrt{M} f^{in} \geq 0$. We also assume that the strength of the noise $\sigma$ satisfies
\[
\sigma \geq C \max\left\{ m\kappa^2, \kappa, \frac1m, m\|g\|_{\nu}^{2} \right\}, \quad \mbox{where} \quad \|g\|_{\nu}^2 := \int_{\bbr} (1+\nu^2)g(\nu) \,d\nu < \infty,
\]
for sufficiently large $C>0$. Then, the Cauchy problem \eqref{A-3} admit a unique global solution $f(\theta, \omega, \nu, t) \in \mathcal{C}\big([0, \infty); H^{s}(\bbt \times \bbr^2) \big)$ such that
\begin{align*}
\begin{aligned}
F = M + \sqrt{M}f \geq 0 \quad \text{and} \quad\|f(t)\|_{H^{s}(\bbt \times \bbr^2)} \leq e^{-\bar{C} t} \|f^{in}\|_{H^{s}}, \quad t > 0,
\end{aligned}
\end{align*}
where $\bar{C}$ is a positive constant depending on $m$.
\end{theorem}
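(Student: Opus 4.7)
The plan is a continuation argument anchored on a coercive a priori $H^s$ estimate. I would first construct a unique local-in-time strong solution $f \in \mathcal{C}([0,T^*); H^s)$ to \eqref{A-3} by a standard regularization / Picard iteration scheme, and propagate the constraint $F = M + \sqrt{M}f \geq 0$ to the limit by rewriting \eqref{FP} in the $F$-variable as a linear parabolic equation with coefficients that are bounded once $f \in H^s$ with $s \geq 1$, and invoking the parabolic maximum principle at the level of the approximating sequence. Once local existence holds, both global existence and the decay claim would follow at once from an a priori bound of the form
\begin{equation*}
\frac{d}{dt}\mathcal{E}(t) + 2\bar{C}\,\mathcal{E}(t) \leq 0, \qquad \mathcal{E}(t) \sim \|f(t)\|_{H^s}^2,
\end{equation*}
since $\mathcal{E}$ then stays finite on $[0,T^*)$ and decays at the claimed rate.

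The heart of the proof is this a priori estimate. Differentiating $\|f\|_{H^s}^2$ along \eqref{A-3}, the transport $\omega \partial_\theta f$ is antisymmetric and cancels, while Lemma \ref{L3.2}(ii) furnishes the coercivity
\begin{equation*}
\langle \mathcal{L}_0 f, f\rangle \leq -\frac{c\,\sigma}{m^2}\,\|(\mathbb{I}-\mathbb{P})f\|_{L^2_{\omega,\nu}}^2,
\end{equation*}
which however only dissipates the microscopic component. To recover dissipation on the macroscopic projection $\mathbb{P}f = \alpha_0(\theta,\nu,t)\chi_0 + \alpha_1(\theta,\nu,t)\chi_1$, I would introduce an interaction functional $\mathcal{E}^{int}(t)$ of Kawashima type, built by testing \eqref{A-3} against carefully chosen multipliers coupling $\partial_\theta \mathbb{P}f$ with $(\mathbb{I}-\mathbb{P})f$; computing $(d/dt)\mathcal{E}^{int}$ and substituting the macroscopic equations for $(\alpha_0,\alpha_1)$ should produce a negative quadratic form in $(\alpha_0, \partial_\theta\alpha_0, \alpha_1)$ up to remainders controlled by $\|(\mathbb{I}-\mathbb{P})f\|_{H^s}$ and the forcing. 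Adding $\eta\,\mathcal{E}^{int}$ with a small $\eta>0$ to $\|f\|_{H^s}^2$ yields an equivalent energy whose derivative bounds the full $H^s$ norm from below.

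The remaining task is to absorb the contributions of $\mathcal{L}_1 f$ and $\mathcal{N}(f,f)$. For the linear bad term, the pointwise bound $\|\mathcal{S}[\sqrt{M}f]\|_{L^\infty_\theta} \lesssim \|f\|_{L^2_{\omega,\nu}}$ together with the Gaussian factor $(\omega-\nu)\sqrt{M}$ yield $|\langle \mathcal{L}_1 f, f\rangle_{H^s}| \lesssim (\kappa/\sigma)\,C(m,\|g\|_\nu)\,\|f\|_{H^s}^2$, which is dominated by the coercive dissipation under precisely the largeness hypothesis on $\sigma$ in the statement: the constraints $\sigma \gtrsim m\kappa^2,\ \kappa,\ 1/m,\ m\|g\|_\nu^2$ correspond one-to-one to the bad constants produced here and in the $\mathcal{E}^{int}$ step. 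For the nonlinearity, the transport-type piece $(\kappa/m)\mathcal{S}[\sqrt{M}f]\,\partial_\omega f$ vanishes at the $L^2$ level by integration by parts in $\omega$ (since $\mathcal{S}[\sqrt{M}f]$ does not depend on $\omega$) and at higher orders produces only commutators; combined with the Sobolev embedding $H^s(\mathbb{T})\hookrightarrow L^\infty(\mathbb{T})$ and the order-parameter estimate $\|\mathcal{S}[\sqrt{M}f]\|_{H^s_\theta} \lesssim \|f\|_{L^2}$, one obtains
\begin{equation*}
|\langle \mathcal{N}(f,f), f\rangle_{H^s}| \lesssim \Bigl(\tfrac{\kappa}{m} + \tfrac{\kappa}{\sigma}\Bigr)\,\|f\|_{H^s}^2\,\|f\|_{L^2}.
\end{equation*}
Because the nonlinearity always carries an extra factor of $\|f\|_{L^2}$ coming from the order parameter, the cubic contribution is absorbed into the quadratic dissipation as long as the net quadratic coefficient is positive; the largeness of $\sigma$ is sized precisely to ensure this uniformly, which permits a global bootstrap \emph{without} any smallness hypothesis on $f^{in}$.

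The main obstacle I expect is the design of $\mathcal{E}^{int}$ so that the macroscopic dissipation it produces is genuinely positive and robust against the non-self-adjoint forcing from $\mathcal{L}_1$, and the quantitative tracking of the $\sigma$-, $m$-, $\kappa$- and $\|g\|_\nu$-dependence of every constant so that the resulting differential inequality closes exactly under the assumptions stated in the theorem; once this is achieved, Gr\"onwall gives the exponential decay and the global existence simultaneously.
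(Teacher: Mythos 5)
Your overall architecture matches the paper's proof: local existence by iteration with $F\ge 0$ propagated via a parabolic maximum principle, coercivity of $\mathcal{L}_0$ only on the microscopic part, a Kawashima-type interaction functional to recover macroscopic dissipation (this is exactly what Lemma \ref{L6.4} and Proposition \ref{P6.1} do, working with $\int_{\mathbb{T}}\partial_\theta^k f_1\,\partial_\theta^{k+1}f_0\,d\theta$), and absorption of $\mathcal{L}_1$ and the Vlasov nonlinearity using the largeness of $\sigma$ and the order-parameter structure $\mathcal{S}[\sqrt{M}f]\in L^\infty_\theta$.

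However, there is a genuine gap in your treatment of the nonlinearity, and it is precisely at the point that separates a ``stability near equilibrium'' result from the theorem's claim. You arrive at a cubic bound
\[
\bigl|\langle \mathcal{N}(f,f), f\rangle_{H^s}\bigr| \lesssim \Bigl(\tfrac{\kappa}{m}+\tfrac{\kappa}{\sigma}\Bigr)\,\|f\|_{H^s}^2\,\|f\|_{L^2},
\]
and then argue that it is absorbed because ``the net quadratic coefficient is positive.'' This does not close: a cubic term cannot be dominated by a quadratic dissipation uniformly in $\|f\|$; largeness of $\sigma$ shrinks the prefactor $\kappa/\sqrt{m\sigma}$ but leaves the factor $\|f\|_{L^2}$ uncontrolled, so if $\|f^{in}\|_{L^2}$ is large the differential inequality does not yield decay. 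To truly dispense with smallness, you must exploit the constraint $F=M+\sqrt{M}f\ge 0$ together with mass conservation: these give the sharp, $f$-independent bound
\[
\|\partial_\theta^k\mathcal{S}[\sqrt{M}f](t)\|_{L^\infty}\le \min\bigl\{1,\;\|f_0(t)\|_{L^2}\bigr\},
\]
(Lemma \ref{L6.1}(i) in the paper), because $\mathcal{S}[\sqrt{M}f]=\mathcal{S}[F]$ is an average of $\sin(\cdot)$ against the probability density $F$. With this a priori $L^\infty$ bound, the nonlinearity is effectively \emph{quadratic} in the energy with a coefficient $\kappa/\sqrt{m\sigma}$, and the assumption $\sigma\gtrsim m\kappa^2$ makes the absorption work for all data. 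Without this ingredient your bootstrap only proves the result for $\|f^{in}\|_{L^2}$ small enough, which is a strictly weaker statement than Theorem \ref{T-I}.

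A secondary (and fixable) under-specification: you say nothing concrete about how the $\omega$-derivative directions are controlled. The transport term $\omega\partial_\theta$ is not antisymmetric once $\partial_\omega$'s are applied; the commutator $[\partial_\omega^j,\omega\partial_\theta]=j\,\partial_\theta\partial_\omega^{j-1}$ transfers $\omega$-derivatives into $\theta$-derivatives of lower $\omega$-order, which forces the weighted induction in $j$ with $m$-dependent weights that the paper carries out in Lemma \ref{L6.7} and Proposition \ref{P6.2}. This must be made explicit for the $H^s$ estimate to close, because the mixed derivatives $\partial_\theta^i\partial_\omega^j(\mathbb{I}-\mathbb{P})f$ feed into the macroscopic balance laws as well.
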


\begin{remark} The smallness assumption on the initial data $f^{in}$ is not required for the global existence of solutions. Thus our result is more stronger than the asymptotic stability estimate of solutions near equilibrium.
\end{remark}

The rest of this paper is organized as follows. In Section \ref{sec:2}, we briefly discuss the basic properties of the inertial Kuramoto-Sakaguchi equation, and we also introduce macroscopic observables and derive local balanced laws for mass, momentum and energy density.  In Section \ref{sec:3}, we derive the stationary homogeneous solution to \eqref{FP}, which is the Maxwellian type given in \eqref{A-1}. We also present the coercivity estimate of the linear operator $\mathcal{L}_0$ based on the macro-micro decomposition through the projection operator $\bbp$. In Section \ref{sec:4}, we study the existence and uniqueness of local-in-time strong solutions to \eqref{A-3}. Then, in Section \ref{sec:5}, by obtaining  uniform a priori estimates, we have the existence and uniqueness of global-in-time strong solutions and the exponential decay towards the equilibrium under certain conditions given in Theorem \ref{T-I}.  Finally, Section \ref{sec:6} is devoted to the brief summary of the present work and future directions. In the sequel, we list several simplified notation to be used throughout the paper. \newline

\noindent {\bf Notation}: Given any functions $h_1 = h_1(\theta, t)$ and $h_2 = h_2(\theta,\omega,\nu,t)$ defined on $\bbt \times \bbr_+$ and $\bbt \times \bbr^2 \times \bbr_+$, respectively.
\begin{enumerate}
\item We denote by $L_{\theta}^{2}$, $L_{\omega}^{2}$, $L_{\omega, \nu}^{2}$, and $L_{\theta,\omega, \nu}^{2}$ the usual Lebesgue space $L^2(\bbt)$, $L^{2}(\bbr)$, $L^{2}(\bbr^2)$, and $L^{2}(\bbt \times \bbr^2)$, respectively, and $\|\cdot\|_{L^2}$ represents the norm $\|\cdot\|_{L_{\theta, \omega, \nu}^{2}}$ for function $h_2$ and the norm $\|\cdot\|_{L_{\theta}^{2}}$ for function $h_1$ if there is no confusion. \newline

\item Let $k$ and $\ell$ be non-negative integers. $H^k$ denotes the $k$-th order $L^2$ Sobolev space. $\mathcal{C}^\ell([0,T]; H^k)$ denotes the set of $k$-times continuously differentiable functions from an interval $[0,T] \subset \mathbb{R}$ into Banach space $H^k$. \newline

\item For $\alpha = 1 + (m/\sigma)(\omega - \nu)^2$, $\beta = \sigma/m$, we denote by $L_{\mu}^{2}$ and $L_{\theta, \mu}^{2}$ the weighted Lebesgue space $L_{\omega, \nu}^{2}$ and $L_{\theta, \omega, \nu}^{2}$, respectively, and we set
\[ \|h_i\|_{L_\mu^2}^{2} := \int_{\bbr^2} \alpha |h_i|^2 + \beta |\partial_{\omega} h_i|^2 \,d\omega d\nu, \quad \|h_i\|_{\mu}^2 := \int_{\mathbb{T}} \|h_i\|_{L_{\mu}^2}^2 \,d\theta. \]

\item $f_1 \lesssim f_2$ represents that there exists a positive constant $c>0$ such that $f_1 \leq c f_2$; $f_1 \simeq f_2$ means that there exists a constant $c>0$ such that $c^{-1}f_1 \leq f_2 \leq cf_1$. We also denote by $C$ a generic positive constant independent of $t$.
\end{enumerate}

%
%
%
%
\section{Preliminaries} \label{sec:2}
In this section, we study basic properties of the equation \eqref{FP}. We then introduce macroscopic observables and derive local balanced laws for mass, momentum, and energy density.


\begin{lemma} \label{L2.1}
For a given $T \in (0, \infty]$, let $F =  F(\theta, \omega, \nu, t)$ be a global classical solution to \eqref{FP} - \eqref{ini} in the time-interval $[0, T)$ with initial datum $F^{in}$. Then, we have
\[
F(\theta, \omega,t)\geq0, \quad  \int_{\mathbb{T} \times \mathbb{R}} F(\theta,\omega,\nu, t)\,d\theta d\omega = g(\nu), \quad \mbox{and} \quad \int_{\bbt \times \bbr^2} F(\theta, \omega, \nu, t)\,d\theta d\omega d\nu = 1.
\]
\end{lemma}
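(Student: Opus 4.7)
The plan is to treat the three assertions separately, exploiting the divergence form of \eqref{FP} for the two integral identities and a maximum-principle argument for non-negativity. Throughout, I would use the fact that on a classical solution the nonlinear coupling $\mathcal{S}[F](\theta,t)$ depends only on $(\theta,t)$, so $\mathcal{A}[F]$ may be regarded as a prescribed smooth coefficient whose $\omega$-derivative equals the constant $-1/m$.

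For the two integral identities, I would simply integrate \eqref{FP} in divergence form. Integration in $\theta$ over $\mathbb{T}$ kills the flux $\partial_\theta(\omega F)$ by periodicity, while integration in $\omega$ over $\mathbb{R}$ kills both $\partial_\omega(\mathcal{A}[F] F)$ and $(\sigma/m^2)\partial_\omega^2 F$ via the decay of $F$ and $\partial_\omega F$ at infinity. This yields
\[
\partial_t \int_{\mathbb{T}\times\mathbb{R}} F(\theta,\omega,\nu,t)\, d\theta\, d\omega = 0,
\]
and the second initial constraint in \eqref{ini} then produces the $\nu$-marginal identity $\int F\, d\theta\, d\omega = g(\nu)$. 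A further integration in $\nu$ (equivalently, integrating \eqref{FP} directly over $\mathbb{T}\times\mathbb{R}^2$) gives the total mass identity.

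For non-negativity, I would rewrite \eqref{FP} in non-conservative form as
\[
\partial_t F + \omega\, \partial_\theta F + \mathcal{A}[F]\, \partial_\omega F - \frac{\sigma}{m^2}\partial_\omega^2 F = \frac{1}{m} F,
\]
which is a linear degenerate parabolic equation of Kolmogorov type with smooth coefficients (fixed by the given classical solution) together with a zeroth-order source $F/m$. Substituting $F = e^{t/m} G$ absorbs this source and reduces the problem to the standard weak maximum principle for linear degenerate parabolic equations: since $G^{in} = F^{in} \geq 0$, one obtains $G(\cdot,t) \geq 0$ and therefore $F \geq 0$ on $[0,T)$. Alternatively, one may invoke the stochastic representation, since \eqref{FP} is precisely the forward Kolmogorov equation of the mean-field SDE \eqref{B-1} with self-consistent drift $\mathcal{A}[F]$, so $F(\cdot,t)$ is the law of a continuous process and hence automatically a non-negative density.

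The only technical subtlety I anticipate is justifying the vanishing of the boundary terms at $|\omega|=\infty$ in the integration argument and the applicability of the maximum principle; both reduce to the classical regularity and sufficient decay of $F$ and $\partial_\omega F$, which are implicit in the hypothesis that a global classical solution exists. In a weak-solution setting one would instead pair with a smooth cutoff $\chi_R(\omega)$ and a regularization of the negative part $F_-$ and then pass to the limit, but under the present hypothesis this step is essentially routine.
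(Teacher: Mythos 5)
Your proposal is correct and follows essentially the same route as the paper: the integral identities come from integrating the equation in divergence form (with the flux terms vanishing by periodicity in $\theta$ and decay in $\omega$), and non-negativity is obtained by the same substitution $F=e^{t/m}u$ to remove the zeroth-order term $F/m$ arising from $\partial_\omega\mathcal{A}[F]=-1/m$, followed by a maximum-principle (or, equivalently, Feynman--Kac/stochastic-representation) argument for the resulting drift--diffusion equation. Your remark about justifying the decay at $|\omega|=\infty$ and the applicability of the maximum principle is a fair gloss on a step the paper leaves implicit.
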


\begin{proof}
It is easy to get that
\begin{align*}
\begin{aligned}
\frac{d}{dt}  \int_{\mathbb{T} \times \mathbb{R}} F(\theta,\omega,\nu, t)\,d\theta d\omega = \frac{d}{dt} \int_{\bbt \times \bbr^2} F(\theta, \omega, \nu, t) \,d\theta d\omega d\nu = 0.
\end{aligned}
\end{align*}
Thus we obtain
\begin{align*}
\begin{aligned}
&  \int_{\mathbb{T} \times \mathbb{R}} F(\theta,\omega,\nu, t)\,d\theta d\omega =  \int_{\mathbb{T} \times \mathbb{R}} F^{in}(\theta,\omega,\nu)\,d\theta d\omega = g(\nu), \\
& \int_{\bbt \times \bbr^2} F(\theta, \omega, \nu, t) \,d\theta d\omega d\nu = \int_{\bbt \times \bbr^2} F^{in}(\theta, \omega, \nu) \,d\theta d\omega d\nu = 1.
\end{aligned}
\end{align*}
The remaining thing is to prove the non-negativity of $F$. For this, we set $F = e^{t/m}u$, then the new variable $u$ satisfies 
\begin{align} \label{e2-3}
\begin{aligned}
\partial_t u = \frac{\sigma}{m^2}\partial^2_\omega u - \omega \partial_{\theta} u - \frac{1}{m} \Big( - \omega + \nu + \kappa \mathcal {S}[e^{\frac{t}{m}} u] \Big) \partial_{\omega}u.
\end{aligned}
\end{align}
Now we use the strong maximum principle or Feynman-Kac's formula to see that the solution of \eqref{e2-3} can not attain a negative minimum if $t > 0$. Thus, by using initial assumption $u|_{t= 0} = F^{in} \geq 0$, we get
\[
u(\theta, \omega, \nu, t) \geq 0 \quad \text{for} \quad t \geq 0,
\]
which yields the non-negativity of $F$.
\end{proof}

\subsection{Local balanced laws} \label{sec:2.2}
For simplicity of presentation, we consider the case of identical oscillators, i.e., $g(\nu) = \delta$, where $\delta$ is the Dirac Delta function whose mass is concentrated at $\nu = 0$. In this case, system \eqref{FP} becomes
\begin{align}
\begin{aligned} \label{B-3-1}
& \partial_t F + \partial_{\theta} \big(\omega F \big) + \partial_\omega \big(\mathcal {A}[F]F \big) = \frac{\sigma}{m^2}\partial^2_\omega F, \quad (\theta, \omega, t)
 \in \mathbb{T} \times \mathbb{R}  \times \mathbb{R}_+, \\
& \mathcal {A}[F](\theta, \omega, t) = \frac{1}{m} \Big( - \omega +  \kappa \mathcal {S}[F] \Big),
\end{aligned}
\end{align}
where $F = F(\theta, \omega, t)$ depends only on $\theta$, $\omega$, and $t$. We first introduce generalized macroscopic observables:
\begin{align}
\begin{aligned} \label{B-4}
& \rho := \int_{\bbr} F \,d\omega:~\mbox{local mass density}, \\
& \rho u :=  \int_{\bbr} \omega F \,d\omega:~\mbox{local momentum density}, \\
& \rho \Big( e + \frac{1}{2} u^2 \Big)  :=  \int_{\bbr} \frac{\omega^2}{2} F d\omega:~\mbox{local energy density}, \\
& p :=  \int_{\bbr} |\omega - u|^2  F \,d\omega~:\mbox{pressure}, \quad \mbox{and}  \\
& q:=   \int_{\bbr} \frac{(\omega- u)^3}{2}  F \,d\omega    :~\mbox{heat flux}.
\end{aligned}
\end{align}
Next, we formally derive a system of local conservation laws for observable variables defined in \eqref{B-4}. For notational simplicity, we suppress $t$-dependance in $F, \rho$, and $u$:
\[ F(\theta, \omega) := F(\theta, \omega, t), \quad \rho(\theta) := \rho(\theta, t), \quad \mbox{and} \quad u(\theta) := u(\theta, t). \]
\begin{lemma}
Let $F = F(\theta, \omega, t)$ be a global classical solution to \eqref{B-3-1} decaying to zero sufficiently fast as $|w| \to \infty$. Then, the local observables defined in \eqref{B-4} satisfy a  system of local balanced laws:
\begin{align}
\begin{aligned} \label{B-4-1}
& \partial_t \rho + \partial_\theta (\rho u) = 0, \\
& \partial_t (\rho u) + \partial_\theta (\rho u^2 + p) =  \frac{1}{m} \Big( -\rho u + \kappa \rho \int_{\bbt} \sin(\theta_* - \theta) \rho(\theta_*) \,d\theta_* \Big), \\
& \partial_t \Big[ \rho \Big(e + \frac{1}{2} |u|^2 \Big) \Big] + \partial_\theta  \Big( q +  \frac{3pu}{2} +  \frac{\rho}{2} u^3     \Big)  \\
& \hspace{3cm} =  \frac{\sigma}{m^2} \rho -  \frac{1}{m} \Big[ p + \rho u^2 - \kappa \rho u \int_{\bbt} \sin(\theta_* - \theta) \rho(\theta_*) \,d\theta_* \Big].
\end{aligned}
\end{align}
\end{lemma}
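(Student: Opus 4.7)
The plan is to obtain \eqref{B-4-1} by the classical moment method: multiply the kinetic equation \eqref{B-3-1} by the three velocity weights $1$, $\omega$, and $\omega^2/2$, and integrate in $\omega$ over $\mathbb{R}$. Throughout, the boundary terms produced by integration by parts vanish thanks to the assumed rapid decay of $F$ and of $\partial_\omega F$ as $|\omega| \to \infty$, while the nonlocal coupling
\begin{equation*}
\mathcal{S}[F](\theta, t) = \int_{\mathbb{T}} \sin(\theta_* - \theta)\,\rho(\theta_*,t)\,d\theta_*
\end{equation*}
(obtained from the definition in \eqref{FP} after integrating $\omega_*$ out in the identical-oscillator case $g=\delta$) is $\omega$-independent and can be pulled out of every $\omega$-integral.

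For the continuity equation, I integrate \eqref{B-3-1} directly in $\omega$: the time derivative becomes $\partial_t \rho$, the transport term becomes $\partial_\theta(\rho u)$, and both the drift term $\partial_\omega(\mathcal{A}[F]F)$ and the diffusion $\frac{\sigma}{m^2}\partial_\omega^2 F$ vanish after integration by parts. For the momentum equation, I multiply \eqref{B-3-1} by $\omega$ and integrate: the transport term yields $\partial_\theta \int \omega^2 F\,d\omega$, which I rewrite as $\partial_\theta(\rho u^2 + p)$ via the elementary identity $\int \omega^2 F\,d\omega = \rho u^2 + p$; one integration by parts turns the drift contribution into $-\int \mathcal{A}[F]F\,d\omega$, and substituting $\mathcal{A}[F] = \frac{1}{m}(-\omega + \kappa \mathcal{S}[F])$ yields precisely the right-hand side of the momentum law in \eqref{B-4-1}; the diffusion term again disappears.

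For the energy equation, I multiply \eqref{B-3-1} by $\omega^2/2$ and integrate. The central computation is the third-moment identity
\begin{equation*}
\int_{\mathbb{R}} \frac{\omega^3}{2}\, F\,d\omega = q + \frac{3 p u}{2} + \frac{\rho u^3}{2},
\end{equation*}
obtained by expanding $\omega = (\omega - u) + u$ and using $\int (\omega - u) F\,d\omega = 0$; this reproduces exactly the spatial flux appearing in the energy equation. One integration by parts in the drift term (now with weight $\omega$) contributes $\frac{1}{m}(p + \rho u^2) - \frac{\kappa}{m}\mathcal{S}[F]\,\rho u$, while two integrations by parts applied to $\frac{\sigma}{m^2}\partial_\omega^2 F$ reduce it to $\frac{\sigma}{m^2}\rho$. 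Collecting these pieces gives the third identity of \eqref{B-4-1}.

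There is no genuine obstacle in this derivation; the only delicate ingredient is the rapid decay of $F$ and $\partial_\omega F$ in $\omega$ needed to discard all boundary terms at $\omega = \pm\infty$, which the lemma's hypothesis grants, so the proof reduces to careful bookkeeping of the moment computations sketched above.
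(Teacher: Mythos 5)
Your proposal is correct and follows essentially the same route as the paper: multiply the kinetic equation by the velocity weights $1$, $\omega$, $\omega^2/2$, integrate in $\omega$, use rapid decay to drop boundary terms, and invoke the second- and third-moment identities $\int\omega^2 F\,d\omega = p+\rho u^2$ and $\int\tfrac{\omega^3}{2}F\,d\omega = q+\tfrac{3pu}{2}+\tfrac{\rho}{2}u^3$ together with the $\omega$-independence of $\mathcal{S}[F]$ to evaluate the drift moments. The only caution is bookkeeping of signs when transferring the drift contribution from the left-hand side to the right-hand side of the energy law, but your moment computations are all sound.
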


\begin{proof}
\noindent $\bullet$~(Local conservation of mass): We integrate \eqref{B-3-1} over $\omega$ to obtain
\begin{equation} \label{B-5}
\partial_t \rho + \partial_\theta (\rho u) = 0.
\end{equation}

\vspace{0.2cm}

\noindent $\bullet$~(Local balanced law of momentum): We multiply $\omega$ to \eqref{B-3-1} to get
\[ \partial_t (\omega F) + \partial_\theta (\omega^2 F)
+ \partial_{\omega} \Big (\omega  \mathcal {A}[F] F - \frac{\sigma}{m^2} \omega \partial_\omega F + \frac{\sigma}{m^2} F \Big) =  \mathcal {A}[F] F.  \]
We integrate the above relation with respect to $\omega$ to obtain
\begin{equation} \label{B-6}
\partial_t (\rho u) + \partial_\theta \int_{\bbr} \omega^2 F \,d\omega  = \int_{\bbr}  \mathcal {A}[F] F\, d\omega.
\end{equation}
Note that
\begin{align}
\begin{aligned} \label{B-7}
& \int_{\bbr} \omega^2 F \,d\omega  = \int_{\bbr} ( |\omega - u|^2 + |u|^2 + 2u(\omega - u) ) F\, d\omega  = p + \rho u^2, \\
& \int_{\bbr}  \mathcal {A}[F] F\, d\omega = \frac{1}{m} \Big( -\rho u + \kappa \int_{\bbt \times \bbr^2} \sin(\theta_* - \theta) F(\theta_*, \omega_*) F(\theta, \omega)  \,d\omega_* d\omega d\theta_*  \Big) \\
& \hspace{2cm} =  \frac{1}{m} \Big( -\rho u + \kappa \int_{\bbt} \sin(\theta_* - \theta) \rho(\theta_*) \rho(\theta) \,d\theta_* \Big).
\end{aligned}
\end{align}
Finally, we combine \eqref{B-6} and \eqref{B-7} to derive a local balanced law for momentum:
\begin{equation} \label{B-8}
\partial_t (\rho u) + \partial_\theta (\rho u^2 + p) =  \frac{1}{m} \Big( -\rho u + \kappa \rho \int_{\bbt} \sin(\theta_* - \theta) \rho(\theta_*) \,d\theta_* \Big).
\end{equation}

\vspace{0.2cm}

\noindent $\bullet$~(Local balanced law of energy): We multiply $\omega^2/2$ to \eqref{B-3-1} to get
\begin{equation} \label{B-9}
\partial_t \Big( \frac{\omega^2}{2} F \Big) + \partial_\theta \Big(\frac{\omega^3}{2} F \Big)  + \partial_\omega \Big[  \frac{\omega^2}{2} {\mathcal A}[F] F + \frac{\sigma}{m^2} \Big( \omega F - \frac{\omega^2}{2} \partial_\omega F \Big)               \Big] = \omega {\mathcal A}[F] F +   \frac{\sigma}{m^2} F.
\end{equation}
We then integrate \eqref{B-9} with respect to $\omega$ to obtain
\begin{equation} \label{B-10}
\partial_t \Big[ \rho \Big(e + \frac{1}{2} |u|^2 \Big) \Big]  + \partial_\theta \Big( \int_\bbr  \frac{\omega^3}{2} F \,d\omega \Big) = \frac{\sigma}{m^2} \rho + \int_{\bbr}  \omega {\mathcal A}[F] F \,d\omega.
\end{equation}
We use the identity
\[ \frac{\omega^3}{2} = \frac{1}{2}(\omega - u + u)^3 = \frac{1}{2} (\omega - u)^3 + \frac{3}{2} (\omega - u)^2 u + \frac{3}{2} (\omega - u) u^2 + \frac{1}{2} u^3 \]
to rewrite
\begin{equation} \label{B-11}
\int_\bbr  \frac{\omega^3}{2} F \,d\omega = q +  \frac{3pu}{2} +  \frac{\rho}{2} u^3.
\end{equation}
On the other hand, we also have
\begin{align}
\begin{aligned} \label{B-12}
\int_{\bbr}  \omega {\mathcal A}[F] F d\omega & = \frac{1}{m} \int_{\bbt} \Big[ -\omega^2 F + \kappa \omega F {\mathcal S}[F]   \Big]  d\omega =  \frac{1}{m} \Big( -p - \rho u^2  + \kappa \int_{\bbr} \omega F {\mathcal S}[F]  \,d\omega \Big)   \\
&= \frac{1}{m} \Big[ -p - \rho u^2 + \kappa \int_{\bbt} \sin(\theta_* - \theta) (\rho u)(\theta) \rho(\theta_*) \,d\theta_* \Big],
\end{aligned}
\end{align}
where in the second relation of \eqref{B-12}, we used the identity $\eqref{B-7}$. \newline

In \eqref{B-10}, we finally combine \eqref{B-11} and \eqref{B-12} to derive a balanced law for energy:
\begin{equation} \label{B-13}
\partial_t \Big[ \rho \Big(e + \frac{1}{2} |u|^2 \Big) \Big] + \partial_\theta  \Big( q +  \frac{3pu}{2} +  \frac{\rho}{2} u^3     \Big) =  \frac{\sigma}{m^2} \rho -  \frac{1}{m} \Big( p + \rho u^2 - \kappa  \rho u \int_{\bbt} \sin(\theta_* - \theta)  \rho(\theta_*) \,d\theta_* \Big).
\end{equation}
Now we collect all estimates \eqref{B-5}, \eqref{B-8}, and \eqref{B-13} to derive local balance laws.
\end{proof}

\begin{remark}
Note that the local conservation laws \eqref{B-4-1} are not a closed system. However, if we take $F = \rho e^{-\omega^2}$ to be a global Maxwellian, it is easy to see that the heat flux $q = 0$, and we obtain a hydrodynamic model for $\rho, \rho u$ and $\rho e$:
\begin{align*}
\begin{aligned}
& \partial_t \rho + \partial_\theta (\rho u) = 0, \\
& \partial_t (\rho u) + \partial_\theta (\rho u^2 + p) =  \frac{1}{m} \Big( -\rho u + \kappa \rho \int_{\bbt} \sin(\theta_* - \theta) \rho(\theta_*) \,d\theta_* \Big), \\
& \partial_t \Big[ \rho \Big( e + \frac{1}{2} |u|^2 \Big) \Big] + \partial_\theta  \Big( \frac{3pu}{2} +  \frac{\rho}{2} u^3 \Big) =   \frac{\sigma}{m^2} \rho -  \frac{1}{m} \Big( p + \rho u^2 - \kappa \rho u \int_{\bbt} \sin(\theta_* - \theta) \rho(\theta_*) \,d\theta_* \Big).
\end{aligned}
\end{align*}
\end{remark}
Note that the macroscopic system \eqref{B-4-1} is exactly the Euler system with synchronization dissipations. For $t \geq 0$, we set
\[ M_0(t) := \int_\bbt \rho(\theta, t) \,d\theta \quad \mbox{and} \quad M_1(t) :=  \int_\bbt (\rho u)(\theta, t)\, d\theta.   \]
\begin{lemma}
Let $(\rho, \rho u)$ be a global classical solution to \eqref{B-4-1}. Then, we have
\[ M_0(t) = M_0(0)\quad \mbox{and} \quad M_1(t) = M_1(0) e^{-\frac{t}{m}}, \quad t \geq 0.   \]
\end{lemma}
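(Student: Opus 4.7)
The plan is to integrate each of the two relevant equations in \eqref{B-4-1} over $\theta \in \bbt$ and exploit periodicity together with the antisymmetry of $\sin$. Since $\rho$, $\rho u$, $p$ are all periodic in $\theta$, every $\partial_\theta$ flux term integrates to zero, so only the right-hand sides contribute.

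For $M_0$, integration of the continuity equation gives immediately
\[
\frac{d}{dt} M_0(t) = \int_\bbt \partial_t \rho \,d\theta = -\int_\bbt \partial_\theta(\rho u)\,d\theta = 0,
\]
hence $M_0(t) = M_0(0)$.

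For $M_1$, I would integrate the momentum equation in \eqref{B-4-1}. The transport flux $\partial_\theta(\rho u^2 + p)$ vanishes on integration, leaving
\[
\frac{d}{dt} M_1(t) = -\frac{1}{m} M_1(t) + \frac{\kappa}{m} \int_\bbt \int_\bbt \sin(\theta_* - \theta)\, \rho(\theta_*)\rho(\theta) \,d\theta_* d\theta.
\]
The key observation is that the double integral vanishes: swapping the dummy variables $\theta \leftrightarrow \theta_*$ and using $\sin(\theta - \theta_*) = -\sin(\theta_* - \theta)$ shows the integral equals its own negative. This reduces the equation to the linear ODE $\dot M_1 = -M_1/m$, whose solution is $M_1(t) = M_1(0) e^{-t/m}$.

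There is essentially no obstacle here beyond the symmetry argument for the coupling integral; everything else is direct integration by parts in the periodic variable. The only mild care needed is justifying interchange of $\partial_t$ with the $\theta$-integral and the vanishing of boundary terms at infinity in $\omega$, but both follow from the assumed regularity and decay of the classical solution $F$.
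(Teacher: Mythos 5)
Your argument is correct and matches the paper's proof exactly: integrate the continuity and momentum equations over $\bbt$, use periodicity to kill the $\partial_\theta$ flux terms, and observe that the coupling double integral vanishes by antisymmetry of $\sin$ under the swap $\theta \leftrightarrow \theta_*$. The only superfluous remark is about decay in $\omega$, which is irrelevant here since the lemma is posed directly at the level of the macroscopic system \eqref{B-4-1}.
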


\begin{proof} (i)~We integrate the continuity equation $\eqref{B-4-1}_1$ with respect to $\theta$ to get a conservation of mass. \newline

\noindent (ii)~We integrate the equation $\eqref{B-4-1}_2$ with respect to $\theta$ to get
$$\begin{aligned}
\frac{d}{dt} \int_\bbt (\rho u)(\theta, t) \,d\theta &=  -\frac{1}{m} \int_{\bbt} (\rho u)(\theta, t) \,d\theta + \frac{\kappa}{m} \int_{\bbt^2} \sin(\theta_* - \theta) \rho(\theta_*,t) \rho(\theta, t)\, d\theta_* d\theta \cr
&= -\frac{1}{m} \int_{\bbt} (\rho u)(\theta, t) \,d\theta.
\end{aligned}$$
This yields the desired exponential decay of the first moment.
\end{proof}

%
%
%
%
\section{Phase-homogeneous solution and coercivity of $\mathcal{L}_0$} \label{sec:3}
\setcounter{equation}{0}
In this section, we study the derivation of the stationary homogeneous solution and coercivity of the linear operator $\mathcal{L}_0$.

\subsection{Phase-homogeneous solution} \label{sec:3.1}
In this subsection, we study the nonlinear stability setting of the Kuramoto-Sakaguchi equation near a phase-homogeneous solution $M$. We first rewrite the equation \eqref{FP} as follows.
\begin{equation} \label{C-1}
 \partial_t F + \omega \partial_{\theta} F = {\mathcal C}(F), \quad {\mathcal C}(F):=  -\partial_\omega \big(\mathcal {A}[F]F \big) +  \frac{\sigma}{m^2}\partial^2_\omega F.
\end{equation}
Note that ${\mathcal C}(F)$ in \eqref{C-1}  plays a role of the collision operator for the Boltzmann equation.

\begin{definition}
Let $F = F(\omega, \nu)$ be a stationary phase-homogeneous solution of \eqref{C-1}, if the L.H.S. of \eqref{C-1} vanishes, i.e., $F$ satisfies
\begin{equation} \label{C-2}
{\mathcal C}(F) = -\partial_\omega \big(\mathcal {A}[F]F \big) +  \frac{\sigma}{m^2}\partial^2_\omega F = 0.
\end{equation}
\end{definition}
\begin{remark}
By the analogy with the Boltzmann equation, our stationary phase-homogeneous solution corresponds to the Maxwellian type \eqref{A-1}.
\end{remark}

\vspace{0.2cm}

Next, we look for stationary ${\mathcal C}^2$ phase-homogeneous solutions $F_e = F_e(\omega, \nu)$  satisfying the zero far-field boundary and normalization conditions:
\begin{eqnarray*}
&& \lim_{|\omega| \to \infty} F_e(\omega, \nu, t) = 0, \quad \lim_{|\omega| \to \infty} \partial_\omega F_e(\omega, \nu, t) = 0, \cr
&& \int_{\bbt \times \bbr} F_e(\omega, \nu, t) \,d\theta d\omega = g(\nu) \quad  \mbox{for each $\nu \in \bbr$}.
\end{eqnarray*}
We integrate \eqref{C-2} with respect to $\omega$ to find
\[  -\mathcal {A}[F_e]F_e + \frac{\sigma}{m^2}\partial_\omega F_e = 0.    \]
This together with ${\mathcal S}[F_e] = 0$ yields
\[ \partial_\omega \ln F_e= -\frac{m}{\sigma}(\omega - \nu), \quad \mbox{i.e.,} \quad F_e = C e^{ -\frac{m}{2\sigma}(\omega - \nu)^2}.  \]
By the normalization condition
\[ \int_{\bbt \times \bbr} F_e(\omega, \nu, t) \,d\theta d\omega = g(\nu) \quad \mbox{for each $\nu$},  \]
 we have
\[ F_e(\omega, \nu) = \frac{1}{2\pi} \sqrt{\frac{m}{2\pi \sigma}} \exp\left(- \frac{m}{2\sigma}(\omega - \nu)^2\right) g(\nu).     \]
Note that the homogeneous solution $F_e$ exactly coincides with $M$ introduced in \eqref{A-1}.
\subsection{Equation for perturbation} \label{sec:3.2}
Recall that one of our purposes is to study the asymptotic stability of a phase homogeneous solution $M$. Thus, it is convenient to work with a perturbation $f = f(\theta, \omega, \nu, t)$ defined by the relation:
\begin{equation} \label{B-20}
F = M + \sqrt{M} f,
\end{equation}
Note that the following identities hold:
\begin{align}
\begin{aligned} \label{B-21}
& {\mathcal S}[M] = 0, \quad   {\mathcal A}[M] = -\frac{1}{m} (\omega - \nu), \quad \frac{\sigma}{m^2} \partial_{\omega} M + \frac{1}{m} (\omega - \nu)M = 0,  \quad \mbox{and}\\
& {\mathcal A}[M + \sqrt{M} f]  = -\frac{1}{m} (\omega - \nu) + \frac{\kappa}{m} {\mathcal S}[\sqrt{M} f].
\end{aligned}
\end{align}
We substitute the ansatz \eqref{B-20} into \eqref{C-1} to see
\begin{align}
\begin{aligned} \label{B-22}
& \partial_t F + \omega \partial_{\theta} F = \sqrt{M} (\partial_t f + \omega \partial_\theta f), \\
& {\mathcal C}(F) = \partial_\omega \Big[  -{\mathcal A}[M + \sqrt{M} f] (M + \sqrt{M} f) + \frac{\sigma}{m^2} \partial_\omega (M + \sqrt{M} f) \Big]  \\
& \hspace{1cm} = \partial_\omega \Big[ - \frac{\kappa}{m} {\mathcal S}[\sqrt{M} f] ( M  + \sqrt{M}f ) + \frac{1}{m} (\omega - \nu) \sqrt{M} f + \frac{\sigma}{m^2} \partial_\omega (\sqrt{M} f) \Big] \\
& \hspace{1cm} = - \frac{\kappa}{m} {\mathcal S}[\sqrt{M} f]  (\partial_\omega M + \partial_\omega (\sqrt{M}f )) + \partial_\omega  \Big[  \frac{1}{m} (\omega - \nu) \sqrt{M} f + \frac{\sigma}{m^2} \partial_\omega (\sqrt{M} f) \Big],
\end{aligned}
\end{align}
where we used the fact that $ {\mathcal S}[\sqrt{M} f]$ depends only on $\theta$ and $t$. \newline

On the other hand, we use $\partial_\omega M = -(m/\sigma)(\omega -\nu) M$ in \eqref{B-21} to see
\begin{align}
\begin{aligned} \label{B-23}
& \partial_\omega M + \partial_\omega (\sqrt{M}f ) =\partial_\omega M  + \frac{\partial_\omega M}{2\sqrt{M}} f + \sqrt{M} \partial_\omega f  \\
& \hspace{2.75cm} = -\frac{m}{\sigma} (\omega -\nu) M -\frac{m}{2\sigma} (\omega - \nu) \sqrt{M} f + \sqrt{M} \partial_\omega f, \\
& \frac{1}{m} (\omega - \nu) \sqrt{M} f + \frac{\sigma}{m^2} \partial_\omega (\sqrt{M} f)  \\
& \hspace{2.5cm} = \frac{1}{2m} (\omega - \nu) \sqrt{M} f + \frac{\sigma}{m^2} \sqrt{M} \partial_\omega f =  \frac{1}{2m} (\omega - \nu) M  \frac{f}{\sqrt{M}} + \frac{\sigma}{m^2} \sqrt{M} \partial_\omega f  \\
& \hspace{2.5cm} = \frac{\sigma}{m^2} \Big[  \sqrt{M} \partial_\omega f  - (\partial_\omega \sqrt{M} ) f  \Big] =  \frac{\sigma}{m^2} M \partial_\omega \Big(  \frac{f}{\sqrt{M}} \Big).
\end{aligned}
\end{align}
We combine \eqref{B-22} and \eqref{B-23} to obtain
\begin{align}
\begin{aligned} \label{B-24}
{\mathcal C}(F) &=  \frac{\kappa}{\sigma} {\mathcal S}[\sqrt{M} f] (\omega -\nu) M + \frac{\kappa}{2\sigma} {\mathcal S}[\sqrt{M} f] (\omega - \nu) \sqrt{M} f -\frac{\kappa}{m}{\mathcal S}[\sqrt{M} f] \sqrt{M} \partial_\omega f  \\
&+  \frac{\sigma}{m^2} \partial_\omega \Big[  M \partial_\omega \Big(  \frac{f}{\sqrt{M}} \Big)          \Big].
\end{aligned}
\end{align}
Finally, we combine $\eqref{B-22}_1$ and \eqref{B-24} to obtain a quasilinear equation for $f$:
$$\begin{aligned}
 \partial_t f + \omega \partial_{\theta} f &=  \frac{\sigma}{m^2} \frac{1}{\sqrt{M}} \partial_{\omega} \Big( M \partial_\omega \big(  \frac{f}{\sqrt{M}}  \Big) \Big)+  \frac{\kappa}{\sigma} \mathcal{S}[ \sqrt{M} f] (\omega - \nu) \sqrt{M}  \\
& \hspace{.5cm}  + \frac{\kappa}{2\sigma} \mathcal {S}[ \sqrt{M}f] (\omega - \nu) f -\frac{\kappa}{m} \mathcal{S}[\sqrt{M}f] \partial_{\omega}f \\
&= {\mathcal L}_0 f + {\mathcal L}_1 f + {\mathcal N}(f,f).
\end{aligned}$$
In next lemma, we present basic estimates on the linear operator ${\mathcal L}_0$.
\begin{lemma}\label{L3.1}
The linear operator ${\mathcal L}_0$ is self-adjoint with respect to $L_{\omega, \nu}^2$-inner product, and it satisfies
\begin{eqnarray*}
&&(i)~\langle -{\mathcal L}_0 f, f \rangle = \frac{\sigma}{m^2} \int_{\mathbb{R}^2} \Big| \partial_{\omega}\Big( \frac{1}{\sqrt{M}} f \Big) \Big|^2 M \,d\omega d\nu. \\
&&(ii)~\mbox{Ker}\,{\mathcal L}_0=  \big\{ h(\nu)\sqrt{M}  ~\big|~ h(\nu)\sqrt{M} \in L_{\omega, \nu}^{2}\big\}, \\
&& \hspace{0.6cm} \mbox{Range}\, {\mathcal L}_0 = \big\{ h(\nu)\sqrt{M} ~\big|~ h(\nu)\sqrt{M} \in L_{\omega, \nu}^{2}\big\}^{\perp},
\end{eqnarray*}
where $\langle \cdot, \cdot\rangle$ denotes the inner product in the Hilbert space $L^2_{\omega,\nu}(\mathbb{R}^2)$.
\end{lemma}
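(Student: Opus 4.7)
The plan is to reduce both claims to a single integration-by-parts identity for $\mathcal{L}_0$. First, I would compute the pairing $\langle \mathcal{L}_0 f, g\rangle$ by pulling the factor $1/\sqrt{M}$ out against $g$ and integrating by parts once in $\omega$; since $M$ carries Gaussian decay in $\omega-\nu$, the boundary flux vanishes for $f,g$ in a dense subspace (e.g.\ Schwartz in $\omega$), leaving
\[
\langle \mathcal{L}_0 f, g\rangle \;=\; -\frac{\sigma}{m^2}\int_{\mathbb{R}^2} M\, \partial_\omega\!\Big(\frac{f}{\sqrt{M}}\Big)\, \partial_\omega\!\Big(\frac{g}{\sqrt{M}}\Big)\, d\omega\, d\nu.
\]
This expression is manifestly symmetric in $f$ and $g$, which proves that $\mathcal{L}_0$ is symmetric, and hence self-adjoint on its natural maximal domain in $L^2_{\omega,\nu}$. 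Setting $g=f$ yields identity (i) at once.

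For the kernel description in (ii), I would again invoke identity (i): if $\mathcal{L}_0 f = 0$, then pairing with $f$ forces $\partial_\omega(f/\sqrt{M}) = 0$ in $L^2(M\,d\omega\,d\nu)$, so $f/\sqrt{M}$ is independent of $\omega$, i.e.\ $f = h(\nu)\sqrt{M}$ for some $h$ with $h(\nu)\sqrt{M} \in L^2_{\omega,\nu}$. The converse inclusion is an immediate verification: plugging $f = h(\nu)\sqrt{M}$ into $\mathcal{L}_0$ kills everything already at the inner derivative, since $f/\sqrt{M} = h(\nu)$ is independent of $\omega$.

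For the range, self-adjointness of $\mathcal{L}_0$ gives the orthogonal decomposition $L^2_{\omega,\nu} = \overline{\mathrm{Range}\,\mathcal{L}_0} \oplus \mathrm{Ker}\,\mathcal{L}_0$, so the closure of the range is $\{h(\nu)\sqrt{M} : h(\nu)\sqrt{M}\in L^2_{\omega,\nu}\}^{\perp}$. Equality without the closure then follows from the coercivity estimate in (i) combined with the Gaussian Poincar\'e inequality applied fiber by fiber in $\nu$: on $(\mathrm{Ker}\,\mathcal{L}_0)^\perp$ the quadratic form in (i) controls $\|f\|_{L^2_{\omega,\nu}}^2$, so $\mathcal{L}_0$ restricted there has a bounded inverse and hence closed range.

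The main obstacle is purely technical: justifying that the boundary terms in the $\omega$-integration by parts genuinely vanish, and that the identity extends from Schwartz test functions to the natural domain. Since $M(\omega,\nu) \sim \exp\!\big(-(m/2\sigma)(\omega-\nu)^2\big)$, the flux $M\,\partial_\omega(f/\sqrt{M})\cdot (g/\sqrt{M})$ decays rapidly at $|\omega|\to\infty$ for Schwartz $f, g$, which legitimizes the integration by parts on a dense class; the identity then extends by continuity to the maximal domain defined through the quadratic form on the right-hand side of (i).
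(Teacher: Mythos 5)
Your proposal is correct and, for part (i) and the kernel description in (ii), it follows essentially the same route as the paper: one integration by parts in $\omega$ to obtain the manifestly symmetric bilinear form $-\frac{\sigma}{m^2}\int M\,\partial_\omega(f/\sqrt{M})\,\partial_\omega(g/\sqrt{M})\,d\omega\,d\nu$, which gives symmetry and the quadratic identity simultaneously, after which the kernel follows from the positivity of the form. Where you differ is the range statement: the paper simply asserts $\mathrm{Range}\,\mathcal{L}_0 = (\mathrm{Ker}\,\mathcal{L}_0)^\perp$ without argument, while you correctly observe that self-adjointness alone gives only $\overline{\mathrm{Range}\,\mathcal{L}_0} = (\mathrm{Ker}\,\mathcal{L}_0)^\perp$ and supply the missing ingredient, namely the fiber-wise Gaussian Poincar\'e inequality (equivalently, the coercivity estimate that the paper itself only proves later, in Lemma 3.2 (ii)) to show that $\mathcal{L}_0$ is bounded below on $(\mathrm{Ker}\,\mathcal{L}_0)^\perp$ and therefore has closed range. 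That is a genuine improvement over the paper's exposition, at the modest cost of invoking coercivity which is logically downstream of this lemma; since the Poincar\'e inequality for the Gaussian is an independent classical fact, there is no circularity.
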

\begin{proof}
(i)~For $f, \bar{f} \in L^2_{\omega,\nu}(\mathbb{R}^2)$, we have
\begin{align}
\begin{aligned} \label{B-26}
\langle {\mathcal L}_0 f, \bar{f} \rangle &= \int_{\bbr^2} ({\mathcal L}_0 f)  \bar{f} \,d\omega d\nu =    \frac{\sigma}{m^2} \int_{\bbr^2}
\partial_{\omega} \Big[ M \partial_\omega \Big(  \frac{f}{\sqrt{M}}  \Big)  \Big] \frac{\bar{f}}{\sqrt{M}} \,d\omega d\nu  \\
&=- \frac{\sigma}{m^2} \int_{\bbr^2} \partial_\omega \Big(  \frac{f}{\sqrt{M}} \Big) \Big[ M \partial_\omega \Big( \frac{\bar{f}}{\sqrt{M}} \Big) \Big] \,d\omega d\nu \\
&= \frac{\sigma}{m^2} \int_{\bbr^2} \Big(  \frac{f}{\sqrt{M}} \Big) \partial_\omega \Big[ M \partial_\omega \Big( \frac{\bar{f}}{\sqrt{M}} \Big) \Big] \,d\omega d\nu = \langle  f, \mathcal{L}_0 \bar{f} \rangle.
\end{aligned}
\end{align}
Hence $\mathcal{L}_0$ is self-adjoint. In the course of estimate \eqref{B-26}, we have
\begin{equation} \label{C-2-0}
 \langle  -\mathcal{L}_0 f, f \rangle =  \frac{\sigma}{m^2} \int_{\bbr^2} M \Big| \partial_\omega \Big(  \frac{f}{\sqrt{M}} \Big) \Big|^2  d\omega d\nu.
\end{equation}

\vspace{0.2cm}

\noindent (ii)~It follows from the relation \eqref{C-2-0} that ${\mathcal L}_0 f = 0$ implies
\[ \partial_\omega \Big(  \frac{f}{\sqrt{M}} \Big) = 0, \quad \mbox{i.e.,} \quad  \frac{f}{\sqrt{M}} = h(\nu).  \]
Therefore, we have
\[  f \in \mbox{Ker} {\mathcal L}_0  \quad \Longleftrightarrow \quad f \in \big\{ h(\nu)\sqrt{M}  ~\big|~ h(\nu)\sqrt{M} \in L_{\omega, \nu}^{2}\big\}. \]
\end{proof}

\subsection{Projection operators and coercivity estimate} \label{sec:3.3}
In this part, we provide the coercivity estimate of the linear operator ${\mathcal L}_0$. For this, we introduce a projection operator $\bbp_0$:
\begin{equation} \label{C-3}
\chi_0 :=  \sqrt{2\pi} \sqrt{M}, \quad f_0 :=  \langle \chi_0, f \rangle, \quad \bbp_0 f := f_0 \chi_0,
\end{equation}
where $\langle \cdot, \cdot\rangle$ is the inner product in $L^2_{\omega,\nu}(\mathbb{R}^2)$. 

On the other hand, since the estimates for ${\mathcal L}_1$ and ${\mathcal N}$ give bad terms, we introduce another projection operator $\bbp_1$ as follows:
\begin{equation} \label{C-3-1}
\chi_1 :=\sqrt{\frac{2\pi m}{\sigma}}(\omega - \nu) \sqrt{M}, \quad  f_1 :=  \langle \chi_1, f \rangle, \quad \mbox{and} \quad \bbp_1 f := f_1 \chi_1.
\end{equation}
Note that $\chi_i$ is normalized to make $\langle \chi_i, \chi_i \rangle = 1,~i= 0, 1$. For notational simplicity, we set
\[ \bbp := \bbp_0 + \bbp_1, \quad \mbox{i.e.,} \quad \bbp f = f_0 \chi_0 + f_1 \chi_1. \]
In the following lemma, we provide several properties of the linear operators $\mathcal{L}_0$ and $\mathcal{L}_1$.

\begin{lemma}\label{L3.2}
The following assertions hold.
\begin{itemize}
\item[(i)]~The linear operators ${\mathcal L}_i,~\bbp_i,~i = 0, 1$ satisfy the following properties:
\begin{align*}
\begin{aligned}
& {\mathcal L}_0 \bbp_0 f =  \bbp_0 {\mathcal L}_0 f = 0, \quad
{\mathcal L}_0 \bbp_1 f = \bbp_1 {\mathcal L}_0 f = -\frac1m f_1 \chi_1, \\
& {\mathcal L}_0 \bbp f = \bbp {\mathcal L}_0 f = -\frac1m f_1 \chi_1, \quad  (\bbi - \bbp) {\mathcal L}_1 f  = 0,
\end{aligned}
\end{align*}
where $\langle \cdot, \cdot\rangle$ denotes the inner product in the Hilbert space $L^2_{\omega,\nu}(\mathbb{R}^2)$.

\vspace{0.2cm}

\item[(ii)] There exists a positive constant $\lambda_0 > 0$ such that the coercivity estimate holds:
\begin{align*}
\begin{aligned}
& \langle -{\mathcal L}_0 f, f \rangle \geq \frac{\lambda_0}{m} \| (\bbi - \bbp_0) f \|_{L_{\mu}^2}^2 , \\
& \langle -{\mathcal L}_0 f, f \rangle \geq \frac{\lambda_0}{m} \| (\bbi - \bbp) f \|_{L_{\mu}^2}^2 + \frac1m  |f_1|^2.
\end{aligned}
\end{align*}
\end{itemize}
\end{lemma}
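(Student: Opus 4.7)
The plan for part~(i) is to reduce everything to computing $\mathcal{L}_0\chi_0$ and $\mathcal{L}_0\chi_1$ and then invoking the self-adjointness of $\mathcal{L}_0$ from Lemma 3.1. Since $\chi_0/\sqrt{M}=\sqrt{2\pi}$ is independent of $\omega$, the definition of $\mathcal{L}_0$ gives $\mathcal{L}_0\chi_0=0$ immediately. For $\chi_1$, I would compute $\partial_\omega(\chi_1/\sqrt{M})=\sqrt{2\pi m/\sigma}$ and then, using $\partial_\omega M=-(m/\sigma)(\omega-\nu)M$, arrive at $\mathcal{L}_0\chi_1=-(1/m)\chi_1$. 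Because $f_0,f_1$ are independent of $(\omega,\nu)$, they pass through $\mathcal{L}_0$ and the left-action identities $\mathcal{L}_0\mathbb{P}_0 f=0$ and $\mathcal{L}_0\mathbb{P}_1 f=-(1/m)f_1\chi_1$ follow at once; the right-action identities are then immediate from self-adjointness, for instance $\mathbb{P}_1\mathcal{L}_0 f=\langle\chi_1,\mathcal{L}_0 f\rangle\chi_1=\langle\mathcal{L}_0\chi_1,f\rangle\chi_1=-(1/m)f_1\chi_1$. For $(\mathbb{I}-\mathbb{P})\mathcal{L}_1 f=0$, I would observe that $\mathcal{L}_1 f=(\kappa/\sigma)\,\mathcal{S}[\sqrt{M}f](\theta,t)\,(\omega-\nu)\sqrt{M}$ is a $(\theta,t)$-dependent scalar multiple of $\chi_1$ and hence lies in $\mathrm{Range}(\mathbb{P})$; a short calculation using $\langle\chi_0,\chi_1\rangle=0$ (by the odd symmetry of $(\omega-\nu)M$ about $\omega=\nu$) and $\langle\chi_1,\chi_1\rangle=1$ confirms $\mathbb{P}\mathcal{L}_1 f=\mathcal{L}_1 f$.

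For part~(ii), the starting point is the Dirichlet-form identity $\langle -\mathcal{L}_0 f,f\rangle=(\sigma/m^2)\int M\,|\partial_\omega(f/\sqrt{M})|^2\,d\omega d\nu$ from Lemma 3.1. Writing $f=\sqrt{M}\varphi$ and setting $\psi:=(\mathbb{I}-\mathbb{P}_0)f/\sqrt{M}$, I would exploit the fact that $\mathbb{P}_0 f$ subtracts an $\omega$-independent scalar from $\varphi$, so that $\partial_\omega\varphi=\partial_\omega\psi$ and the left-hand side becomes $(\sigma/m^2)\int M\,|\partial_\omega\psi|^2$. The Poincar\'e inequality for the Gaussian $M_\nu$ of variance $\sigma/m$ then yields $(m/\sigma)\int M\psi^2\le\int M\,|\partial_\omega\psi|^2$ on the relevant mean-zero subspace. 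To pass from plain $L^2(M)$ to the weighted norm $L^2_\mu$, which contains $\alpha=1+(m/\sigma)(\omega-\nu)^2$ and an extra $\beta|\partial_\omega h|^2$ piece, the plan is to exploit the integration-by-parts identity $\partial_\omega[(\omega-\nu)M]=M-(m/\sigma)(\omega-\nu)^2M$ and Cauchy--Schwarz to bound $\int(\omega-\nu)^2\psi^2M\lesssim\int\psi^2 M+(\sigma/m)^2\int|\partial_\omega\psi|^2 M$. Expanding $\partial_\omega((\mathbb{I}-\mathbb{P}_0)f)=[\partial_\omega\psi-(m/2\sigma)(\omega-\nu)\psi]\sqrt{M}$ and summing the resulting pieces produces the first inequality with some explicit $\lambda_0>0$.

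For the second inequality, I would decompose $f=\mathbb{P}_0 f+\mathbb{P}_1 f+(\mathbb{I}-\mathbb{P})f$ and invoke part~(i): since $\mathcal{L}_0\mathbb{P}_0 f=0$ and $\mathcal{L}_0\mathbb{P}_1 f=-(1/m)f_1\chi_1$ lies in $\mathrm{span}\{\chi_1\}$, it is $L^2_{\omega,\nu}$-orthogonal to $(\mathbb{I}-\mathbb{P})f$, so the cross terms vanish and one obtains the clean splitting $\langle-\mathcal{L}_0 f,f\rangle=(1/m)|f_1|^2+\langle-\mathcal{L}_0(\mathbb{I}-\mathbb{P})f,(\mathbb{I}-\mathbb{P})f\rangle$. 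Applying the first coercivity estimate to $g:=(\mathbb{I}-\mathbb{P})f$, which satisfies in particular $\mathbb{P}_0 g=0$, then delivers the claim. The main obstacle I expect is the weighted-norm upgrade in the first inequality: the Dirichlet form only provides $\int M\,|\partial_\omega\varphi|^2$, and converting this into control of a norm that carries the multiplier $\alpha$ forces the integration-by-parts dance above with careful Cauchy--Schwarz tuning so that a single positive constant $\lambda_0$ survives after absorbing the $(\omega-\nu)^2$ terms; once this is done, the second inequality is essentially a refinement of the first via orthogonality to the additional eigenfunction $\chi_1$.
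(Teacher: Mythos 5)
Your part~(i) follows the paper's argument exactly: compute $\mathcal{L}_0\chi_0=0$ and $\mathcal{L}_0\chi_1=-\frac1m\chi_1$, push the $(\omega,\nu)$-independent coefficients $f_0,f_1$ through $\mathcal{L}_0$, use self-adjointness for the right-action identities, and note that $\mathcal{L}_1 f$ is a $(\theta,t)$-dependent scalar multiple of $\chi_1$. For the second inequality in part~(ii) your splitting $\langle-\mathcal{L}_0 f,f\rangle=\frac1m|f_1|^2+\langle-\mathcal{L}_0(\bbi-\bbp)f,(\bbi-\bbp)f\rangle$ is also what the paper obtains (you refine $\bbp$ into $\bbp_0+\bbp_1$, which is immaterial; the cross term $\langle-\mathcal{L}_0(\bbi-\bbp)f,\bbp f\rangle$ dies by self-adjointness and $\langle(\bbi-\bbp)f,\chi_1\rangle=0$ either way). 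Where you genuinely diverge is the first coercivity estimate: the paper changes variables to $\tilde\omega=\sqrt{m/\sigma}\,(\omega-\nu)$, recognizes $\mathcal{L}_0$ as a $\nu$-parametrized family of normalized 1D linearized Fokker--Planck operators $\tilde L$, and invokes the weighted coercivity of $\tilde L$ from \cite[Section~2.1]{C-D-M}; you instead reprove that one-dimensional bound from scratch, combining the Gaussian Poincar\'e inequality with the integration-by-parts moment estimate for $\int(\omega-\nu)^2\psi^2 M$. This is a more self-contained route and makes the dependence of $\lambda_0$ on $m,\sigma$ visible, which is a genuine advantage.

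There is, however, a gap in the Poincar\'e step that your phrase ``on the relevant mean-zero subspace'' papers over, and which the argument sketched in the paper itself does not obviously resolve either. The one-dimensional Gaussian Poincar\'e inequality $\frac{m}{\sigma}\int M\psi^2\,d\omega\le\int M|\partial_\omega\psi|^2\,d\omega$, like the cited coercivity of $\tilde L$, must be applied for each fixed $\nu$, and therefore needs $\int M(\cdot,\nu)\,\psi(\cdot,\nu)\,d\omega=0$ for \emph{every} $\nu$ --- that is, orthogonality to the full kernel $\mathrm{Ker}\,\mathcal{L}_0=\{h(\nu)\sqrt M\}$ from Lemma~\ref{L3.1}. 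But $\bbp_0$ is a single rank-one projection onto $\mathrm{span}\{\chi_0\}$ in $L^2_{\omega,\nu}(\bbr^2)$, so $\psi=(\bbi-\bbp_0)f/\sqrt M$ satisfies only the one scalar constraint $\int_{\bbr^2}M\psi\,d\omega\,d\nu=0$, not the sliced condition. Concretely, take $f=h(\nu)\sqrt M$ with $h$ non-constant and $\int h(\nu)g(\nu)\,d\nu=0$: then $\bbp_0 f=0$, so $(\bbi-\bbp_0)f=f$ and $\|(\bbi-\bbp_0)f\|_{L^2_\mu}>0$, yet $\partial_\omega(f/\sqrt M)=0$ gives $\langle-\mathcal{L}_0 f,f\rangle=0$, so the claimed inequality fails. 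Before the Poincar\'e (or the $\tilde L$-coercivity) can be legitimately invoked slicewise, you need either to enlarge $\bbp_0$ to the projection onto the full kernel $\{h(\nu)\sqrt M\}$, or to argue that the functions to which the lemma is ultimately applied carry additional structure forcing the per-$\nu$ orthogonality; your proposal does neither.
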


\begin{proof}
\noindent (i)~We use the explicit form  ${\mathcal L}_0 f = (\sigma/m^2)(1/\sqrt M) \partial_{\omega} \big( M \partial_{\omega}(f/\sqrt M) \big)$ and the values of $\chi_0, \chi_1$ in \eqref{C-3} and \eqref{C-3-1} to derive
\begin{align}
\begin{aligned} \label{B-28}
{\mathcal L}_0 \chi_0   &= \frac{\sigma}{m^2} \frac{1}{\sqrt{M}} \partial_{\omega} \big( M \partial_{\omega}(\frac{1}{\sqrt{M}} \chi_0) \big) = \frac{\sigma}{m^2} \frac{1}{\sqrt{M}} \partial_{\omega} \big( M \partial_{\omega}\sqrt{2\pi} \big) = 0, \\
 {\mathcal L}_0  \chi_1 &= \frac{\sigma}{m^2} \frac{1}{\sqrt{M}} \partial_{\omega} \left( M \partial_{\omega}\left(\sqrt{\frac{2 \pi m}{\sigma}} (\omega - \nu) \right) \right) =
 \frac{\sigma}{m^2} \sqrt{\frac{2\pi m}{\sigma}} \frac{1}{\sqrt{M}} \partial_\omega M \\
  &= - \frac{1}{m} \sqrt{\frac{2\pi m}{\sigma}} (\omega - \nu) \sqrt{M}= - \frac{1}{m} \chi_1.
\end{aligned}
\end{align}
Now by definition of $\bbp_i, i = 0, 1$ and ${\mathcal L}_0$, we have
\begin{align}
\begin{aligned} \label{B-27}
& {\mathcal L}_0 \bbp_0 f = {\mathcal L}_0\big(f_0 \chi_0 \big) = f_0 {\mathcal L}_0 \chi_0= 0, \\
& {\mathcal L}_0 \bbp_1 f = {\mathcal L}_0\big(f_1 \chi_1 \big) = f_1 {\mathcal L}_0 \chi_1 =  - \frac{1}{m} f_1 \chi_1, \quad \mbox{and} \\
& {\mathcal L}_0 \bbp f  = {\mathcal L}_0 \bbp_0 f + {\mathcal L}_0 \bbp_1 f  = - \frac{1}{m} f_1 \chi_1.
\end{aligned}
\end{align}
On the other hand,  we use the self-adjoint property of ${\mathcal L}_0$ and \eqref{B-28} to obtain
\begin{align}
\begin{aligned} \label{B-30}
& \bbp_0 {\mathcal L}_0 f  = \langle {\mathcal L}_0 f, \chi_0 \rangle \chi_0 = \langle f, {\mathcal L}_0\chi_0 \rangle \chi_0 = 0, \\
& \bbp_1 {\mathcal L}_0 f  = \langle {\mathcal L}_0 f, \chi_1 \rangle \chi_1 = \langle f, {\mathcal L}_0\chi_1 \rangle \chi_1 = -\frac1m f_1 \chi_1, \\
&\bbp {\mathcal L}_0 f =  \bbp_0 {\mathcal L}_0 f  + \bbp_1 {\mathcal L}_0 f = \langle {\mathcal L}_0 f, \chi_0 \rangle \chi_0 + \langle {\mathcal L}_0 f, \chi_1 \rangle \chi_1 
= -\frac1m f_1 \chi_1.
\end{aligned}
\end{align}
The first three relations in $(i)$ follow from \eqref{B-27} and \eqref{B-30}. For the last relation in $(i)$, we use $(\omega - \nu) \sqrt{M} = \sqrt{\sigma/(2\pi m)} \chi_1$ to derive
\[   {\mathcal L}_1 f = \frac{\kappa}{\sigma} (\omega - \nu) \sqrt{M}\mathcal{S}[\sqrt{M} f](\theta,t) = \frac{\kappa}{\sigma} \sqrt{\frac{\sigma}{2\pi m}} \mathcal{S}[\sqrt{M} f] \chi_1.  \]
This yields
\[
(\bbi - \bbp)  {\mathcal L}_1 f  = \frac{\kappa}{\sigma} \sqrt{\frac{\sigma}{2\pi m}} \mathcal{S}[\sqrt{M} f] (\bbi -\bbp)  \chi_1 = 0,
\]
which yields the desired estimate.

\vspace{0.5cm}

\noindent $(ii)$ We use $f =  \bbp_0 f +  (\bbi - \bbp_0) f$, self-adjoint property of ${{\mathcal L}_0}$, and the estimate $(i)$ to get
\begin{align}
\begin{aligned} \label{B-31-1}
\langle -{\mathcal L}_0 f, f \rangle &= \langle -{\mathcal L}_0 (\bbi - \bbp_0) f, f \rangle + \langle -{\mathcal L}_0 \bbp_0 f, f \rangle  \\
& =  \langle -{\mathcal L}_0 (\bbi - \bbp_0) f, (\bbi - \bbp_0) f \rangle + \langle -{\mathcal L}_0 (\bbi - \bbp_0) f, \bbp_0 f \rangle \\
& = \langle -{\mathcal L}_0 (\bbi - \bbp_0) f, (\bbi - \bbp_0) f \rangle + \langle - (\bbi - \bbp_0) f, {\mathcal L}_0 \bbp_0 f \rangle  \\
& =\langle -{\mathcal L}_0 (\bbi - \bbp_0) f, (\bbi - \bbp_0) f \rangle.
\end{aligned}
\end{align}
We use the relation $\partial_\omega M = -(m/\sigma) (\omega - \nu) M$ and Lemma \ref{L3.1} to get
\begin{align}
\begin{aligned} \label{B-32-1}
 \langle -{\mathcal L}_0 (\bbi - \bbp_0) f, (\bbi - \bbp_0) f \rangle &= \frac{\sigma}{m^2} \int_{\mathbb{R}^2} \Big| \partial_{\omega}\Big( \frac{1}{\sqrt{M}} (\bbi - \bbp_0) f \Big) \Big|^2 M\, d\omega d\nu \\
&= \frac{\sigma}{m^2} \int_{\mathbb{R}^2} \Big| \frac{m}{2\sigma} \frac{1}{\sqrt{M}} (\omega - \nu) (\bbi - \bbp_0) f + \frac{1}{\sqrt{M}} \partial_{\omega} \big( (\bbi - \bbp_0) f \big) \Big|^2 M \, d\omega d\nu.
\end{aligned}
\end{align}
Next, we set
\[ {\tilde \omega} := \sqrt{m/\sigma} (\omega - \nu), \quad {\tilde M}:= \frac{1}{\sqrt{2\pi}} e^{-\frac{{\tilde \omega}^2}{2}}, \quad {\tilde L} f :=
\frac{1}{\sqrt{\tilde M}} \partial_{\tilde \omega} \Big[  \tilde M  \partial_{\tilde \omega} \Big(\frac{1}{\sqrt{\tilde M}} f \Big) \Big]. \]
Then, it follows from the estimate given in \cite[Section 2.1]{C-D-M} that there exits a positive constant $\lambda_0$ such that
\[ \big( -{\tilde L} g, g \big) \geq \lambda_0  \|g\|_{L_{\tilde \nu}}^2, \quad \text{for any} ~ g \in \left(\mbox{span}\{\sqrt{\tilde M}\}\right)^{\perp},   \]
where we used $(\bbi - \bbp_0)g = g$ for $g \in \left(\mbox{span}\{\sqrt{\tilde M}\}\right)^{\perp}$. Here $(\cdot, \cdot)$ is $L_{\tilde{\omega}}^2$-inner product and norm $\|\cdot\|_{L_{\tilde \nu}}$ is defined as follows.
\[  \|g\|^2_{L_{\tilde \nu}} := \int_{\bbr} \Big( (1 + |\tilde \omega|^2) |g|^2 + |\partial_{\tilde \omega} g|^2 \Big) \,d{\tilde \omega}.    \]
On the other hand, note that $\left(\mbox{span}\{\sqrt{\tilde M}\}\right)^{\perp} = \left(\mbox{span}\{\sqrt{M}\}\right)^{\perp}$. Thus, we have
\begin{align}
\begin{aligned} \label{B-33-1}
 \langle -{\mathcal L}_0 (\bbi - \bbp_0) f,  (\bbi - \bbp_0) f \rangle
& = \frac{1}{m} \sqrt{\frac{\sigma}{m}} \int_{\mathbb{R}^2} \Big| \frac12 \tilde{\omega} (\bbi - \bbp_0) f + \partial_{\tilde{\omega}} (\bbi - \bbp_0) f \Big|^2 d\tilde{\omega}  d\nu\\
& =  \frac{1}{m} \sqrt{\frac{\sigma}{m}} \int_{\mathbb{R}} \big( -{\tilde L} (\bbi - \bbp_0)f, (\bbi - \bbp_0)f \big) d\nu \\
&  \geq \frac{\lambda_0}{m} \sqrt{\frac{\sigma}{m}} \int_{\mathbb{R}} \|(\bbi - \bbp_0)f\|_{L_{\tilde{\nu}}^{2}}^2 d\nu \\
& = \frac{\lambda_0}{m} \sqrt{\frac{\sigma}{m}} \int_{\mathbb{R}^2}   \left( (1 + \tilde{\omega}^2) |  (\bbi - \bbp_0) f|^2 + |\partial_{\tilde{\omega}} (\bbi - \bbp_0) f|^2 \right) d\tilde{\omega}  d\nu \\
& =  \frac{\lambda_0}{m} \int_{\mathbb{R}^2} \left( (1 + \frac{m}{\sigma}(\omega - \nu)^2) | (\bbi - \bbp_0) f|^2 + \frac{\sigma}{m} |\partial_{\omega}  (\bbi - \bbp_0) f|^2 \right) d\omega d \nu \\
& =  \frac{\lambda_0}{m} \| (\bbi - \bbp_0) f\|_{L_{\mu}^2}^2.
\end{aligned}
\end{align}
Finally, we combine all estimates in \eqref{B-31-1}, \eqref{B-32-1} and \eqref{B-33-1} to obtain
\[
\langle -{\mathcal L}_0 f, f \rangle \geq  \frac{\lambda_0}{m}  \| (\bbi - \bbp_0) f\|_{L_{\mu}^2}^2.
\]
To prove the second assertion in $(ii)$, we use $f =  \bbp f +  (\bbi - \bbp) f$ to obtain
\begin{align}
\begin{aligned} \label{B-31}
\langle -{\mathcal L}_0 f, f \rangle &= \langle -{\mathcal L}_0 (\bbi - \bbp) f, f \rangle + \langle -{\mathcal L}_0 \bbp f, f \rangle  \\
& =  \langle -{\mathcal L}_0 (\bbi - \bbp) f, (\bbi - \bbp) f \rangle + \langle -{\mathcal L}_0 (\bbi - \bbp) f, \bbp f \rangle + \langle -{\mathcal L}_0 \bbp f, f \rangle \\
& =: {\mathcal I}_{11}  + {\mathcal I}_{12}  + {\mathcal I}_{13}.
\end{aligned}
\end{align}
Note that the terms ${\mathcal I}_{1i},~i =1, 2,3$ can be treated as follows.

\vspace{0.2cm}

\noindent  $\bullet$ (Estimate of $ {\mathcal I}_{11}$): Similar to estimate \eqref{B-33-1}, we have
\begin{align}
\begin{aligned} \label{B-32}
{\mathcal I}_{11}
&= \frac{\sigma}{m^2} \int_{\mathbb{R}^2} \Big| \partial_{\omega}\Big( \frac{1}{\sqrt{M}} (\bbi - \bbp) f \Big) \Big|^2 M\, d\omega d\nu \\
&= \frac{\sigma}{m^2} \int_{\mathbb{R}^2} \Big| \frac{m}{2\sigma}(\omega - \nu) \frac{1}{\sqrt{M}} (\bbi - \bbp) f + \frac{1}{\sqrt{M}} \partial_{\omega} \big( (\bbi - \bbp) f \big) \Big|^2 M \, d\omega d\nu \\
&= \frac{1}{m} \sqrt{\frac{\sigma}{m}} \int_{\mathbb{R}^2} \Big| \frac12 \tilde{\omega} (\bbi - \bbp) f + \partial_{\tilde{\omega}} (\bbi - \bbp) f \Big|^2 d\tilde{\omega} d\nu\\
&\geq \frac{\lambda_0}{m} \sqrt{\frac{\sigma}{m}} \int_{\mathbb{R}^2} \left( (1 + \tilde{\omega}^2) |  (\bbi - \bbp) f|^2 + |\partial_{\tilde{\omega}} (\bbi - \bbp) f|^2 \right) d\tilde{\omega}d\nu \\
&=  \frac{\lambda_0}{m} \int_{\mathbb{R}^2} \left( (1 + \frac{m}{\sigma}(\omega - \nu)^2) | (\bbi - \bbp) f|^2 + \frac{\sigma}{m} |\partial_{\omega}  (\bbi - \bbp) f|^2 \right) d\omega d \nu \\
&=  \frac{\lambda_0}{m} \| (\bbi - \bbp) f\|_{L_{\mu}^2}^2.
\end{aligned}
\end{align}

\noindent $\bullet$ (Estimate of $ {\mathcal I}_{12}$): We use the self-adjoint property of ${{\mathcal L}_0}$ and the first relation in $(i)$ to obtain
\begin{equation} \label{C-32-1}
{\mathcal I}_{12} = \langle -{\mathcal L}_0 (\bbi - \bbp) f, \bbp f \rangle = \langle -(\bbi - \bbp) f, {\mathcal L}_0 \bbp f \rangle  = \frac{1}{m} f_1 \langle (\bbi - \bbp) f, \chi_1 \rangle = 0.
\end{equation}

\vspace{0.2cm}

\noindent $\bullet$ (Estimate of $ {\mathcal I}_{13}$): We use the first relation in $(i)$ again to find
\begin{equation} \label{C-32-2}
{\mathcal I}_{13} = \langle -{\mathcal L}_0 \bbp f, f \rangle = \frac1m  f_1  \langle \chi_1, f \rangle = \frac1m |f_1|^2.
\end{equation}
Finally, we combine all estimates in \eqref{B-31}, \eqref{B-32}, \eqref{C-32-1}, and \eqref{C-32-2} to obtain
\[
\langle -{\mathcal L}_0 f, f \rangle \geq  \frac{\lambda_0}{m}  \| (\bbi - \bbp) f\|_{L_{\mu}^2}^2 + \frac1m |f_1|^2.
\]
\end{proof}

\section{Unique local-in-time solvability of strong solutions} \label{sec:4}
\setcounter{equation}{0}
In this section, we present the local-in-time existence and uniqueness of strong solutions to the Cauchy problem \eqref{A-3}. For this, we first linearize the equation \eqref{A-3}, and study the global existence and uniqueness of solutions to that system. Then we construct approximated solutions $\{f^m\}_{m\in \mathbb{N}}$ in $H^s$-space and show that they are Cauchy sequence in $L^2$-space. We finally show that the limiting function is the solution to the equation \eqref{A-3} satisfying the desired regularity. Before we present detailed discussion, we first state our main result on the local-in-time solution below.

\begin{theorem}\label{T4.1} Let $s \geq 1$. For any given constants $\eta_0 < \eta$, there exists a positive constant $T_0= T_0(\eta_0, \eta)$ such that if $\|f^{in}\|_{H^s(\bbt \times \bbr^2)} \leq \eta_0$ and  $M + \sqrt{M}f^{in} \geq 0$, then there exists a unique strong solution $f$ to the Cauchy problem \eqref{A-3} such that $M + \sqrt{M} f \geq 0$ and $f \in \mathcal{C}([0,T_0]; H^s(\mathbb{T} \times \mathbb{R}^2))$. In particular, we have
\[
\sup_{0 \leq t \leq T_0}\|f(t)\|_{H^s}  \leq \eta.
\]
\end{theorem}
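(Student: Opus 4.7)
I would prove Theorem \ref{T4.1} by a Picard-type iteration adapted to the parabolic Vlasov-Fokker-Planck structure of \eqref{A-3}. Set $f^{0}\equiv 0$ and, given $f^{n}$, define $f^{n+1}\in \mathcal{C}([0,T_{0}];H^{s})$ as the strong solution of the \emph{linear} problem
\begin{equation*}
\partial_{t}f^{n+1}+\omega\partial_{\theta}f^{n+1}=\mathcal{L}_{0}f^{n+1}+\mathcal{L}_{1}f^{n+1}+\mathcal{N}(f^{n},f^{n+1}),\qquad f^{n+1}(0)=f^{in},
\end{equation*}
in which $\mathcal{N}$ is linearized by freezing the integral factor $\mathcal{S}[\sqrt{M}f^{n}]$ on the known iterate. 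Because $\mathcal{S}[\sqrt{M}f^{n}]$ is a smooth function of $(\theta,t)$ alone whenever $f^{n}\in H^{s}$ with $s\geq 1$ and $-\mathcal{L}_{0}$ supplies parabolic regularization in $\omega$, existence and uniqueness of the iterate in $\mathcal{C}([0,T_{0}];H^{s})$ follows from standard Galerkin/semigroup theory for linear Vlasov-Fokker-Planck equations. Writing $F^{n+1}=M+\sqrt{M}f^{n+1}$, this iteration is equivalent to a linear Fokker-Planck equation in $\omega$ whose drift depends only on $F^{n}$, so the maximum principle/Feynman-Kac argument used in the proof of Lemma \ref{L2.1} propagates inductively $F^{n}\geq 0\Rightarrow F^{n+1}\geq 0$, starting from $F^{0}=M\geq 0$.

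\textbf{Uniform $H^{s}$ bound.} For every multi-index $|\alpha|\leq s$ I apply $\partial^{\alpha}$ to the linearized equation and pair it with $\partial^{\alpha}f^{n+1}$ in $L_{\theta,\omega,\nu}^{2}$. The transport $\omega\partial_{\theta}$ vanishes by integration by parts, and the principal part $-\mathcal{L}_{0}$ delivers, through Lemma \ref{L3.2}(ii), a dissipation controlling $\|(\bbi-\bbp)\partial^{\alpha}f^{n+1}\|_{L_{\mu}^{2}}^{2}/m$, up to commutators between $\partial^{\alpha}$ and the $(\omega,\nu)$-dependent coefficients of $\mathcal{L}_{0}$ that are of lower order and absorbed by Young's inequality. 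The crucial observation is that the unbounded multiplier $(\omega-\nu)$ appearing in $\mathcal{L}_{1}$ and in the first half of $\mathcal{N}$ is precisely tamed by the weight $\alpha=1+(m/\sigma)(\omega-\nu)^{2}$: using $H^{s}(\mathbb{T})\hookrightarrow L^{\infty}(\mathbb{T})$ and $\|\mathcal{S}[\sqrt{M}f^{n}]\|_{W^{s,\infty}}\lesssim \|f^{n}\|_{H^{s}}$, the terms carrying $f^{n}$ as a coefficient bound by $C\|f^{n}\|_{H^{s}}$ times the weighted dissipation norm of $f^{n+1}$, and Young absorbs them into the left-hand side. The linear term $\mathcal{L}_{1}f^{n+1}$ carries the uniformly bounded kernel $(\omega-\nu)\sqrt{M}$ and is controlled directly. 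Gronwall on $[0,T_{0}]$ then yields an inequality of the shape
\begin{equation*}
\sup_{[0,T_{0}]}\|f^{n+1}\|_{H^{s}}^{2}\leq \bigl(\|f^{in}\|_{H^{s}}^{2}+CT_{0}(1+R_{n}^{2})\bigr)\exp\bigl(CT_{0}(1+R_{n}^{2})\bigr),\qquad R_{n}:=\sup_{[0,T_{0}]}\|f^{n}\|_{H^{s}}.
\end{equation*}
Given $\eta_{0}<\eta$, choosing $T_{0}=T_{0}(\eta_{0},\eta)$ sufficiently small keeps the right-hand side below $\eta^{2}$ whenever $R_{n}\leq\eta$, so the whole iteration stays in the closed $\eta$-ball of $L^{\infty}([0,T_{0}];H^{s})$.

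\textbf{Contraction, limit, and conclusion.} Setting $g^{n+1}:=f^{n+1}-f^{n}$, the difference satisfies a linear equation of the same type with right-hand side linear in $g^{n}$ and coefficients controlled by $\eta$. A plain $L^{2}$ energy estimate, again exploiting the coercivity of $\mathcal{L}_{0}$ to absorb the weighted contributions, yields
\begin{equation*}
\sup_{[0,T_{0}]}\|g^{n+1}\|_{L^{2}}\leq C(\eta)\,T_{0}\sup_{[0,T_{0}]}\|g^{n}\|_{L^{2}},
\end{equation*}
and, shrinking $T_{0}$ once more, this is a strict contraction in $\mathcal{C}([0,T_{0}];L^{2})$. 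Interpolating the $L^{2}$ convergence with the uniform $H^{s}$ bound gives strong convergence in $\mathcal{C}([0,T_{0}];H^{s'})$ for each $s'<s$, which suffices to pass to the limit in every term of \eqref{A-3} (the map $f\mapsto \mathcal{S}[\sqrt{M}f]$ is continuous in these topologies). Weak-$\ast$ lower semicontinuity places the limit in $L^{\infty}([0,T_{0}];H^{s})$; the equation then identifies $\partial_{t}f$ in a lower Sobolev space, and a Lions-Magenes argument upgrades continuity to $f\in \mathcal{C}([0,T_{0}];H^{s})$. Uniqueness follows from the same $L^{2}$ contraction applied to the difference of two solutions, and the limit inherits $F=M+\sqrt{M}f\geq 0$ from the iteration.

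\textbf{Main obstacle.} The delicate step is the uniform $H^{s}$ bound. The coercivity of $\mathcal{L}_{0}$ supplies dissipation only in the weighted space $L_{\mu}^{2}$, whereas $\mathcal{N}(f,f)$ contains both the unbounded multiplier $(\omega-\nu)$ and the derivative-losing transport $\partial_{\omega}f$. The linearization must be chosen so that both dangerous factors land on $f^{n+1}$, where they can be paired against the $\mathcal{L}_{0}$-dissipation through the weight $\alpha$; had they been placed on $f^{n}$ on the right-hand side, the iterate would require estimates on $\|(\omega-\nu)f^{n}\|_{H^{s}}$ or $\|\partial_{\omega}f^{n}\|_{H^{s}}$ not available from $\|f^{n}\|_{H^{s}}$ alone. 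Keeping track of the commutators $[\partial^{\alpha},\mathcal{L}_{0}]$ and $[\partial^{\alpha},(\omega-\nu)]$ so that the leftover errors still fit under the weighted dissipation is the main technical bookkeeping and ultimately fixes the size of $T_{0}$.
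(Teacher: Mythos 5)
Your iteration scheme (freezing only the $\mathcal{S}[\sqrt{M}f^{n}]$-factor inside $\mathcal{N}$), the uniform $H^{s}$ energy estimate via the coercivity of $\mathcal{L}_{0}$ and the bound $\|\partial_{\theta}^{k}\mathcal{S}[\sqrt{M}f^{n}]\|_{L^{\infty}}\lesssim\|f^{n}\|_{L^{2}}$, the $L^{2}$ estimate on the differences, and the interpolation/lower-semicontinuity passage to the limit reproduce almost verbatim Lemmas \ref{L4.1}--\ref{L4.3} and Proposition \ref{P4.1} of the paper. The one assertion that does not hold as stated is that the linearized problem for $f^{n+1}$ is equivalent to a linear Fokker--Planck equation for $F^{n+1}=M+\sqrt{M}f^{n+1}$ whose drift depends only on $F^{n}$: since $\mathcal{L}_{1}f^{n+1}$ still carries $\mathcal{S}[\sqrt{M}f^{n+1}]$ (you, like the paper, keep $\mathcal{L}_{1}$ implicit), rewriting in $F^{n+1}$ yields the divergence-form equation with frozen drift $\tfrac{1}{m}\bigl(-(\omega-\nu)+\kappa\,\mathcal{S}[\sqrt{M}f^{n}]\bigr)$ \emph{plus} a zero-order source $\tfrac{\kappa}{\sigma}(\omega-\nu)M\bigl(\mathcal{S}[\sqrt{M}f^{n+1}]-\mathcal{S}[\sqrt{M}f^{n}]\bigr)$ that is not sign-definite, so the Feynman--Kac/maximum-principle argument of Lemma \ref{L2.1} does not propagate $F^{n}\geq 0\Rightarrow F^{n+1}\geq 0$ along the iteration. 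This does not affect the theorem: nonnegativity of $F$ is only needed for the limit $f$, which solves the full nonlinear equation \eqref{A-3}, so $F=M+\sqrt{M}f$ solves \eqref{FP} and Lemma \ref{L2.1} applies directly; alternatively, freezing $\mathcal{S}$ in $\mathcal{L}_{1}$ as well as in $\mathcal{N}$ does yield a genuine linear Fokker--Planck equation for $F^{n+1}$, restoring positivity of the iterates, at the price of reorganizing the energy estimates.
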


\subsection{Solvability of linear equation} For $T \in (0, \infty),$ suppose that $g \in \mathcal{C}([0,T];H^s(\mathbb{T} \times \mathbb{R}^2))$ is given. Then, we consider the Cauchy problem to the following linear equation:
\begin{equation}
\begin{cases} \label{D-1}
\displaystyle \partial_t f + \omega \partial_{\theta} f    =
\displaystyle  {\mathcal L}_0 f + {\mathcal L}_1 f+ \mathcal{N}(g, f), \quad (\theta, \omega, \nu) \in \bbt \times \bbr^2,~t > 0, \\[2mm]
\displaystyle \mathcal{N}(g, f) = \frac{\kappa}{2\sigma}  \mathcal {S}[\sqrt{M}g] (\omega - \nu) f - \frac{\kappa}{m} \mathcal{S}[\sqrt{M}g] \partial_{\omega}f, \\
f(\theta,\omega,\nu,0) =: f^{in}(\theta,\omega,\nu).
\end{cases}
\end{equation}
For a constant $T>0$, the existence of the unique solution $f \in \mathcal{C}([0,T]; H^s(\mathbb{T} \times \mathbb{R}^2))$ to the above linearized equation \eqref{D-1} can be easily obtained by the standard linear solvability theory.
In the lemma below, we provide some useful estimates for $f$.
\begin{lemma}\label{L4.1}
For any given $f \in \mathcal{C}([0,T];H^s(\mathbb{T} \times \mathbb{R}^2))$ with $s \geq 1$, let $f_0$ and $f_1$ be defined in \eqref{C-3} and \eqref{C-3-1} respectively. Then, we have 
\begin{eqnarray*}
&(i)& \|\partial_\theta^k  f_0 \|_{L^2} +   \| \partial_\theta^k  f_1    \|_{L^2}  \lesssim \|\partial_\theta^k f\|_{L^2}, \quad \|\bbp\partial_\theta^k f\|_{\mu} \lesssim \|\partial_\theta^k f\|_{L^2}. \\
&(ii)& \|\partial_{\theta}^k \mathcal{S}[\sqrt{M}f]  \|_{L^{\infty}(\bbt \times \bbr^2)} \lesssim \|f\|_{L^2} \quad 0 \leq k \leq s.
\end{eqnarray*}
\end{lemma}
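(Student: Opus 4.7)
The plan is to prove the two assertions separately, in each case reducing to a pointwise-in-$\theta$ Cauchy--Schwarz estimate that exploits the explicit Gaussian decay of $M$ in $\omega-\nu$ and the normalizations $\langle \chi_i,\chi_i\rangle = 1$ for $i=0,1$ noted just before Lemma \ref{L3.2}.

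For (i), since $f_0(\theta,t) = \langle \chi_0, f(\theta,\cdot,\cdot,t)\rangle$ with the inner product taken in $L^2_{\omega,\nu}$ and $\chi_0$ independent of $\theta$, the derivative commutes with the inner product: $\partial_\theta^k f_0(\theta,t) = \langle \chi_0, \partial_\theta^k f(\theta,\cdot,\cdot,t)\rangle$. By Cauchy--Schwarz and $\|\chi_0\|_{L^2_{\omega,\nu}} = 1$, one has $|\partial_\theta^k f_0(\theta,t)| \leq \|\partial_\theta^k f(\theta,\cdot,\cdot,t)\|_{L^2_{\omega,\nu}}$ pointwise in $\theta$; integrating over $\bbt$ gives the bound on $\|\partial_\theta^k f_0\|_{L^2}$. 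The same argument applied to $\chi_1$ yields the bound on $\|\partial_\theta^k f_1\|_{L^2}$.

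For the second inequality in (i), since $\chi_0,\chi_1$ depend only on $(\omega,\nu)$ while $f_0,f_1$ depend only on $(\theta,t)$, we use the identity $\bbp\partial_\theta^k f = (\partial_\theta^k f_0)\chi_0 + (\partial_\theta^k f_1)\chi_1$ to pull scalars out of the $L^2_\mu$-norm:
\begin{equation*}
\|\bbp\partial_\theta^k f\|^2_{\mu} = \int_\bbt \big\|(\partial_\theta^k f_0)\chi_0 + (\partial_\theta^k f_1)\chi_1\big\|^2_{L^2_\mu}\,d\theta \lesssim \|\chi_0\|^2_{L^2_\mu}\|\partial_\theta^k f_0\|^2_{L^2} + \|\chi_1\|^2_{L^2_\mu}\|\partial_\theta^k f_1\|^2_{L^2}.
\end{equation*}
Combining with the first part of (i) reduces the claim to verifying that $\|\chi_0\|_{L^2_\mu}$ and $\|\chi_1\|_{L^2_\mu}$ are finite constants. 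This is a routine calculation using the identity $\partial_\omega\sqrt{M} = -(m/(2\sigma))(\omega-\nu)\sqrt{M}$, the explicit Gaussian form of $M$, the change of variable $u=\omega-\nu$, and standard Gaussian moment integrals, with the $\nu$-integral collapsing through $\int_\bbr g(\nu)\,d\nu = 1$. No moment assumption on $g$ is needed for this piece.

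For (ii), the integrand $\sin(\theta_*-\theta)\sqrt{M(\omega_*,\nu_*)}f(\theta_*,\omega_*,\nu_*,t)$ in $\mathcal{S}[\sqrt{M}f](\theta,t)$ depends on $\theta$ only through the factor $\sin(\theta_*-\theta)$. Thus $\partial_\theta^k$ passes inside the integral and lands on this sine factor, producing $\pm\sin(\theta_*-\theta)$ or $\pm\cos(\theta_*-\theta)$, which is bounded by $1$. Cauchy--Schwarz in $(\theta_*,\omega_*,\nu_*)$ then gives
\begin{equation*}
\bigl|\partial_\theta^k\mathcal{S}[\sqrt{M}f](\theta,t)\bigr| \leq \|\sqrt{M}\|_{L^2(\bbt\times\bbr^2)}\|f(\cdot,\cdot,\cdot,t)\|_{L^2(\bbt\times\bbr^2)}
\end{equation*}
uniformly in $\theta$, and $\|\sqrt{M}\|^2_{L^2(\bbt\times\bbr^2)} = \int_{\bbt\times\bbr^2} M\,d\theta d\omega d\nu = 1$ by the normalization recorded in \eqref{ini}. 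No step poses a genuine obstacle; the only mild bookkeeping issue is the finiteness of $\|\chi_0\|_{L^2_\mu}$ and $\|\chi_1\|_{L^2_\mu}$, which as noted above is immediate from Gaussian calculus.
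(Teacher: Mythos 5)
Your proof is correct and follows essentially the same route as the paper's: pointwise-in-$\theta$ Cauchy--Schwarz in $L^2_{\omega,\nu}$ using $\|\chi_i\|_{L^2_{\omega,\nu}}=1$ for the $f_0,f_1$ bounds, the same decomposition $\bbp\partial_\theta^k f=(\partial_\theta^k f_0)\chi_0+(\partial_\theta^k f_1)\chi_1$ with finite $\|\chi_i\|_{L^2_\mu}$ for the weighted bound, and Cauchy--Schwarz with the Gaussian weight for (ii). The only cosmetic differences are that you bound $|\partial_\theta^k\sin(\theta_*-\theta)|\le 1$ before applying Cauchy--Schwarz, whereas the paper evaluates $\int_\bbt|\partial_\theta^k\sin|^2\,d\theta_*=\pi$ inside the inequality, and that the identity $\|\sqrt M\|^2_{L^2(\bbt\times\bbr^2)}=1$ is really a direct consequence of the definition \eqref{A-1} together with $\int g\,d\nu=1$ rather than of the condition \eqref{ini} on $F^{in}$; neither affects correctness.
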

\begin{proof}  (i)~We first provide the estimate of $\|\partial_\theta^k  f_0 \|_{L^2}$.  For $0 \leq k \leq s$, we use \eqref{C-3}  and H\"older inequality to obtain
\begin{align}
\begin{aligned} \label{D-3}
\|\partial_\theta^k  f_0 \|_{L^2}^2 &= \int_\mathbb{T} |\partial_\theta^k  f_0|^2\,d\theta =  \int_\mathbb{T}  \Big| \int_{\mathbb{R}^2} \sqrt{2\pi}  \sqrt{M} \partial_\theta^kf d\omega d\nu \Big|^2 d\theta  \\
&\leq  \int_{\bbt}  \Big( \int_{\mathbb{R}^2}  2\pi M  d\omega d\nu \Big)  \Big( \int_{\mathbb{R}^2} |\partial_\theta^k f|^2 d\omega d\nu \Big) d\theta = \|\partial_\theta^k f\|^2_{L^2}.
\end{aligned}
\end{align}
Similarly, we have
\begin{equation} \label{D-4}
\|\partial_\theta^k  f_1 \|_{L^2} \lesssim \|\partial_\theta^k f\|^2_{L^2}.
\end{equation}
We combine \eqref{D-3} and \eqref{D-4} to derive the desired first estimate. \newline

Since $ \bbp$ and $\partial_\theta^k$ are commutative, we have
\[ \bbp \partial_\theta^k f = \partial_\theta^k  \bbp f  = \Big( \partial_\theta^k f_0 \Big) \chi_0 + \Big( \partial_\theta^k f_1 \Big) \chi_1,  \]
and this yields
\[ \|\bbp \partial_\theta^k f\|_\mu \leq \| \partial_\theta^k f_0 \chi_0  \|_\mu + \|  \partial_\theta^k f_1 \chi_1 \|_\mu.
\]
Next, we show that $\| \partial_\theta^k f_0 \chi_0  \|_\mu \lesssim \|\partial_\theta^k f\|_{L^2}$ for $0 \leq k \leq s$. For this, we use the integrability of the weighted Maxwellian to obtain
\begin{align}
\begin{aligned} \label{D-5}
\| \partial_\theta^k f_0 \chi_0 \|_\mu^2 &\leq 2\pi\int_{\mathbb{T} \times \mathbb{R}^2} \left(\alpha M |\partial_\theta^k f_0 |^2 + \beta |\partial_\omega \sqrt M|^2 |\partial_\theta^k f_0 |^2\right)d\theta d\omega d\nu \\
&\lesssim \|\partial_\theta^k f_0 \|_{L^2}^2 \lesssim \|\partial_\theta^k f\|_{L^2},
\end{aligned}
\end{align}
where $\alpha = 1 + (m/\sigma)(\omega - \nu)^2$ and $\beta = \sigma/m$. Similarly, we have
\begin{equation} \label{D-6}
  \|  \partial_\theta^k f_1 \chi_1 \|_\mu \lesssim \|\partial_\theta^k f\|_{L^2}.
\end{equation}
Finally, we combine \eqref{D-5} and \eqref{D-6} to derive the second estimate in $(i)$. \newline

\noindent $(ii)$ For any $0 \leq k \leq s$, we use H\"older inequality,
\[
\int_{\bbr^2} M \,d\omega d\nu = \frac{1}{2\pi} \quad \mbox{and} \quad \int_{\bbt} |\partial_{\theta}^k \sin(\theta_*- \theta)|^2 \,d\theta_* = \pi
\]
to get
\begin{align*}
\begin{aligned}
\Big| \partial_{\theta}^k \mathcal{S}[\sqrt{M}f] \Big| &= \Big| \int_{\bbt \times \bbr^2} \partial_{\theta}^k \sin(\theta_*- \theta) \sqrt{M} f \,d\theta_* d\omega_* d\nu_* \Big|  \\
& \leq \Big( \int_{\bbt \times \bbr^2} |\partial_{\theta}^k \sin(\theta_*- \theta)|^2 M \,d\theta_* d\omega_* d\nu_* \Big)^{\frac{1}{2}} \Big( \int_{\bbt \times \bbr^2} |f|^2 \,d\theta_* d\omega_* d\nu_* \Big)^{\frac{1}{2}}  \lesssim \|f\|_{L^2}.
\end{aligned}
\end{align*}
\end{proof}
With the aid of Lemma \ref{L4.1}, we next present the uniform bound estimate for the linearized equation \eqref{D-1}.
\begin{lemma}\label{L4.2}
For given initial data $f^{in} \in H^s(\mathbb{T} \times \mathbb{R}^2)$ with $s \geq 1$, there exist positive constants $T_0$ and $\eta_0$ such that if
\[
M + \sqrt M f^{in} \geq 0, \quad \|f^{in}\|_{H^s} \leq \eta_0 \quad  \mbox{and} \quad \sup_{0 \leq t \leq T_0}\|g(t)\|_{H^s} \leq \eta,
\]
then we have
\[   f \in \mathcal{C}([0,T_0]; H^s(\mathbb{T} \times \mathbb{R}^2)), \quad M + \sqrt M f \geq 0, \quad \sup_{0 \leq t \leq T_0}\|f(t)\|_{H^s} \leq \eta. \]
\end{lemma}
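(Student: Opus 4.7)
The plan is to obtain a uniform $H^s$ bound on $f$ via an energy estimate and then choose $T_0$ small enough to preserve the bound $\|f\|_{H^s}\leq\eta$; nonnegativity of $F=M+\sqrt{M}f$ follows from a strong maximum principle argument. Existence and uniqueness of a solution $f\in\mathcal{C}([0,T];H^s)$ for any fixed $T>0$ is a consequence of classical linear theory for the hypoelliptic Fokker-Planck equation \eqref{D-1}, whose coefficients (entering through $\mathcal{S}[\sqrt{M}g]$ and $M$) are smooth, so only the quantitative a priori bound requires work.

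For the energy estimate, I would apply $\partial^\gamma=\partial_\theta^{\gamma_1}\partial_\omega^{\gamma_2}\partial_\nu^{\gamma_3}$ for each multi-index $\gamma$ with $|\gamma|\leq s$ and take the $L^2(\bbt\times\bbr^2)$-inner product with $\partial^\gamma f$. The transport term $\omega\partial_\theta f$ integrates to zero in $\theta$, and when $\gamma_2\geq 1$ the commutator $[\partial_\omega,\omega\partial_\theta]=\partial_\theta$ produces only contributions bounded by $\|f\|_{H^s}^2$. For the $\mathcal{L}_0$-contribution, the explicit representation
\[ \mathcal{L}_0 f=\frac{\sigma}{m^2}\partial_\omega^2 f+\frac{1}{2m}f-\frac{1}{4\sigma}(\omega-\nu)^2 f, \]
combined with the coercivity of Lemma \ref{L3.2}(ii) applied to $\partial^\gamma f$, yields a negative-definite dissipation of the form $-(\lambda_0/m)\|(\bbi-\bbp)\partial^\gamma f\|_\mu^2-(1/m)\|(\partial^\gamma f)_1\|_{L_\theta^2}^2$, while the commutators between $\partial^\gamma$ and the $(\omega-\nu)^2$-coefficient are of lower differential order and polynomial growth in $\omega-\nu$, hence absorbed into $C\|f\|_{H^s}^2$ by Young's inequality against this dissipation.

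For $\mathcal{L}_1 f$ and $\mathcal{N}(g,f)$, the Leibniz rule places the $\theta$-derivatives onto $\mathcal{S}[\sqrt{M}\,\cdot\,]$ (controlled by Lemma \ref{L4.1}(ii)) and the remaining $\omega,\nu$-derivatives onto polynomial-times-Maxwellian factors with finite weighted $L^2$-moments. Using $\|g\|_{H^s}\leq\eta$, one obtains
\[ |\langle\partial^\gamma\mathcal{L}_1 f,\partial^\gamma f\rangle|\lesssim\|f\|_{H^s}^2,\qquad |\langle\partial^\gamma\mathcal{N}(g,f),\partial^\gamma f\rangle|\lesssim\eta\|f\|_{H^s}^2+\varepsilon\|(\bbi-\bbp)\partial^\gamma f\|_\mu^2, \]
with the last term absorbed by the coercivity dissipation for sufficiently small $\varepsilon$. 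Summing over $|\gamma|\leq s$ and discarding the remaining negative dissipation yields $(d/dt)\|f(t)\|_{H^s}^2\leq C(1+\eta^2)\|f(t)\|_{H^s}^2$, and Grönwall's inequality together with a choice of $T_0>0$ such that $\eta_0 e^{C(1+\eta^2)T_0/2}\leq\eta$ (possible since $\eta_0<\eta$) delivers the bound on $[0,T_0]$.

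For the nonnegativity, once $f\in\mathcal{C}([0,T_0];H^s)$ is constructed the coefficient $\mathcal{S}[\sqrt{M}f](\theta,t)$ becomes a given smooth function, and $F=M+\sqrt{M}f$ satisfies a linear Vlasov-Fokker-Planck equation with smooth drift; substituting $F=e^{t/m}u$ and applying the strong maximum principle exactly as in Lemma \ref{L2.1} produces $u\geq 0$ and hence $F\geq 0$. The main technical obstacle will be the precise bookkeeping of commutators between $\partial^\gamma$ and the Maxwellian-weighted coefficients in $\mathcal{L}_0$, $\mathcal{L}_1$, and $\mathcal{N}$: each such commutator produces polynomial factors in $\omega-\nu$ that must be absorbed either by the coercive weight $1+(m/\sigma)(\omega-\nu)^2$ or by the pointwise exponential decay of $\sqrt{M}$.
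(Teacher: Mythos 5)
Your proposal matches the paper's proof in its essential structure: apply derivatives to the linearized equation, use the coercivity of $\mathcal{L}_0$ from Lemma~\ref{L3.2}(ii), control the $\mathcal{L}_1$- and $\mathcal{N}(g,f)$-terms via Lemma~\ref{L4.1}(ii) and the $L^\infty$ bound on $\mathcal{S}[\sqrt{M}g]$, absorb the weighted contributions into the $\mu$-dissipation, sum, apply Gr\"onwall, and take $T_0$ small so that $\eta_0 e^{CT_0}<\eta$. The paper only details the $\partial_\theta^k$-derivatives and remarks that mixed $\partial_\theta^\alpha\partial_\omega^\beta$-derivatives are handled ``by a similar strategy,'' whereas you spell out the $\mathcal{L}_0$-commutator bookkeeping and also include $\nu$-derivatives and the maximum-principle nonnegativity argument (the paper leaves the latter implicit, by reference to Lemma~\ref{L2.1}); these are additions rather than departures.
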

\begin{proof} For $0 \leq k \leq s$, we apply $\partial_\theta^k$ to $\eqref{D-1}$ to obtain
\[
\partial_t \partial_\theta^k f + \omega \partial_\theta^{k+1}f  -  \partial_\theta^k {\mathcal L}_0 f = \partial_\theta^k {\mathcal{L}_1}f +   \partial_\theta^k \mathcal{N}(g, f).
\]
Then we use the commutative property of $\mathcal{L}_0$ and $\partial_{\theta}^{k}$ to yield
\begin{equation} \label{D-7}
\frac12\frac{d}{dt}\|\partial_\theta^k f\|_{L^2}^2 + \int_\mathbb{T} \langle -{\mathcal L}_0 \partial_\theta^k f,\partial_\theta^k f \rangle \,d\theta  =   \int_\mathbb{T} \langle \partial_\theta^k \mathcal{L}_1 f,\partial_\theta^k f \rangle \,d\theta + \int_\mathbb{T} \langle \partial_\theta^k \mathcal{N}(g, f),\partial_\theta^k f \rangle \,d\theta.
\end{equation}

\noindent $\bullet$ (Estimate A): It follows from Lemma \ref{L3.1} $(ii)$ that
\begin{equation} \label{D-12}
\int_\mathbb{T} \langle -{\mathcal L}_0 \partial_\theta^k f,\partial_\theta^k f \rangle \,d\theta  \geq \frac{\lambda_0}{m} \| (\bbi - \bbp) \partial_\theta^k f\|_{\mu}^2 + \frac1m \|\partial_\theta^k f_1 \|_{L^2}^2.
\end{equation}

\noindent $\bullet$ (Estimate B):  It is clear that
\begin{align*}
\begin{aligned}
\int_\mathbb{T} \langle \partial_\theta^k \mathcal{L}_1 f,\partial_\theta^k f \rangle \,d\theta &=\frac \kappa\sigma \int_{\mathbb{T} \times
\bbr^2}  \partial_\theta^k \mathcal{S}[\sqrt{M} f] (\omega - \nu)\sqrt{M}\partial_\theta^k f \,d\theta  d\omega d\nu.
\end{aligned}
\end{align*}
Thus, we use H\"older inequality, Lemma \ref{L4.1} $(ii)$, and
\[
\int_{\bbt \times \bbr^2} (\omega - \nu)^2 M \,d\theta d\omega d\nu = \frac{\sigma}{m}
\]
to have
\begin{align}
\begin{aligned} \label{D-10}
&\left| \int_\mathbb{T} \langle \partial_\theta^k \mathcal{L}_1 f,\partial_\theta^k f \rangle \,d\theta \right| \cr
&\quad \leq \frac \kappa\sigma \| \partial_\theta^k \mathcal{S}[\sqrt{M} f]\|_{L^{\infty}} \Big( \int_{\mathbb{T} \times \bbr^2} (\omega - \nu)^2 M \,d\theta d\omega d\nu  \Big)^{\frac{1}{2}} \Big( \int_{\mathbb{T}\times \bbr^2} |\partial_\theta^k f|^2 \,d\theta d\omega d\nu \Big)^{\frac{1}{2}} \\
 &\quad = \frac{\kappa}{\sqrt{m\sigma}}\|\partial_\theta^k \mathcal{S}[\sqrt{M} f]\|_{L^\infty}\|\partial_\theta^k f\|_{L^2} \lesssim \frac{\kappa}{\sqrt{m\sigma}} \|f\|_{L^2}\|\partial_\theta^k f\|_{L^2}.
\end{aligned}
\end{align}
\noindent $\bullet$ (Estimate C):
We use $\eqref{D-1}_2$ to obtain
\begin{align*}
\begin{aligned}
\int_\mathbb{T} \langle \partial_\theta^k \mathcal{N}(g, f),\partial_\theta^k f \rangle \,d\theta&= \frac{\kappa}{2\sigma} \sum_{0 \leq \ell \leq k} \binom{k}{\ell} \int_{\bbt \times \bbr^2} \partial_{\theta}^{k - \ell}\mathcal {S}[\sqrt{M} g] (\omega - \nu) \partial_{\theta}^{\ell}f  \partial_{\theta}^{k}f \,d\theta d\omega d\nu \\
&- \frac{\kappa}{m} \sum_{0 \leq \ell \leq k} \binom{k}{\ell} \int_{\mathbb{T} \times \bbr^2}  \partial_\theta^{k-\ell}\mathcal{S}[\sqrt{M} g]\partial_\omega \partial_{\theta}^{\ell}f \partial_\theta^k f \,d\theta d\omega d\nu \\
&=: {\mathcal I}_{21} + {\mathcal I}_{22}.
\end{aligned}
\end{align*}
Next we will estimate the ${\mathcal I}_{2i}, i =1,2$ one by one. \newline

\noindent $\diamond$ (Estimate on ${\mathcal I}_{21}$): We use H\"older inequality and Lemma \ref{L4.1} $(ii)$ to obtain
 \begin{align}
\begin{aligned} \label{D-8}
|{\mathcal I}_{21}| &\leq \frac{\kappa}{2\sigma} \sum_{0 \leq \ell \leq k} \binom{k}{\ell} \|\partial_\theta^{k - \ell} \mathcal{S}[\sqrt{M} g]\|_{L^\infty} \\
& \quad \times \Big( \int_{\mathbb{T}\times \bbr^2} \frac{\sigma}{m} \cdot \frac{m}{\sigma}(\omega - \nu)^2 |\partial_\theta^{\ell} f|^2 d\theta d\omega d\nu \Big)^{\frac{1}{2}}  \Big( \int_{\mathbb{T}\times \bbr^2} | \partial_\theta^k f|^2 d\theta d\omega d\nu \Big)^{\frac{1}{2}}  \cr
&\lesssim \frac{\kappa}{\sqrt{m\sigma}} \sum_{0 \leq \ell \leq k} \|\partial_\theta^\ell \mathcal{S}[\sqrt M g]\|_{L^\infty} \|\partial_\theta^{\ell}f\|_{\mu} \|\partial_\theta^k f\|_{L^2} \\
&\lesssim  \frac{\kappa}{\sqrt{m\sigma}} \|g\|_{L^2} \|\partial_\theta^k f\|_{L^2}  \sum_{0 \leq \ell \leq k} \|\partial_\theta^{ \ell}f\|_{\mu}.
\end{aligned}
\end{align}
\noindent $\diamond$ (Estimate on ${\mathcal I}_{22}$): Similar to the estimate of ${\mathcal I}_{21}$, we have
\begin{align}
\begin{aligned} \label{D-11}
|{\mathcal I}_{22}| &\leq  \frac {\kappa}{m}  \sum_{0 \leq \ell \leq k} \binom{k}{\ell} \|\partial_\theta^{k - \ell} \mathcal{S}[\sqrt{M} g]\|_{L^\infty} \Big( \int_{\mathbb{T}\times \bbr^2} \frac{m}{\sigma} \cdot \frac{\sigma}{m}|\partial_\theta^{\ell} \partial_{\omega} f|^2 \,d\theta d\omega d\nu \Big)^{\frac{1}{2}} \\
&\times \Big( \int_{\mathbb{T}\times \bbr^2} | \partial_\theta^k f|^2 \,d\theta d\omega d\nu \Big)^{\frac{1}{2}}  \\
&\lesssim \frac{\kappa}{\sqrt{m\sigma}} \sum_{0 \leq \ell \leq k} \|\partial_\theta^\ell \mathcal{S}[\sqrt M g]\|_{L^\infty} \|\partial_\theta^{\ell}f\|_{\mu} \|\partial_\theta^k f\|_{L^2} \\
& \lesssim  \frac{\kappa}{\sqrt{m\sigma}} \|g\|_{L^2} \|\partial_\theta^k f\|_{L^2}  \sum_{0 \leq \ell \leq k} \|\partial_\theta^{ \ell}f\|_{\mu}.
\end{aligned}
\end{align}
We combine estimates \eqref{D-8} and \eqref{D-11} to have
\begin{align}
\begin{aligned} \label{D-11-1}
|{\mathcal I}_{21}| + |{\mathcal I}_{22}|  \lesssim  \frac{\kappa}{\sqrt{m\sigma}} \|g\|_{L^2} \|\partial_\theta^k f\|_{L^2}  \sum_{0 \leq \ell \leq k} \|\partial_\theta^{ \ell}f\|_{\mu}.
\end{aligned}
\end{align}
Finally,  we substitute estimates  \eqref{D-12}, \eqref{D-10}, and \eqref{D-11-1} into \eqref{D-7} obtain
\begin{align}\label{D-13}
\begin{aligned}
&\frac12\frac{d}{dt}\|\partial_\theta^k f\|_{L^2}^2 + \frac{\lambda_0}{m} \| (\bbi - \bbp) \partial_\theta^k f\|_{\mu}^2 \cr
&\hspace{1cm} \lesssim \frac{\kappa}{\sqrt{m\sigma}} \Big( \|f\|_{L^2} \|\partial_\theta^k f\|_{L^2}
+ \|g\|_{L^2} \|\partial_\theta^k f\|_{L^2}  \sum_{0 \leq \ell \leq k} \|\partial_\theta^{ \ell}f\|_{\mu} \Big).
\end{aligned}
\end{align}
On the other other hand, it follows from Lemma \ref{L4.1} that
\begin{equation} \label{D-14}
\|\partial_\theta^k f\|_\mu \leq \| (\bbi - \bbp)\partial_\theta^k f\|_\mu + \|\bbp \partial_\theta^k f\|_\mu \leq \| (\bbi - \bbp) \partial_\theta^k f\|_\mu + C\|\partial_\theta^k f\|_{L^2},
\end{equation}
for some $C>0$. Now, we combine \eqref{D-13} and \eqref{D-14} to obtain
\begin{align}\label{DD-1}
\begin{aligned}
& \frac12\frac{d}{dt}\|\partial_\theta^k f\|_{L^2}^2 + \frac{\lambda_0}{m} \|\partial_\theta^k f\|_{\mu}^2 \\
& \hspace{0.5cm} \lesssim \frac{\kappa}{\sqrt{m\sigma}} \Big( \|f\|_{L^2} \|\partial_\theta^k f\|_{L^2}
+ \|g\|_{L^2} \|\partial_\theta^k f\|_{L^2}  \sum_{0 \leq \ell \leq k} \|\partial_\theta^{ \ell}f\|_{\mu} \Big) + \|\partial_\theta^k f\|_{L^2}^2.
\end{aligned}
\end{align}
We then sum \eqref{DD-1} over $k$ to get
\begin{align*}
\begin{aligned}
& \frac12\frac{d}{dt} \| f\|_{H_{\theta}^{k}}^2 + \frac{\lambda_0}{m} \sum_{\ell =0}^{k} \|\partial_\theta^k f\|_{\mu}^2  \leq  C \|f\|_{H_{\theta}^{k}}^2.
\end{aligned}
\end{align*}
Using the similar strategy for the $\partial_\theta^\alpha \partial_\omega^\beta$ derivatives with $0 \leq \alpha + \beta \leq s$, we also find
\[
\frac{d}{dt}\|f\|_{H^s}^2 + \frac{\lambda_0}{m} \sum_{0 \leq \alpha + \beta \leq s}\|\partial_\theta^\alpha \partial_\omega^\beta f\|_\mu^2 \leq C \|f\|_{H^s}^2.
\]
We integrate the above relation over the time interval $[0,T_0]$ to obtain
\begin{align*}
\begin{aligned}
\|f(t)\|_{H^s}  \leq e^{CT_0} \|f^{in}\|_{H^s}.
\end{aligned}
\end{align*}
Finally, we choose the parameters $T_0$ such that
\[
\eta_0 e^{C T_0} < \eta
\]
to conclude
\[
\sup_{0 \leq t \leq T_0}\|f(t)\|_{H^s} \leq \eta_0 e^{C T_0} < \eta.
\]
\end{proof}

\subsection{Approximate solutions} In this subsection, we provide a sequence of approximate solutions $\{ f^{\ell} \}_{\ell=0}^{\infty}$ as follows. For $\ell = 0$,  we set 
\[ f^0(\theta,\omega,\nu,t) = 0 \quad \mbox{for} \quad (\theta,\omega,\nu,t) \in \mathbb{T} \times \mathbb{R}^2 \times \mathbb{R}_+.   \]
Suppose that  the $\ell$-th iterate $f^{\ell}$ is given. Then, we define approximated solution $f^{\ell + 1}$ as a solution to the linear equation:
\begin{equation}
\begin{cases} \label{D-16}
 \partial_t f^{\ell + 1} + \omega \partial_{\theta} f^{\ell + 1} =  {\mathcal L}_0 f^{\ell + 1} +  {\mathcal L}_1 f^{\ell + 1} + \mathcal{N}(f^{\ell}, f^{\ell + 1}),   \\
 f^{\ell}(\theta,\omega,\nu, 0) =: f^{in}(\theta,\omega,\nu), \quad (\theta,\omega,\nu) \in \mathbb{T} \times \mathbb{R}^2,
\end{cases}
\end{equation}
for $\ell \geq 0$. Here $\mathcal{N}(f^{\ell},f^{\ell + 1})$ is given by
\[
\mathcal{N}(f^{\ell},f^{\ell + 1}) = \frac{\kappa}{2\sigma} \mathcal {S}[\sqrt{M} f^{\ell}] (\omega - \nu) f^{\ell + 1} -  \frac{\kappa}{m} \mathcal{S}[\sqrt{M} f^{\ell}] \partial_{\omega}f^{\ell + 1} .
\]
Let $s \geq 1$ and ${\mathcal X}(s,T; \eta)$ be a solution space for $f$ defined by
\[
{\mathcal X}(s,T;\eta) := \left\{ f \in \mathcal{C}([0,T]; H^s(\mathbb{T}\times \mathbb{R}^2)) \,:\, M + \sqrt M f \geq 0 \quad \mbox{and} \quad \sup_{0 \leq t \leq T}\|f(t)\|_{H^s}  < \eta \right \}.
\]
As a direct consequence of Lemma \ref{L4.2}, we have the following proposition providing the uniform bound estimate of $f^{\ell}$ for each $\ell \geq 1$.

\begin{proposition}\label{P4.1}
For any given constants $\eta_0 < \eta$, there exists a positive constant $T_0= T_0(\eta_0, \eta)$ such that if the initial datum $f^{in}$ satisfies
\[ f^{in} \in H^s(\mathbb{T} \times \mathbb{R}^2), \quad \|f^{in}\|_{H^s} \leq \eta_0 \quad  \mbox{and} \quad M + \sqrt M f^{in} \geq 0, \]
then for each $\ell \geq 0$, $f^{\ell}$ is well-defined and $f^{\ell} \in {\mathcal X}(s,T_0; \eta)$.
\end{proposition}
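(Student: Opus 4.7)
The plan is to proceed by induction on $\ell \geq 0$, using Lemma \ref{L4.2} as the engine. For the base case $\ell = 0$, since $f^0 \equiv 0$, we have $M + \sqrt{M} f^0 = M \geq 0$ and $\|f^0(t)\|_{H^s} = 0 < \eta$, so that $f^0 \in {\mathcal X}(s,T_0;\eta)$ for any choice of $T_0 > 0$, which gives the first rung for free.

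For the inductive step, I would assume $f^\ell \in {\mathcal X}(s,T_0;\eta)$ and read \eqref{D-16} as the linear problem \eqref{D-1} with $g := f^\ell$ treated as a known coefficient. The standard linear solvability theory invoked just before Lemma \ref{L4.1} then produces a unique strong solution $f^{\ell+1} \in \mathcal{C}([0,T];H^s(\mathbb{T}\times\mathbb{R}^2))$ on any time interval $[0,T]$. Applying Lemma \ref{L4.2} with this choice of $g$ — whose hypothesis $\sup_{[0,T_0]}\|g(t)\|_{H^s} \leq \eta$ is exactly the induction hypothesis, and whose assumptions on the initial datum are precisely the standing assumptions of the proposition — yields at once both the pointwise nonnegativity $M + \sqrt{M} f^{\ell+1} \geq 0$ and the uniform bound $\sup_{0 \leq t \leq T_0}\|f^{\ell+1}(t)\|_{H^s} \leq \eta$. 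Hence $f^{\ell+1} \in {\mathcal X}(s,T_0;\eta)$ and the induction closes.

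The only subtle point, and the nearest thing to a real obstacle, is that the time $T_0$ supplied by Lemma \ref{L4.2} must be independent of $\ell$; otherwise iterating could force $T_0$ to shrink to zero and the sequence would not all live on a common interval. This independence is already built into the proof of Lemma \ref{L4.2}: inspecting \eqref{DD-1}, the constant $C$ is determined only by the fixed parameters $m, \sigma, \kappa$ together with $\eta$, and the selection criterion $\eta_0 e^{CT_0} < \eta$ involves no $\ell$-dependent quantity. Consequently, one and the same $T_0 = T_0(\eta_0, \eta)$ serves at every step of the iteration, which is exactly the uniformity asserted in the statement of Proposition \ref{P4.1}.
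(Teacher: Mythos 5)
Your proof is correct and follows exactly the route the paper has in mind: the paper declares Proposition \ref{P4.1} to be ``a direct consequence of Lemma \ref{L4.2},'' which is precisely the induction on $\ell$ you carry out, and your explicit remark that $T_0$ from \eqref{DD-1} depends only on $\eta_0,\eta$ (and fixed parameters) but not on $\ell$ is the key uniformity that makes the iteration close. Nothing to add.
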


In next lemma, we show that the sequence $\{ f^{\ell} \}$ in $\mathcal{C}([0,T_0]; L^2(\mathbb{T} \times \mathbb{R}^2))$ is Cauchy.

\begin{lemma}\label{L4.3}
Let $\{ f^{\ell} \}$ be a sequence of approximated solutions  with the initial data $f^{in}$ satisfying $\|f^{in}\|_{H^s} \leq \eta_0$. Then, the sequence $\{ f^{\ell} \}$ is a Cauchy sequence in $\mathcal{C}([0,T_0]; L^2(\mathbb{T} \times \mathbb{R}^2))$.
\end{lemma}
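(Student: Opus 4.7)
The plan is to derive an $L^2$ energy inequality for the consecutive difference $g^{\ell+1} := f^{\ell+1} - f^\ell$ and then iterate so as to produce a factorial decay in $\ell$. Subtracting the defining relations \eqref{D-16} for $f^{\ell+1}$ and $f^\ell$ and exploiting the bilinearity of $\mathcal{N}$ to write
\[
\mathcal{N}(f^\ell, f^{\ell+1}) - \mathcal{N}(f^{\ell-1}, f^\ell) = \mathcal{N}(f^\ell, g^{\ell+1}) + \mathcal{N}(g^\ell, f^\ell),
\]
one obtains a linear equation for $g^{\ell+1}$ of exactly the form \eqref{D-1} with coefficient $f^\ell$ plus an additional forcing $\mathcal{N}(g^\ell, f^\ell)$, and with vanishing initial data $g^{\ell+1}(0) = 0$ for $\ell \ge 1$ since all iterates share the common initial datum $f^{in}$.

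The next step is to test this equation against $g^{\ell+1}$ in $L^2(\mathbb{T}\times\mathbb{R}^2)$. The transport contribution vanishes, Lemma~\ref{L3.2}(ii) supplies the coercivity $\int_\mathbb{T}\langle -\mathcal{L}_0 g^{\ell+1}, g^{\ell+1}\rangle\,d\theta \ge \tfrac{\lambda_0}{m}\|(\mathbb{I}-\mathbb{P})g^{\ell+1}\|_\mu^2$, and the $\mathcal{L}_1$ and $\mathcal{N}(f^\ell, g^{\ell+1})$ terms are handled exactly as in the proof of Lemma~\ref{L4.2} with $k=0$; in particular the $\partial_\omega$-piece of $\mathcal{N}(f^\ell, g^{\ell+1})$ integrates to zero against $g^{\ell+1}$ because $\mathcal{S}[\sqrt{M}f^\ell]$ is $\omega$-independent. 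The delicate term, and what I expect to be the main obstacle, is the forcing contribution $\int \mathcal{N}(g^\ell, f^\ell)\, g^{\ell+1}$: its $(\omega-\nu)f^\ell$-subterm naively demands control of the weighted norm $\|(\omega-\nu)f^\ell\|_{L^2}$, which is not provided by the uniform $H^s$-bound on $f^\ell$. The resolution is to arrange Cauchy--Schwarz so that the $(\omega-\nu)$-weight lands on $g^{\ell+1}$ rather than on $f^\ell$, since the weighted norm of $g^{\ell+1}$ is available via the $(\mathbb{I}-\mathbb{P})$-dissipation on the left. Combined with Lemma~\ref{L4.1}(ii) and $\|f^\ell\|_{L^2} \le \eta$, this yields
\[
\Bigl|\tfrac{\kappa}{2\sigma}\!\int \mathcal{S}[\sqrt{M}g^\ell](\omega-\nu)f^\ell g^{\ell+1}\,d\theta d\omega d\nu\Bigr| \lesssim \|g^\ell\|_{L^2}\|f^\ell\|_{L^2}\|(\omega-\nu)g^{\ell+1}\|_{L^2} \lesssim \eta\|g^\ell\|_{L^2}\|g^{\ell+1}\|_\mu,
\]
while the $\partial_\omega f^\ell$-subterm is bounded directly by $\eta\|g^\ell\|_{L^2}\|g^{\ell+1}\|_{L^2}$ using $\|\partial_\omega f^\ell\|_{L^2} \le \|f^\ell\|_{H^s} \le \eta$ (this is where $s \ge 1$ is used).

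Splitting $\|g^{\ell+1}\|_\mu \le \|(\mathbb{I}-\mathbb{P})g^{\ell+1}\|_\mu + C\|g^{\ell+1}\|_{L^2}$ via Lemma~\ref{L4.1} and applying Young's inequality throughout, every right-hand side contribution is either absorbed into the dissipation or bounded by a multiple of $\|g^{\ell+1}\|_{L^2}^2$ and $\|g^\ell\|_{L^2}^2$, leaving
\[
\frac{d}{dt}\|g^{\ell+1}\|_{L^2}^2 \le C\bigl(\|g^{\ell+1}\|_{L^2}^2 + \|g^\ell\|_{L^2}^2\bigr)
\]
for some $C = C(T_0,\eta)$ independent of $\ell$. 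Gronwall with $g^{\ell+1}(0) = 0$ then gives $\|g^{\ell+1}(t)\|_{L^2}^2 \le Ce^{CT_0}\int_0^t \|g^\ell(s)\|_{L^2}^2\,ds$, and iterating this inequality $\ell$ times against the uniform bound $\sup_{[0,T_0]}\|g^1\|_{L^2}^2 \lesssim \eta^2$ produces the classical factor $T_0^\ell/\ell!$ from the nested time integrations. Hence $\sup_{[0,T_0]}\|g^{\ell+1}\|_{L^2}^2 \lesssim (CT_0 e^{CT_0})^\ell\eta^2/\ell! \to 0$ as $\ell\to\infty$, which is summable in $\ell$ and establishes that $\{f^\ell\}$ is Cauchy in $\mathcal{C}([0,T_0]; L^2(\mathbb{T}\times\mathbb{R}^2))$.
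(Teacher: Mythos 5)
Your proposal is correct and follows essentially the same route as the paper's proof: subtract consecutive iterates, test the difference against itself in $L^2$, use the $\mathcal{L}_0$-coercivity of Lemma~\ref{L3.2}(ii) and the $L^\infty$-bound on $\mathcal{S}$ from Lemma~\ref{L4.1}(ii) to control the nonlinear contributions, then iterate the resulting Gronwall inequality to obtain the factorial decay $(CT_0)^\ell/\ell!$. Your bilinear splitting $\mathcal{N}(f^\ell,g^{\ell+1})+\mathcal{N}(g^\ell,f^\ell)$ differs only cosmetically from the paper's $\mathcal{N}(g^\ell,f^{\ell+1})+\mathcal{N}(f^{\ell-1},g^{\ell+1})$, and your explicit placement of the $(\omega-\nu)$-weight on $g^{\ell+1}$ so it is absorbed by the $(\mathbb{I}-\mathbb{P})$-dissipation is a more careful rendering of what the paper writes in shorthand for its term $\mathcal{I}_{32}$.
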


\begin{proof}
It follows from \eqref{D-16} that
$$\begin{aligned}
&\partial_t (f^{\ell + 1} - f^{\ell}) + \omega \partial_\theta(f^{\ell + 1} - f^{\ell})  \\
& \hspace{1cm} =   {\mathcal L}_0(f^{\ell + 1} - f^{\ell}) +  {\mathcal L}_1(f^{\ell + 1} - f^{\ell}) + \mathcal{N}(f^{\ell} - f^{\ell - 1}, f^{\ell + 1}) + \mathcal{N}(f^{\ell - 1}, f^{\ell + 1} - f^{\ell}).
\end{aligned}$$
We use the similar argument as in Lemma \ref{L4.2} to yield
\begin{align}
\begin{aligned} \label{D-17}
&\frac12\frac{d}{dt}\|f^{\ell + 1} - f^{\ell}\|_{L^2}^2 + \frac{\lambda_0}{m}\| (\bbi - \bbp) (f^{\ell + 1} - f^{\ell})\|_\mu^2\cr
&\hspace{1cm} \leq \int_{\mathbb{T}} \big| \langle  {\mathcal L}_1(f^{\ell + 1} - f^{\ell}), f^{\ell + 1} - f^{\ell} \rangle \big| \,d\theta \\
& \hspace{1cm} + \int_{\mathbb{T}} \big| \langle \mathcal{N}(f^{\ell} - f^{\ell - 1}, f^{\ell + 1}), f^{\ell + 1} - f^{\ell} \rangle \big| \,d\theta \\
& \hspace{1cm} + \int_{\mathbb{T}} \big| \langle \mathcal{N}(f^{\ell - 1}, f^{\ell + 1} - f^{\ell}), f^{\ell + 1} - f^{\ell} \rangle \big| \,d\theta \\
& \hspace{1cm} =: {\mathcal I}_{31} + {\mathcal I}_{32} + {\mathcal I}_{33},
\end{aligned}
\end{align}
where $\mathcal{I}_{31}$ and $\mathcal{I}_{32}$ can be estimated as follows.
\begin{align}
\begin{aligned} \label{D-18}
|{\mathcal I}_{31}|  &\lesssim \frac{\kappa}{\sqrt{m \sigma}} \|f^{\ell + 1} - f^{\ell}\|_{L^2}^2, \\
|{\mathcal I}_{32}|  &\lesssim \frac{\kappa}{\sqrt{m \sigma}} \|f^{\ell + 1}\|_{\mu} \|f^{\ell} - f^{\ell - 1}\|_{L^2} \|f^{\ell + 1} - f^{\ell}\|_{L^2} \\
& \leq \frac{C\kappa \eta_0}{\sqrt{m \sigma}} \|f^{\ell} - f^{\ell - 1}\|_{L^2} \|f^{\ell + 1} - f^{\ell}\|_{L^2} \\
& \leq C \|f^{\ell} - f^{\ell - 1}\|_{L^2}^2 + C \|f^{\ell + 1} - f^{\ell}\|_{L^2},
\end{aligned}
\end{align}
for some $C>0$. Next, we estimate $\mathcal{I}_{33}$ as follows. Note that
\begin{align*}
\begin{aligned}
{\mathcal I}_{33} &= \frac{\kappa}{2\sigma} \int_{\bbt \times \bbr^2} \mathcal {S}[\sqrt{M}f^{\ell - 1}] (\omega - \nu)( f^{\ell + 1} - f^{\ell})^2 \,d\theta d\omega d\nu \\
&\hspace{0.5cm} -  \frac{\kappa}{m} \int_{\bbt \times \bbr^2} \mathcal{S}[\sqrt{M} f^{\ell - 1}] \partial_{\omega}(f^{\ell + 1} -f^{\ell}) (f^{\ell + 1} -f^{\ell}) \,d\theta d\omega d\nu.
\end{aligned}
\end{align*}
We can directly check that the second term in ${\mathcal I}_{33}$ equals to zero. Thus, we only need to estimate the first term. We use H\"older inequality and Lemma \ref{L4.1} $(ii)$ to get
\begin{align} \label{D-18-1}
\begin{aligned}
|{\mathcal I}_{33}|  &\leq \frac{\kappa}{2\sigma} \|\mathcal {S}[\sqrt{M}f^{\ell - 1}]\|_{L^{\infty}} \\
& \quad \times \sqrt{ \frac{\sigma}{m}} \Big( \int_{\bbt \times \bbr^2}  \frac{m}{\sigma}(\omega - \nu)^2|f^{\ell + 1} - f^{\ell}|^2 \,d\theta d\omega d\nu \Big)^{\frac{1}{2}}  \Big( \int_{\bbt \times \bbr^2}  |f^{\ell + 1} - f^{\ell}|^2 \,d\theta d\omega d\nu \Big)^{\frac{1}{2}} \\
& \lesssim \frac{\kappa}{\sqrt{m \sigma}} \|f^{\ell - 1}\|_{L^2}  \|f^{\ell + 1} - f^{\ell}\|_{\mu}  \|f^{\ell + 1} - f^{\ell}\|_{L^2} \\
& \leq \frac{C\kappa \eta_0}{\sqrt{m \sigma}} \|f^{\ell + 1} - f^{\ell}\|_{\mu}  \|f^{\ell + 1} - f^{\ell}\|_{L^2} \\
&\leq \frac{\lambda_0}{2m}\|f^{\ell + 1} - f^{\ell}\|_{\mu}^2 + C \|f^{\ell + 1} - f^{\ell}\|_{L^2}^2,
\end{aligned}
\end{align}
for some $C>0$. We combine \eqref{D-17}, \eqref{D-18}, and \eqref{D-18-1} to get
\begin{align*}
\begin{aligned}
& \frac{d}{dt}\|f^{\ell + 1} - f^{\ell}\|_{L^2}^2 + \frac{\lambda_0}{m}\|f^{\ell + 1} - f^{\ell}\|_\mu^2 \lesssim   \|f^{\ell + 1} - f^{\ell}\|_{L^2}^2 + \|f^{\ell} -f^{\ell - 1}\|_{L^2}^2.
\end{aligned}
\end{align*}
Thus we have
\[
\|f^{\ell + 1} - f^{\ell}\|_{L^2}^2 \leq \frac{CT_0^{\ell+1}}{(\ell+1)!} \quad \mbox{for} \quad t \in [0,T_0].
\]
\end{proof}

\subsection{Proof of Theorem \ref{T4.1}} We are now ready to provide the proof of Theorem \ref{T4.1}.  We apply Gagliardo-Nirenberg interpolation inequality together with Proposition \ref{P4.1} and Lemma \ref{L4.3} to see
\[
f^{\ell} \to f \quad \mbox{in} \quad \mathcal{C}([0,T_0];H^{s-1}(\mathbb{T} \times \mathbb{R}^2)) \quad \mbox{as} \quad \ell \to \infty.
\]
Moreover, it follows from the lower semicontinuity of the norm that $f^{\ell} \in X(s,T_0; \eta)$ gives
\[
M + \sqrt M f \geq 0 \quad \mbox{and} \quad \sup_{0 \leq t \leq T_0}\|f(t)\|_{H^s} \leq L,
\]
(see \cite{Choi,D-F-T} for more details). This proves the local-in-time existence of solutions. For the proof of uniqueness, let $f$ and $g$ be the strong solutions obtained above with the same initial data. Then, we have
\[
\|f(t) - g(t)\|_{L^2}^2 \leq C\int_0^t \|f(\tau) - g(\tau)\|_{L^2}^2 \,d\tau,
\]
for some positive constant $C > 0$, thanks to Lemma \ref{L4.3}. Then the standard argument establishes the uniqueness of the strong solutions satisfying the desired regularity.


%
%
%
%
\section{A priori estimates: Proof of Theorem \ref{T-I}} \label{sec:5}
In this section, we provide a priori estimates for the global-in-time existence of the unique strong solution to the perturbed equation \eqref{A-3}. Employing the classical energy method together with our careful analysis on the macro-micro decomposition, we obtain uniform a priori estimates of energy inequalities without any smallness assumptions on the initial data. Those estimates enable us to extend the local-in-time strong solutions to the global-in-time ones. \newline

Recall our main equation:
\begin{align}
\left \{ \begin{aligned} \label{F-1}
&\partial_t f + \omega \partial_{\theta} f = {\mathcal L}_0 f + {\mathcal L}_1 f + {\mathcal N}(f, f), \\
& f(\theta, \omega, \nu, 0) = \frac{1}{\sqrt{M}} \big( F^{in} - M \big) := f^{in},
\end{aligned} \right.
\end{align}
where the linear operators ${\mathcal L}_i,~i=0,1$ and the nonlinear operator  ${\mathcal N}$ are given by
\begin{align*}
\begin{aligned} 
{\mathcal L}_0 f &:= \frac{\sigma}{m^2}  \frac{1}{\sqrt{M}} \partial_{\omega} \Big[  M \partial_{\omega} \Big( \frac{f}{\sqrt{M}}  \Big) \Big], \\
{\mathcal L}_1 f &:=\frac{\kappa}{\sigma} (\omega - \nu) \sqrt{M} \mathcal{S}[\sqrt{M} f], \\
{\mathcal N}(f,f) &:= \frac{\kappa}{2\sigma} \mathcal {S}[\sqrt{M} f] (\omega - \nu) f -  \frac{\kappa}{m} \mathcal{S}[\sqrt{M} f] \partial_{\omega}f.
\end{aligned}
\end{align*}

We first present a technical lemma which will be used later.

\begin{lemma}\label{L6.1}
Let $s \geq 1$ and $T >0$ be given. Suppose that $f \in \mathcal{C}([0,T]; H^s(\mathbb{T} \times \mathbb{R}^2))$ is a solution to the equation \eqref{F-1}. Then, for $0 \leq k \leq s$ and $0 \leq t \leq T$,  we have
\begin{align*}
\begin{aligned}
&(i)~\| \partial^k_\theta S[\sqrt{M} f](t) \|_{L^{\infty}} \leq \min \big\{ 1, \| f_0\|_{L^{2}} \big\}. \\
&(ii)~\|\partial_\theta^k \bbp f(t)\|_{L^2} \leq  \|\partial_\theta^k f_0 \|_{L^2} + \|\partial_\theta^k f_1 \|_{L^2}. \\
&(iii)~\|\partial_\theta^k \partial_{\omega} \bbp f\|_{L^2_{\omega,\nu}}
 \lesssim \sqrt{\frac{m}{\sigma}} \left(|\partial_{\theta}^k f_0| + |\partial_{\theta}^k f_1|\right).
\end{aligned}
\end{align*}
\end{lemma}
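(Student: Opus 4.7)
The plan is to reduce each estimate to a direct calculation, exploiting (a) the explicit form of the basis functions $\chi_0 = \sqrt{2\pi M}$ and $\chi_1 = \sqrt{2\pi m/\sigma}\,(\omega-\nu)\sqrt{M}$, (b) the projection identity $\bbp f = f_0\chi_0 + f_1\chi_1$ with $f_0,f_1$ depending only on $(\theta,t)$, and (c) standard Gaussian moment identities for $M$. Since $\chi_0,\chi_1$ are independent of $\theta$, the operator $\bbp$ commutes with $\partial_\theta^k$, so throughout we may use $\partial_\theta^k \bbp f = (\partial_\theta^k f_0)\chi_0 + (\partial_\theta^k f_1)\chi_1$.

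For (i), the key observation is $\mathcal{S}[\sqrt{M}f] = \mathcal{S}[F-M] = \mathcal{S}[F]$ since $\mathcal{S}[M]=0$ by \eqref{B-21}. Combined with $F\geq 0$, $\int_{\bbt\times\bbr^2} F\,d\theta\,d\omega\,d\nu = 1$ from Lemma \ref{L2.1}, and $|\partial_\theta^k \sin(\theta_*-\theta)|\leq 1$, this immediately yields the bound by $1$. For the bound by $\|f_0\|_{L^2}$, I first integrate out $\omega_*,\nu_*$ using the definition $f_0 = \sqrt{2\pi}\int \sqrt{M}f\,d\omega\,d\nu$ to obtain
\[
\mathcal{S}[\sqrt{M}f](\theta,t) \;=\; \frac{1}{\sqrt{2\pi}}\int_{\bbt}\sin(\theta_*-\theta)\,f_0(\theta_*,t)\,d\theta_*,
\]
then differentiate in $\theta$ under the integral and apply Cauchy--Schwarz using $\int_\bbt |\partial_\theta^k\sin(\theta_*-\theta)|^2\,d\theta_* = \pi$, which gives $|\partial_\theta^k \mathcal{S}[\sqrt{M}f]| \leq 2^{-1/2}\|f_0\|_{L^2} \leq \|f_0\|_{L^2}$.

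For (ii), the orthonormality $\langle \chi_i,\chi_j\rangle_{L^2_{\omega,\nu}} = \delta_{ij}$ (which follows because $\chi_1$ is odd and $\chi_0$ even in $\omega-\nu$, together with the normalization $\|\chi_1\|^2 = \tfrac{2\pi m}{\sigma}\int(\omega-\nu)^2M\,d\omega\,d\nu = 1$) yields $\|\partial_\theta^k\bbp f\|_{L^2_{\omega,\nu}}^2 = |\partial_\theta^k f_0|^2+|\partial_\theta^k f_1|^2$ pointwise in $\theta$. Integrating in $\theta$ and using $\sqrt{a^2+b^2}\leq a+b$ for $a,b\geq 0$ closes the estimate.

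For (iii), I use $\partial_\omega\sqrt{M} = -\tfrac{m}{2\sigma}(\omega-\nu)\sqrt{M}$ to compute explicitly
\[
\partial_\omega\chi_0 \;=\; -\tfrac{1}{2}\sqrt{m/\sigma}\,\chi_1, \qquad \partial_\omega\chi_1 \;=\; \sqrt{\tfrac{2\pi m}{\sigma}}\,\sqrt{M}\,\bigl(1-\tfrac{m}{2\sigma}(\omega-\nu)^2\bigr),
\]
and then take $L^2_{\omega,\nu}$ norms. The first gives $\|\partial_\omega\chi_0\|_{L^2_{\omega,\nu}} = \tfrac{1}{2}\sqrt{m/\sigma}$ directly from $\|\chi_1\|=1$; the second reduces, upon expanding the square, to the Gaussian moments $\int(\omega-\nu)^{2j}M\,d\omega\,d\nu = (2j-1)!!\,(\sigma/m)^j/(2\pi)$ for $j=0,1,2$, producing $\|\partial_\omega\chi_1\|_{L^2_{\omega,\nu}}^2 = \tfrac{3m}{4\sigma}$. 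Applying the triangle inequality in $L^2_{\omega,\nu}$ to $\partial_\theta^k\partial_\omega\bbp f = (\partial_\theta^k f_0)\,\partial_\omega\chi_0+(\partial_\theta^k f_1)\,\partial_\omega\chi_1$ — where the coefficients $\partial_\theta^k f_0, \partial_\theta^k f_1$ depend only on $\theta$ and factor out — produces the claimed pointwise-in-$\theta$ bound with implicit constant $\tfrac{\sqrt{3}}{2}$. No step presents a genuine obstacle; the only mild care is in tracking the fourth Gaussian moment that appears when computing $\|\partial_\omega\chi_1\|^2$, where the leading and quadratic contributions cancel and the fourth-order term supplies the surviving $\tfrac{3m}{4\sigma}$.
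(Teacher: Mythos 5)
Your proof is correct and follows essentially the same approach as the paper's: each part reduces to the explicit Gaussian basis functions $\chi_0,\chi_1$, the commutation of $\bbp$ with $\partial_\theta^k$, and standard Gaussian moment identities. The small variations — closing (i) with the exact $L^2$-norm of $\partial_\theta^k\sin$ rather than the $L^\infty$ bound, using the Pythagorean identity in (ii) instead of the triangle inequality, and expressing $\partial_\omega\chi_0$ as $-\tfrac12\sqrt{m/\sigma}\,\chi_1$ in (iii) before applying the triangle inequality rather than expanding the full square as the paper does — are cosmetic and do not change the argument.
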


\begin{proof} (i) We use \eqref{C-3} to obtain
$$\begin{aligned}
\partial^k_\theta S[\sqrt{M} f](\theta,t) & = \int_{\mathbb{T} \times \mathbb{R}^2} \partial_{\theta}^{k} \sin(\theta_* - \theta) \sqrt{M}(\omega_*, \nu_*) f(\theta_*, \omega_*, \nu_*,  t) \,d\theta_* d\omega_* d\nu_* \\
& =  \int_{\mathbb{T} } \partial_{\theta}^{k} \sin(\theta_* - \theta) \left(\int_{\mathbb{R}^2} \sqrt{M}(\omega_*, \nu_*) f(\theta_*, \omega_*, \nu_*,  t) \, d\omega_*  d\nu_* \right)d\theta_*, \\
& = \frac{1}{\sqrt{2\pi}} \int_{\mathbb{T}} \partial_{\theta}^{k} \sin(\theta_* - \theta) f_0 (\theta_*,t) \,d\theta_*.
\end{aligned}$$
This yields
\begin{equation} \label{F-1-1}
\|\partial^k_\theta S[\sqrt{M} f](t)\|_{L^\infty} \leq \frac{1}{\sqrt{2\pi}} \int_{\mathbb{T} } | f_0(\theta_*,t)| \,d\theta_* \leq  \| f_0(t)\|_{L^{2}}.
\end{equation}
On the other hand, we use the periodicity of $\sin (\theta_* - \theta)$ in $\theta$-variable to find
\begin{equation} \label{F-2}
\int_{\mathbb{T} \times \mathbb{R}^2} \partial_{\theta}^{k} \sin(\theta_* - \theta) M(\omega_*,\nu_*) \,d\theta_* d\omega_* d\nu_* = 0.
\end{equation}
Then, the relation \eqref{F-2}, definition relation \eqref{B-20} of $f$ and the initial assumption of $F$ yield
\begin{align}
\begin{aligned} \label{F-3}
\partial^k_\theta S[\sqrt{M} f](\theta,t) & = \int_{\mathbb{T} \times \mathbb{R}^2 } \partial_{\theta}^{k} \sin(\theta_* - \theta) \sqrt{M(\omega_*, \nu_*)} f(\theta_*, \omega_*, \nu_*,  t)\, d\theta_* d\omega_* d\nu_* \\
& = \int_{\mathbb{T} \times \mathbb{R}^2 }\partial_{\theta}^{k} \sin(\theta_* - \theta) F(\theta_*, \omega_*, \nu_*, t)\, d\theta_* d\omega_* d\nu_* \\
& \quad - \int_{\mathbb{T} \times \mathbb{R}^2 }\partial_{\theta}^{k} \sin(\theta_* - \theta) M(\omega_*, \nu_*) \,d\theta_* d\omega_* d\nu_* \\
&  =\int_{\mathbb{T} \times \mathbb{R}^2 } \partial_{\theta}^{k} \sin(\theta_* - \theta) F(\theta_*, \omega_*, \nu_*,  t)\, d\theta_* d\omega_* d\nu_* \\
& \leq \int_{\mathbb{T} \times \mathbb{R}^2 } F(\theta_*, \omega_*, \nu_*, t)\, d\theta_* d\omega_* d\nu_* = 1,
\end{aligned}
\end{align}
where we used the conservation of mass. Then, the first desired estimate follows from \eqref{F-1-1} and \eqref{F-3}.  \newline

\noindent (ii) Recall that
\begin{equation} \label{F-4}
\bbp f =  f_0 \chi_0 + f_1 \chi_1, \quad \chi_0 := \sqrt{2\pi M}, \quad \chi_1 :=  \sqrt\frac{2\pi m}{\sigma} (\omega - \nu) \sqrt M.
\end{equation}
Note that for $0 \leq k \leq s$
\[
\partial_\theta^k \bbp f = ( \partial_\theta^k f_0 ) \chi_0 + ( \partial_\theta^k f_1 ) \chi_1,
\]
Then, we use the normalization of $\chi_0$ and $\chi_1$ to see
\[
\|\partial_\theta^k \bbp f\|_{L^2_{\omega,\nu}} \leq |\partial_\theta^k f_0| + |\partial_\theta^k f_1|.
\]
This yields
\[ \|\partial_\theta^k \bbp f\|_{L^2} \leq \|\partial_\theta^k f_0 \|_{L^2} + \|\partial_\theta^k f_1 \|_{L^2}. \]

\vspace{0.5cm}

\noindent (iii)~ It follows from \eqref{B-21} and \eqref{F-4} that
\begin{align}
\begin{aligned} \label{F-5}
& \partial_{\theta}^k \partial_{\omega} \bbp f = \partial_\theta^k f_0 \partial_\omega \chi_0 +  \partial_\theta^k f_1 \partial_\omega \chi_1, \\
& \partial_\omega \chi_0 = -\sqrt{2\pi} \frac{m}{2\sigma} (\omega - \nu) \sqrt{M}, \quad \partial_\omega \chi_1 = \sqrt{\frac{2\pi m}{\sigma}} \sqrt{M} - \frac{m}{2\sigma} \sqrt{\frac{2m \pi}{\sigma}} (\omega - \nu)^2 \sqrt{M}.
\end{aligned}
\end{align}
Now, we use \eqref{F-5} to obtain
\begin{align*}
\begin{aligned}
\|\partial_\theta^k \partial_{\omega}\bbp f\|_{L^2_{\omega,\nu}}^2
&= \int_{\mathbb{R}^2} 2\pi (\frac{m}{2\sigma})^2 (\omega - \nu)^2 M |\partial_{\theta}^k f_0|^2 \,d\omega d\nu  + \int_{\mathbb{R}^2}  \frac{2\pi m}{\sigma} M |\partial_{\theta}^k f_1|^2 \,d\omega d\nu \\
 &+  \int_{\mathbb{R}^2} \frac{2\pi m}{\sigma} (\frac{m}{2\sigma})^2 (\omega - \nu)^4 M |\partial_{\theta}^k f_1|^2 \,d\omega d\nu - \int_{\mathbb{R}^2} 2\frac{2\pi m}{\sigma} \frac{m}{2\sigma} (\omega - \nu)^2 M |\partial_{\theta}^k f_1|^2 \,d\omega d\nu \\
 & = \frac{m}{4\sigma} |\partial_{\theta}^k f_0 |^2 + \frac{3m}{4\sigma} |\partial_{\theta}^k f_1|^2.
\end{aligned}
\end{align*}
Here $\int_{\mathbb{R}^2} \frac{2\pi m}{\sigma} (\frac{m}{2\sigma})^2 (\omega - \nu)^4 M |\partial_{\theta}^k f_1|^2 \,d\omega d\nu=\frac{3m}{4\sigma} |\partial_{\theta}^k f_1|^2$.
This yields the desired result:
\[
\|\partial_\theta^k \bbp f\|_{L^2_{\omega,\nu}}
 \lesssim \sqrt{\frac{m}{\sigma}} \left(|\partial_{\theta}^k f_0| + |\partial_{\theta}^k f_1|\right).
\]
\end{proof}

\subsection{Zeroth-order estimate}
We present the zeroth-order estimate for solution $f$ as follows.

\begin{lemma} \label{L6.2}
 Let $s \geq 1$ and $T >0$ be given. Let $f \in \mathcal{C}([0,T]; H^s(\mathbb{T} \times \mathbb{R}^2))$ be a solution to the equation \eqref{F-1}. Suppose that $\sigma$ satisfies $\sigma \geq Cm\kappa^2$ for sufficiently large constant $C>0$.
 Then, we have
\[
\frac{d}{dt} \|f\|^2_{L^2} + \frac{1}{m} \| (\bbi - \bbp) f\|_{\mu}^2 + \frac{1}{m} \| f_1 \|_{L^{2}}^{2} \lesssim\frac{\kappa}{\sqrt{m\sigma}} \| f_0 \|_{L^2}^2. \]
\end{lemma}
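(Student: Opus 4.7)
The plan is to take the $L^2_{\theta,\omega,\nu}$ inner product of equation \eqref{F-1} with $f$. Since the transport term satisfies $\int_{\bbt\times\bbr^2} \omega (\partial_\theta f) f \,d\theta d\omega d\nu = 0$ by periodicity in $\theta$, I obtain
\[
\frac{1}{2}\frac{d}{dt}\|f\|_{L^2}^2 = \int_\bbt \langle \mathcal{L}_0 f, f\rangle \,d\theta + \int_\bbt \langle \mathcal{L}_1 f, f\rangle \,d\theta + \int_\bbt \langle \mathcal{N}(f,f), f\rangle \,d\theta.
\]
The first term produces the dissipation via Lemma \ref{L3.2}(ii), giving $\frac{\lambda_0}{m}\|(\bbi-\bbp)f\|_{\mu}^2 + \frac{1}{m}\|f_1\|_{L^2}^2$ on the left-hand side. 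The task then reduces to showing that the $\mathcal{L}_1$ and nonlinear contributions can be absorbed into this dissipation (and into the $\|f_0\|_{L^2}^2$ term on the right) under the hypothesis $\sigma \geq Cm\kappa^2$.

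For the linear cross-term, I would exploit the identity $(\omega-\nu)\sqrt{M} = \sqrt{\sigma/(2\pi m)}\,\chi_1$ to rewrite
\[
\int_\bbt \langle \mathcal{L}_1 f, f\rangle \,d\theta = \frac{\kappa}{\sigma}\sqrt{\frac{\sigma}{2\pi m}}\int_\bbt \mathcal{S}[\sqrt{M}f]\, f_1 \,d\theta.
\]
Applying the $L^\infty$ bound $\|\mathcal{S}[\sqrt{M}f]\|_{L^\infty}\lesssim \|f_0\|_{L^2}$ from Lemma \ref{L6.1}(i), followed by Cauchy--Schwarz and Young's inequality, yields a bound of the form $\frac{\kappa}{\sqrt{m\sigma}}(\|f_0\|_{L^2}^2 + \|f_1\|_{L^2}^2)$.

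For the nonlinear term, the contribution involving $\partial_\omega f$ vanishes after integration by parts in $\omega$, since $\mathcal{S}[\sqrt{M}f]$ is independent of $\omega$ and $\int \partial_\omega(f^2)\,d\omega = 0$. The remaining term $\frac{\kappa}{2\sigma}\int \mathcal{S}[\sqrt{M}f](\omega-\nu) f^2$ is handled via Lemma \ref{L6.1}(i) and the elementary estimate $(\omega-\nu)^2 \leq (\sigma/m)\alpha$, which gives
\[
\left|\frac{\kappa}{2\sigma}\int \mathcal{S}[\sqrt{M}f](\omega-\nu)f^2\right| \lesssim \frac{\kappa}{\sqrt{m\sigma}}\|f\|_\mu \|f\|_{L^2}.
\]
Splitting $f = \bbp f + (\bbi-\bbp)f$ and using $\|\bbp f\|_\mu \lesssim \|f_0\|_{L^2} + \|f_1\|_{L^2}$ (from the computation preceding Lemma \ref{L4.1}, or directly from Lemma \ref{L6.1}) together with $\|(\bbi-\bbp)f\|_{L^2} \leq \|(\bbi-\bbp)f\|_\mu$ (since $\alpha \geq 1$), the product $\|f\|_\mu\|f\|_{L^2}$ is dominated by $\|(\bbi-\bbp)f\|_\mu^2 + \|f_0\|_{L^2}^2 + \|f_1\|_{L^2}^2$.

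The main delicacy, and the step where the noise assumption is crucial, is the final absorption. Collecting all error terms gives extra contributions of order $\frac{\kappa}{\sqrt{m\sigma}}\bigl(\|(\bbi-\bbp)f\|_\mu^2 + \|f_1\|_{L^2}^2\bigr)$, while the dissipation produced by $\mathcal{L}_0$ has coefficient of order $1/m$. The condition $\sigma \geq C m\kappa^2$ with $C$ sufficiently large yields $\kappa/\sqrt{m\sigma} \leq 1/(m\sqrt{C})$, which is strictly smaller than the dissipative coefficients, so these terms are absorbed into the left-hand side. What survives on the right-hand side is exactly $\frac{\kappa}{\sqrt{m\sigma}}\|f_0\|_{L^2}^2$, proving the claim. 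The key obstacle throughout is tracking the precise scaling in $m,\kappa,\sigma$ so as to guarantee absorbability; the hypothesis $\sigma \gtrsim m\kappa^2$ is dictated exactly by this balance.
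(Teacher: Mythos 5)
Your proposal is correct and follows essentially the same route as the paper's proof: inner product with $f$, coercivity via Lemma \ref{L3.2}(ii), the identity $(\omega-\nu)\sqrt{M}=\sqrt{\sigma/(2\pi m)}\,\chi_1$ and the $L^\infty$ bound of Lemma \ref{L6.1}(i) for the $\mathcal{L}_1$ term, cancellation of the $\partial_\omega f$ part of $\mathcal{N}$ by integration by parts in $\omega$, and absorption of the error terms into the dissipation under $\sigma\geq Cm\kappa^2$. The only (minor) variation is in the remaining nonlinear piece $\frac{\kappa}{2\sigma}\int\mathcal{S}[\sqrt M f](\omega-\nu)f^2$: the paper decomposes $f=\bbp f+(\bbi-\bbp)f$ first and treats $|\bbp f|^2$, the cross term, and $|(\bbi-\bbp)f|^2$ separately, using the exact identity $\int(\omega-\nu)|\bbp f|^2\,d\omega d\nu=2\sqrt{\sigma/m}\,f_0 f_1$, whereas you apply the weighted Cauchy--Schwarz bound $(\omega-\nu)^2\le(\sigma/m)\alpha$ to get $\|f\|_\mu\|f\|_{L^2}$ and decompose afterwards. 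Both produce the same collection of terms $\|f_0\|_{L^2}^2$, $\|f_1\|_{L^2}^2$, $\|(\bbi-\bbp)f\|_\mu^2$ with prefactor $\kappa/\sqrt{m\sigma}$, so the conclusions agree.
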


\begin{proof}
We multiply equation $\eqref{F-1}_1$ by $f$, and integrate the resulting relation over $\mathbb{T} \times \mathbb{R}^2$  to get
\begin{align}
\begin{aligned} \label{F-6}
\frac12\frac{d}{dt} \|f\|^2_{L^2} + \int_{\mathbb{T} } \langle -{\mathcal L}_0 f, f \rangle \,d\theta  &=  \int_{\mathbb{T} } \langle {\mathcal L}_1 f, f \rangle \,d\theta  + \int_{\mathbb{T} } \langle f, \mathcal{N}(f, f) \rangle \,d\theta.
\end{aligned}
\end{align}

\vspace{0.5cm}

\noindent $\bullet$ (Estimate A):  It follows from Lemma \ref{L3.2} $(ii)$ that
\begin{equation} \label{F-6-1}
\int_{\mathbb{T} } \langle -{\mathcal L}_0 f, f \rangle \,d\theta \geq \frac{\lambda_0}{m} \|(\bbi - \bbp) f\|_{\mu}^2 + \frac{1}{m} \|f_1 \|_{L^{2}}^{2}.
\end{equation}

\noindent $\bullet$ (Estimate B):  We use \eqref{C-3-1} to have
\begin{align} \label{F-7-1}
\begin{aligned}
\langle {\mathcal L}_1 f, f \rangle = \frac{\kappa}{\sigma}  \mathcal{S}[\sqrt{M}f] \langle (\omega - \nu) \sqrt{M}, f \rangle = \frac{\kappa}{\sqrt{2 \pi m\sigma}}  \mathcal{S}[\sqrt{M}f] \langle \chi_1, f \rangle = \frac{\kappa}{\sqrt{2 \pi m\sigma}}  \mathcal{S}[\sqrt{M}f] f_1.
\end{aligned}
\end{align}
Thus, we use \eqref{F-7-1}, H\"older inequality, and Lemma \ref{L6.1} $(i)$ to obtain
\begin{align}\label{F-7}
\begin{aligned}
\Big| \int_{\mathbb{T} } \langle {\mathcal L}_1 f, f \rangle d\theta \Big|&= \frac{\kappa}{\sqrt{2 \pi m\sigma}}\int_\mathbb{T} \mathcal{S}[\sqrt{M} f] f_1 \,d\theta  \leq \frac{\kappa}{\sqrt{m\sigma}}\|\mathcal{S}[\sqrt{M} f]\|_{L^2}\|f_1 \|_{L^2} \\
&  \leq \frac{\kappa}{\sqrt{m\sigma}}\| f_0 \|_{L^2}\| f_1 \|_{L^2} \leq \frac{\kappa}{2\sqrt{m\sigma}}\|(f_0, f_1)\|_{L^2}^2.
\end{aligned}
\end{align}

\noindent $\bullet$ (Estimate C): Direct calculation yields
\begin{align}\label{F-8}
\begin{aligned}
\int_{\mathbb{T} } \langle f, \mathcal{N}(f, f) \rangle \,d\theta &= \frac{\kappa}{2\sigma} \int_{\mathbb{T} \times \mathbb{R}^2} \mathcal{S}[\sqrt{M} f] (\omega - \nu) f^2 \,d\theta d\omega d\nu  - \frac{\kappa}{m}  \int_{\mathbb{T} \times \mathbb{R}^2} \mathcal{S}[\sqrt{M} f]  \partial_{\omega}f f \,d\theta d\omega d\nu  \\
& = \frac{\kappa}{2\sigma} \int_{\mathbb{T} \times \mathbb{R}^2} \mathcal{S}[\sqrt{M} f] (\omega - \nu) f^2 \,d\theta d\omega d\nu\\
& = \frac{\kappa}{2\sigma} \int_{\mathbb{T} \times \mathbb{R}^2} \mathcal{S}[\sqrt{M} f] (\omega - \nu) |\bbp f|^2 \,d\theta d\omega d\nu  \\
& + \frac{\kappa}{\sigma} \int_{\mathbb{T} \times \mathbb{R}^2} \mathcal{S}[\sqrt{M} f] (\omega - \nu) \bbp f\cdot (\bbi - \bbp) f\, d\theta d\omega d\nu \\
&+ \frac{\kappa}{2\sigma} \int_{\mathbb{T} \times \mathbb{R}^2} \mathcal{S}[\sqrt{M} f] (\omega - \nu) |(\bbi - \bbp) f|^2 \,d\theta d\omega d\nu\\
&  =: {\mathcal I}_{41} + {\mathcal I}_{42} + {\mathcal I}_{42}.
\end{aligned}
\end{align}
Next, we estimate ${\mathcal I}_{4i}$, $i = 1,2,3$ as follows. \newline

\noindent $\diamond$ (Estimate on ${\mathcal I}_{41}$): We first note that
\begin{align}
\begin{aligned} \label{F-9}
& \int_{\mathbb{R}^2}(\omega - \nu) |\bbp f|^2 \, d\omega  d\nu  \\
& \hspace{0.5cm} = \int_{\mathbb{R}^2}(\omega - \nu) | f_0 \chi_0 + f_1 \chi_1 |^2 \, d\omega  d\nu \\
& \hspace{0.5cm} = \int_{\mathbb{R}^2}(\omega - \nu) \left( \sqrt{2\pi}  f_0  \sqrt{M} + \sqrt{\frac{2\pi m}{\sigma}}  f_1 (\omega - \nu) \sqrt{M} \right)^2 d\omega  d\nu \\
& \hspace{0.5cm} = \int_{\mathbb{R}^2}(\omega - \nu) \left( 2\pi  f_0^2 M + \frac{2\pi m}{\sigma}  f_1^2 (\omega - \nu)^2 M + 4\pi \sqrt{\frac{m}{\sigma}}  f_0  f_1 (\omega - \nu)M \right)  d\omega  d\nu \\
& \hspace{0.5cm} = 4\pi \sqrt{\frac{m}{\sigma}}  f_0  f_1 \int_{\mathbb{R}^2}(\omega - \nu)^2 M\, d\omega  d\nu \\
& \hspace{0.5cm} = 2\sqrt{\frac{\sigma}{m}}  f_0  f_1.
\end{aligned}
\end{align}
Here we used
\[ \int_{\mathbb{R}^2}(\omega - \nu)^2 M \,d\omega d\nu = \frac{1}{2\pi}\frac{\sigma}{m}. \]
Then, the above estimate \eqref{F-9} implies
\begin{align}
\begin{aligned} \label{F-10}
|{\mathcal I}_{41}| &= \Big| \frac{\kappa}{\sqrt{m \sigma}} \int_{\mathbb{T}} \mathcal{S}[\sqrt{M} f]  f_0  f_1 \,d\theta \Big| \leq \frac{\kappa}{\sqrt{m \sigma}} \|\mathcal{S}[\sqrt{M} f]\|_{L^{\infty}} \| f_0\|_{L^2} \| f_1\|_{L^2} \\
& \leq \frac{\kappa}{\sqrt{m \sigma}}  \| f_0\|_{L^2} \| f_1\|_{L^2} \leq \frac{\kappa}{2\sqrt{m\sigma}}\|( f_0,  f_1)\|_{L^2}^2.
\end{aligned}
\end{align}

\vspace{0.5cm}

\noindent $\diamond$ (Estimate on ${\mathcal I}_{42}$): Note that
\begin{align*}
\begin{aligned}
\langle (\omega - \nu) \bbp f, (\bbi - \bbp) f \rangle & = \sqrt{\frac{\sigma}{m}} \int_{\mathbb{R}^2} \bbp f \cdot \sqrt{\frac{m}{\sigma}} (\omega - \nu) (\bbi - \bbp) f \,d\omega d\nu \\
& \leq \sqrt{\frac{\sigma}{m}} \left( \int_{\mathbb{R}^2} |\bbp f|^2 \,d\omega d\nu \right)^{\frac12} \left( \int_{\mathbb{R}^2} \frac{m}{\sigma} (\omega - \nu) |(\bbi - \bbp) f|^2 \,d\omega d\nu \right)^{\frac12} \\
& \leq \sqrt{\frac{\sigma}{m}} (\sqrt{ |f_0|^2 + | f_1|^2} \| (\bbi - \bbp) f\|_{L_\mu^2}.
\end{aligned}
\end{align*}
This yields
\begin{align}
\begin{aligned} \label{F-11}
|{\mathcal I}_{42}| &\leq \frac{\kappa}{\sqrt{m\sigma}} \| \mathcal{S}[\sqrt{M} f] \|_{L^{\infty}} \int_{\mathbb{T}}  \sqrt{ |f_0|^2 + | f_1|^2}
\| (\bbi - \bbp) f\|_{L_\mu^2}\, d\theta \\
 & \leq \frac{\kappa}{2\sqrt{m\sigma}} \left(  \|(\bbi - \bbp) f\|^2_{\mu} + \| (f_0, f_1) \|_{L^2}^2 \right).
\end{aligned}
\end{align}

\vspace{0.5cm}

\noindent $\diamond$ (Estimate on ${\mathcal I}_{43}$):  By a straightforward computation and Lemma \ref{L6.1} $(i)$, we have
\begin{align}
\begin{aligned} \label{F-12}
|{\mathcal I}_{43}| & =  \left|\frac{\kappa}{2\sigma}\sqrt{\frac{\sigma}{m}} \int_{\mathbb{T}} \mathcal{S}[\sqrt{M} f]\left( \int_{\mathbb{R}^2}\sqrt{\frac{m}{\sigma}}(\omega - \nu) | (\bbi - \bbp) f|^2 \,d\omega  d\nu \right)d\theta \right| \\
& \leq  \frac{\kappa}{2\sqrt{m \sigma}} \int_{\mathbb{T}} |\mathcal{S}[\sqrt{M} f] |\| (\bbi - \bbp) f \|_{\mu}^2 \,d\theta   \leq  \frac{\kappa}{2\sqrt{m \sigma}} \| \mathcal{S}[\sqrt{M} f] \|_{L^{\infty}} \| (\bbi - \bbp) f \|_{\mu}^2 \\
&  \leq  \frac{\kappa}{2\sqrt{m \sigma}} \| (\bbi - \bbp) f \|_{\mu}^2.
\end{aligned}
\end{align}
We  substitute all the above estimates \eqref{F-10}, \eqref{F-11}, \eqref{F-12} for ${\mathcal I}_{4i},~~i=1,2,3$ into \eqref{F-8} to get
\begin{equation}\label{F-13}
\Big| \int_{\mathbb{T} } \langle f, \mathcal{N}(f, f) \rangle \,d\theta \Big| \leq \frac{\kappa}{\sqrt{m \sigma}} \Big( \| (f_0, f_1) \|_{L^2}^2  +
\| (\bbi - \bbp) f \|_{\mu}^2 \Big).
\end{equation}
In \eqref{F-6}, we collect the estimates \eqref{F-6-1},  \eqref{F-7} and \eqref{F-13} to have
\begin{align*}
\begin{aligned}
& \frac12\frac{d}{dt} \|f\|^2_{L^2} + \frac{\lambda_0}{m} \| (\bbi - \bbp) f\|_{\mu}^2 + \frac{1}{m} \|f_1\|_{L^{2}}^{2} \leq \frac{\kappa}{2\sqrt{m \sigma}}
\Big( 3\| (f_0, f_1) \|_{L^2}^2  + 2\| (\bbi - \bbp) f \|_{\mu}^2 \Big),
\end{aligned}
\end{align*}
i.e.,
\[
\frac{d}{dt} \|f\|^2_{L^2} + \left(\frac{2\lambda_0}{m}-\frac{2\kappa}{\sqrt{m \sigma}} \right)\| (\bbi - \bbp) f\|_{\mu}^2 + \left(\frac{2}{m}-\frac{3\kappa}{\sqrt{m \sigma}} \right) \|f_1\|_{L^{2}}^{2} \leq \frac{3\kappa}{\sqrt{m \sigma}} \|f_0\|_{L^2}^2.
\]
By the assumption, we have
\[ \frac{2\kappa}{\sqrt{m \sigma}} \leq  \frac{\lambda_0}{4m} \quad \mbox{and} \quad \frac{3\kappa}{\sqrt{m \sigma}} \leq  \frac{1}{4m}. \]
This yields
\begin{align*}
\begin{aligned}
& \frac{d}{dt} \|f\|^2_{L^2} + \frac{\lambda_0}{m} \| (\bbi - \bbp) f\|_{\mu}^2 + \frac{1}{m} \|f_1\|_{L^{2}}^{2} \lesssim \frac{\kappa}{\sqrt{m \sigma}} \|f_0\|_{L^2}^2.
\end{aligned}
\end{align*}
\end{proof}

\subsection{Higher-order estimates with respect to $\theta$}
In this subsection, we will give the $H_{\theta}^k$-estimates on solution $f$.
\begin{lemma} \label{L6.3}
For $s \geq 1$ and $T >0$, let $f \in \mathcal{C}([0,T]; H^s(\mathbb{T} \times \mathbb{R}^2))$ be a solution to the equation \eqref{F-1}. Suppose that $\sigma$ satisfies $\sigma \geq Cm\kappa^2$ for sufficiently large constant $C>0$. Then, for $1 \leq k \leq s$, we have
$$\begin{aligned}
&\frac{d}{dt}\|\partial_\theta^k f\|_{L^2}^2 + \frac{1}{m} \|\partial_\theta^k (\bbi - \bbp) f\|_{\mu}^2 + \frac1m \|\partial_\theta^k f_1\|_{L^2}^2 \lesssim \frac{\kappa}{\sqrt{m\sigma}}\left(\|(\bbi - \bbp) f\|_{\mu}^2 + \|f_1\|_{L^2}^2 + \|\partial_\theta^k f_0\|_{L^2}^2 \right).
\end{aligned}$$
\end{lemma}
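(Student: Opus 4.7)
The plan is to apply $\partial_\theta^k$ to \eqref{F-1}, pair with $\partial_\theta^k f$ in $L^2(\mathbb{T}\times\mathbb{R}^2)$, and integrate. The transport term $\int \omega\,\partial_\theta^{k+1}f\cdot\partial_\theta^k f\,d\theta d\omega d\nu = \tfrac{1}{2}\int_\mathbb{T}\partial_\theta\!\bigl(\int\omega|\partial_\theta^k f|^2 d\omega d\nu\bigr) d\theta$ vanishes by $\theta$-periodicity. Since $\mathcal{L}_0$ commutes with $\partial_\theta^k$, Lemma \ref{L3.2}(ii) applied to $\partial_\theta^k f$ supplies the coercivity bound
\[
\int_\mathbb{T}\langle -\mathcal{L}_0 \partial_\theta^k f,\partial_\theta^k f\rangle\,d\theta \;\geq\; \frac{\lambda_0}{m}\|(\bbi-\bbp)\partial_\theta^k f\|_\mu^2 + \frac{1}{m}\|\partial_\theta^k f_1\|_{L^2}^2,
\]
which is exactly the dissipation on the LHS of the target inequality.

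For the $\mathcal{L}_1$ contribution, I would exploit the factorization $\mathcal{L}_1 f = \frac{\kappa}{\sqrt{2\pi m\sigma}}\,\mathcal{S}[\sqrt M f]\,\chi_1$, so only the scalar factor $\mathcal{S}[\sqrt M f]$ carries the $\partial_\theta^k$. Transferring the derivatives onto $f_0$ via integration by parts in $\theta_*$ (exactly as in the proof of Lemma \ref{L6.1}(i)) yields $\|\partial_\theta^k\mathcal{S}[\sqrt M f]\|_{L_\theta^\infty}\lesssim\|\partial_\theta^k f_0\|_{L^2}$, and pairing with $\partial_\theta^k f$ (which collapses against $\chi_1$ into $\partial_\theta^k f_1$) gives
\[
\Bigl|\int_\mathbb{T}\langle \partial_\theta^k\mathcal{L}_1 f,\partial_\theta^k f\rangle\,d\theta\Bigr|\;\lesssim\;\frac{\kappa}{\sqrt{m\sigma}}\|\partial_\theta^k f_0\|_{L^2}\|\partial_\theta^k f_1\|_{L^2}.
\]

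The nonlinear term is expanded by Leibniz. The top piece $\ell=k$ mimics the zeroth-order analysis of Lemma \ref{L6.2} line-for-line: the $\tfrac{\kappa}{m}\mathcal{S}[\sqrt M f]\,\partial_\omega\partial_\theta^k f\cdot \partial_\theta^k f$ contribution vanishes after integration by parts in $\omega$, and the $\tfrac{\kappa}{2\sigma}\mathcal{S}[\sqrt M f](\omega-\nu)|\partial_\theta^k f|^2$ piece is decomposed as $\partial_\theta^k f = \bbp\partial_\theta^k f + (\bbi-\bbp)\partial_\theta^k f$ and estimated using $\|\mathcal{S}[\sqrt M f]\|_{L^\infty}\leq 1$ from Lemma \ref{L6.1}(i), reproducing the estimates $\mathcal{I}_{41}$--$\mathcal{I}_{43}$ with $f$ replaced by $\partial_\theta^k f$. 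For the intermediate pieces $\ell<k$, the coefficient bound $\|\partial_\theta^{k-\ell}\mathcal{S}[\sqrt M f]\|_{L^\infty}\leq 1$ (again Lemma \ref{L6.1}(i)), combined with the weighted H\"older estimate $\int(\omega-\nu)^2|g|^2\,d\omega d\nu \leq\tfrac{\sigma}{m}\|g\|_{L^2_\mu}^2$ and integration by parts in $\omega$ for the $\partial_\omega\partial_\theta^\ell f$ term, gives contributions of the form $\tfrac{\kappa}{\sqrt{m\sigma}}\|\partial_\theta^\ell f\|_\mu\|\partial_\theta^k f\|_{L^2}$.

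Combining all contributions and applying Young's inequality, the assumption $\sigma\geq Cm\kappa^2$ with $C$ large makes $\kappa/\sqrt{m\sigma}$ small enough that terms proportional to $\|(\bbi-\bbp)\partial_\theta^k f\|_\mu^2$ and $\|\partial_\theta^k f_1\|_{L^2}^2$ are reabsorbed into the LHS coercivity. The main obstacle I expect is the reassembly of the residual lower-order norms $\|\partial_\theta^\ell f\|_\mu$ ($0\leq \ell<k$) generated by Leibniz into the specific RHS combination $\|(\bbi-\bbp)f\|_\mu^2+\|f_1\|_{L^2}^2+\|\partial_\theta^k f_0\|_{L^2}^2$: one decomposes $\partial_\theta^\ell f = \bbp\partial_\theta^\ell f + (\bbi-\bbp)\partial_\theta^\ell f$, uses the normalization $\|\bbp\partial_\theta^\ell f\|_\mu^2\lesssim |\partial_\theta^\ell f_0|^2+|\partial_\theta^\ell f_1|^2$ coming from the explicit form of $\chi_0,\chi_1$, and then absorbs the intermediate $\|\partial_\theta^\ell(\bbi-\bbp) f\|_\mu$ and $\|\partial_\theta^\ell f_1\|_{L^2}$ pieces into the 0-th order norms $\|(\bbi-\bbp)f\|_\mu^2+\|f_1\|_{L^2}^2$ via the uniform $H^s$-bound of $f$ inherited from $f\in\mathcal{X}(s,T_0;\eta)$, leaving $\|\partial_\theta^k f_0\|_{L^2}^2$ as the only genuinely $k$-th order object on the RHS.
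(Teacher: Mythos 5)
The overall structure—apply $\partial_\theta^k$, pair with $\partial_\theta^k f$, use the coercivity of $\mathcal{L}_0$, Leibniz-expand the nonlinearity, and split each slot into $\bbp$ and $\bbi-\bbp$ pieces—matches the paper's proof. Your treatments of the transport term, the $\mathcal{L}_1$ term, and the top Leibniz piece $\ell = k$ (where $\langle \mathcal{S}[\sqrt M f]\partial_\omega\partial_\theta^k f,\partial_\theta^k f\rangle$ vanishes after integration by parts in $\omega$) are all sound. Your estimate B is in fact slightly sharper than the paper's: you transfer $\partial_\theta^k$ onto $f_0$ to get $\|\partial_\theta^k\mathcal S[\sqrt M f]\|_{L^\infty}\lesssim\|\partial_\theta^k f_0\|_{L^2}$, while the paper uses $\|f_0\|_{L^2}$ and recovers $\|\partial_\theta^k f_0\|_{L^2}$ only at the end via Poincar\'e.

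However, the last step of your proposal is wrong, and this is where the argument must close. You write that the lower-order Leibniz contributions $\|\partial_\theta^\ell(\bbi-\bbp)f\|_\mu$ and $\|\partial_\theta^\ell f_1\|_{L^2}$ for $0<\ell<k$ can be ``absorbed into the 0-th order norms $\|(\bbi-\bbp)f\|_\mu^2+\|f_1\|_{L^2}^2$ via the uniform $H^s$-bound of $f$ inherited from $f\in\mathcal X(s,T_0;\eta)$.'' This is circular: Lemma \ref{L6.3} is exactly the a priori differential inequality that will ultimately prove the $H^s$-bound propagates in time. You cannot assume the bound you are trying to derive. Moreover, even setting aside the circularity, an $H^s$-bound does not convert $\|\partial_\theta^\ell(\bbi-\bbp)f\|_\mu$ into $\|(\bbi-\bbp)f\|_\mu$; those are different weighted norms at different orders.

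The correct mechanism, which is what the paper invokes, is Poincar\'e's inequality in $\theta$ combined with the hypothesis $\sigma\gg m\kappa^2$. Since $\int_\mathbb T f_0\,d\theta=0$ (this follows from $\int_{\mathbb T\times\mathbb R^2}(F-M)=0$) and since $\int_\mathbb T\partial_\theta^\ell g\,d\theta=0$ automatically for any periodic $g$ and $\ell\geq 1$, you have
\[
\|\partial_\theta^\ell f_0\|_{L^2}\lesssim\|\partial_\theta^k f_0\|_{L^2},\qquad
\|\partial_\theta^\ell f_1\|_{L^2}\lesssim\|\partial_\theta^k f_1\|_{L^2},\qquad
\|\partial_\theta^\ell(\bbi-\bbp)f\|_\mu\lesssim\|\partial_\theta^k(\bbi-\bbp)f\|_\mu
\]
for $1\leq\ell\leq k$. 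The intermediate $\theta$-derivatives are thus dominated by the top-order ones, not the zeroth-order ones; the resulting $\frac{\kappa}{\sqrt{m\sigma}}\|\partial_\theta^k(\bbi-\bbp)f\|_\mu^2$ and $\frac{\kappa}{\sqrt{m\sigma}}\|\partial_\theta^k f_1\|_{L^2}^2$ are then absorbed into the LHS dissipation $\frac{\lambda_0}{m}\|\partial_\theta^k(\bbi-\bbp)f\|_\mu^2 + \frac1m\|\partial_\theta^k f_1\|_{L^2}^2$ because $\kappa/\sqrt{m\sigma}\ll 1/m$. Only the $\ell=0$ pieces $\|(\bbi-\bbp)f\|_\mu^2$ and $\|f_1\|_{L^2}^2$, which cannot be beaten down by Poincar\'e, survive on the right together with $\|\partial_\theta^k f_0\|_{L^2}^2$. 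Replace your last paragraph with this argument and the proof closes without any smallness or $H^s$-bound on $f$.
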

\begin{proof}
Let $1 \leq k \leq s$, Then, we take the differential operator $\partial_\theta^k$ to \eqref{F-1} to get
\[
\partial_t \partial_\theta^k f + \omega \partial_\theta^{k+1}f  = {\mathcal L}_0 \partial_\theta^k f + \partial_\theta^k {\mathcal L}_1  f + \partial_\theta^k \Gamma(f,f),
\]
where we have used the communicative property of $\mathcal{L}_0$ and $\partial_{\theta}^{k}$. Then we find
\[ \frac12\frac{d}{dt} \|\partial_\theta^k f\|_{L^2}^2 + \int_\mathbb{T} \langle -{\mathcal L}_0 \partial_\theta^k f,\partial_\theta^k f \rangle \,d\theta \\
=  \int_\mathbb{T} \langle \partial_\theta^k {\mathcal L}_1  f,\partial_\theta^k f \rangle \,d\theta + \int_\mathbb{T} \langle \partial_\theta^k \mathcal{N}(f,f),\partial_\theta^k f \rangle \,d\theta.
\]

\vspace{0.2cm}

\noindent $\bullet$ (Estimate A):  By the coercivity estimate of ${\mathcal L}_0$, we have
\begin{equation*}
\int_\mathbb{T} \langle -{\mathcal L}_0 \partial_\theta^k f,\partial_\theta^k f \rangle \,d\theta  \geq \frac{\lambda_0}{m} \| (\bbi - \bbp)\partial_\theta^k f\|_{\mu}^2 + \frac1m \|\partial_\theta^k f_1 \|_{L^2}^2.
\end{equation*}

\vspace{0.2cm}

\noindent $\bullet$ (Estimates B) By direct estimate, we have
\begin{align*}
\begin{aligned} 
\left| \int_\mathbb{T} \langle \partial_\theta^k {\mathcal L}_1  f,\partial_\theta^k f \rangle \,d\theta \right| &= \left| \frac{\kappa}{\sqrt{2\pi m\sigma}}\int_\mathbb{T} \partial_\theta^k \mathcal{S}[\sqrt{M} f] \partial_\theta^k f_1 \,d\theta  \right| \\
& \leq \frac{\kappa}{\sqrt{m\sigma}}\|\partial_\theta^k \mathcal{S}[\sqrt{M} f]\|_{L^\infty}\|\partial_\theta^k f_1 \|_{L^2} \\
& \leq \frac{\kappa}{\sqrt{m\sigma}}\|f_0\|_{L^2} \|\partial_\theta^k f_1\|_{L^2}  \leq \frac{\kappa}{2\sqrt{m\sigma}}\|(f_0, \partial_\theta^k f_1)\|_{L^2}^2.
\end{aligned}
\end{align*}

\vspace{0.2cm}

\noindent $\bullet$ (Estimate C):  Direct calculation yields
\begin{align*}
\begin{aligned}
\int_\mathbb{T} \langle \partial_\theta^k \mathcal{N}(f,f),\partial_\theta^k f \rangle \,d\theta & = \frac{\kappa}{2\sigma} \int_{\mathbb{T}} \langle \partial_{\theta}^k \big( \mathcal{S}[\sqrt{M} f] (\omega - \nu) f\big), \partial_{\theta}^k f \rangle \,d\theta   \\
&  - \frac{\kappa}{m}  \int_{\mathbb{T}} \langle\partial_{\theta}^k \big( \mathcal{S}[\sqrt{M} f]  \partial_{\omega}f \big), \partial_{\theta}^k f \rangle \,d\theta =: {\mathcal I}_{51} + {\mathcal I}_{52}.
\end{aligned}
\end{align*}
Next, we will estimate ${\mathcal I}_{5i}, i =1, 2$ as follows. \newline

\noindent $\diamond$ (Estimate on ${\mathcal I}_{51}$): We split it into four terms as in Lemma \ref{L6.2}:
\begin{align*}
\begin{aligned}
{\mathcal I}_{51} &= \frac{\kappa}{2\sigma}\sum_{0 \leq \ell \leq k} \binom{k}{\ell} \int_\mathbb{T} \partial_\theta^{k -\ell} (\mathcal{S}[\sqrt{M} f]) \langle (\omega - \nu)\partial_\theta^{\ell} f , \partial_\theta^k f\rangle\,d\theta\cr
&=  \frac{\kappa}{2\sigma}\sum_{0 \leq \ell \leq k} \binom{k}{\ell}\int_\mathbb{T} \partial_\theta^{k -\ell} (\mathcal{S}[\sqrt{M} f] )\left\langle(\omega - \nu) \partial_\theta^{ \ell} \bbp f, \partial_\theta^k \bbp f\right\rangle d\theta \cr
&+  \frac{\kappa}{2\sigma}\sum_{0 \leq \ell \leq k} \binom{k}{\ell}\int_\mathbb{T} \partial_\theta^{k- \ell} (\mathcal{S}[\sqrt{M} f] )\left\langle(\omega - \nu) \partial_\theta^{\ell} \bbp f, \partial_\theta^k (\bbi - \bbp) f\right\rangle d\theta  \cr
&+  \frac{\kappa}{2\sigma}\sum_{0 \leq \ell \leq k} \binom{k}{\ell}\int_\mathbb{T} \partial_\theta^{k -\ell} (\mathcal{S}[\sqrt{M} f]) \left\langle(\omega - \nu) \partial_\theta^{\ell} (\bbi - \bbp) f, \partial_\theta^k  \bbp f\right\rangle d\theta  \cr
&+  \frac{\kappa}{2\sigma}\sum_{0 \leq \ell \leq k} \binom{k}{\ell}\int_\mathbb{T} \partial_\theta^{k -\ell} (\mathcal{S}[\sqrt{M} f] )\left\langle(\omega - \nu) \partial_\theta^{\ell}
 (\bbi - \bbp) f, \partial_\theta^k (\bbi - \bbp) f\right\rangle d\theta \cr
&=: \sum_{j=1}^4 {\mathcal I}_{51}^j.
\end{aligned}
\end{align*}
 Note that
\[
\left\langle(\omega - \nu) \partial_\theta^{\ell} \bbp f, \partial_\theta^k \bbp f\right\rangle = \sqrt{\frac\sigma m}\Big(\partial_\theta^{\ell} f_0
\partial_\theta^k f_1  + \partial_\theta^{\ell} f_1 \partial_\theta^k f_0  \Big), \quad \mbox{for $0 \leq \ell \leq k$.}
\]
This gives
\begin{align}
\begin{aligned} \label{F-17}
|{\mathcal I}_{51}^1| &= \frac{\kappa}{2\sqrt{m\sigma}}\sum_{0 \leq \ell \leq k} \binom{k}{\ell}\int_\mathbb{T} \partial_\theta^{k -\ell} \mathcal{S}[\sqrt{M} f] \left(\partial_\theta^{\ell} f_0  \partial_\theta^k f_1 + \partial_\theta^{\ell}f_1 \partial_\theta^k f_0 \right)\,d\theta \\
&\lesssim \frac{\kappa}{\sqrt{m\sigma}}\sum_{0 \leq \ell \leq k}\|\partial_\theta^{k-\ell} \mathcal{S}[\sqrt{M} f]\|_{L^\infty}\left(\|\partial_\theta^{\ell} f_0 \|_{L^2} \|\partial_\theta^k f_1\|_{L^2} + \|\partial_\theta^{\ell}f_1\|_{L^2} \|\partial_\theta^k f_0 \|_{L^2}\right) \\
&\lesssim \frac{\kappa}{\sqrt{m\sigma}}\sum_{0 \leq \ell \leq k} \|\partial_\theta^\ell(f_0,f_1)\|_{L^2}^2.
\end{aligned}
\end{align}
It follows from Lemma \ref{L6.1} that
\begin{align}
\begin{aligned} \label{F-18}
|{\mathcal I}_{51}^2| &\lesssim \frac{\kappa}{\sqrt{m\sigma}}\sum_{0 \leq \ell \leq k}\|\partial_\theta^{k - \ell} \mathcal{S}[M^\frac12 f] \|_{L^\infty}\int_\mathbb{T} \|\partial_\theta^{\ell} \bbp f\|_{L^2}\|\partial_\theta^k (\bbi - \bbp) f\|_{L^2_\mu} \,d\theta \\
&\lesssim \frac{\kappa}{\sqrt{m\sigma}} \left( \sum_{0 \leq \ell \leq k}  \|\partial_\theta^\ell(f_0,f_1)\|_{L^2}^2 + \|\partial_\theta^k (\bbi - \bbp) f\|_{\mu}^2 \right).
\end{aligned}
\end{align}
Similarly, we obtain
\begin{align}
\begin{aligned} \label{F-18-1}
|{\mathcal I}_{51}^3| &\lesssim \frac{\kappa}{\sqrt{m\sigma}} \left( \sum_{0 \leq \ell \leq k}  \|\partial_\theta^k (f_0,f_1)\|_{L^2}^2 + \|\partial_\theta^{\ell} (\bbi - \bbp) f\|_{\mu}^2 \right),\cr
|{\mathcal I}_{51}^4| &\lesssim \frac{\kappa}{\sqrt{m\sigma}}\sum_{0 \leq \ell \leq k}\| \partial_\theta^\ell(\bbi - \bbp) f\|_{\mu}^2. 
\end{aligned}
\end{align}
We now combine \eqref{F-17}, \eqref{F-18}, and \eqref{F-18-1}  to obtain
\begin{equation*}
|{\mathcal I}_{51}| \leq \frac{C\kappa}{\sqrt{m\sigma}} \left(\sum_{0 \leq \ell \leq k} \|\partial_\theta^\ell(f_0,f_1)\|_{L^2}^2 +  \|\partial_\theta^{\ell} (\bbi - \bbp) f\|_{\mu}^2 \right).
\end{equation*}
\noindent $\diamond$ (Estimate on ${\mathcal I}_{52}$): Similarly as before, we decompose $\mathcal{I}_{52}$ into four terms:
\begin{align}
\begin{aligned} \label{F-21}
{\mathcal I}_{52} &=-\frac{\kappa}{m} \sum_{0 \leq \ell < k} \binom{k}{\ell}\int_\mathbb{T} \left\langle \partial_\theta^{k- \ell} (\mathcal{S}[\sqrt M f])\partial_\theta^{ \ell} \partial_\omega f, \partial_\theta^{k} f \right\rangle d\theta\cr
&=-\frac{\kappa}{m} \sum_{0 \leq \ell < k} \binom{k}{\ell}\int_\mathbb{T} \left\langle \partial_\theta^{k- \ell} (\mathcal{S}[\sqrt M f]) \partial_\theta^{ \ell} \partial_\omega (\bbi - \bbp) f, \partial_\theta^{k} (\bbi - \bbp) f \right\rangle d\theta\cr
&-\frac{\kappa}{m} \sum_{0 \leq \ell < k} \binom{k}{\ell}\int_\mathbb{T} \left\langle  \partial_\theta^{k- \ell} (\mathcal{S}[\sqrt M f]) \partial_\theta^{ \ell} \partial_\omega \bbp f, \partial_\theta^{k}  (\bbi - \bbp) f \right\rangle d\theta\cr
&-\frac{\kappa}{m} \sum_{0 \leq \ell < k} \binom{k}{\ell}\int_\mathbb{T} \left\langle  \partial_\theta^{k- \ell} (\mathcal{S}[\sqrt M f]) \partial_\theta^{ \ell} \partial_\omega
(\bbi - \bbp) f, \partial_\theta^{k} \bbp f \right\rangle d\theta\cr
&-\frac{\kappa}{m} \sum_{0 \leq \ell < k} \binom{k}{\ell}\int_\mathbb{T} \left\langle \partial_\theta^{k- \ell} (\mathcal{S}[\sqrt M f]) \partial_\theta^{ \ell} \partial_\omega \bbp f, \partial_\theta^{k} \bbp f \right\rangle d\theta \\
& =: \sum_{j=1}^4 {\mathcal I}_{52}^j,
\end{aligned}
\end{align}
where we used the fact that for any function $h$, it holds:
\[
\int_\mathbb{T} \left\langle  \mathcal{S}[\sqrt M f] \partial_{\omega} \partial_\theta^k h, \partial_\theta^k h \right\rangle d\theta = 0.
\]

\noindent We use Lemma \ref{L3.2} to have
\begin{align}
\begin{aligned} \label{F-22}
| {\mathcal I}_{52}^1| & = \left| \frac{\kappa}{m} \sqrt{\frac{m}{\sigma}} \sum_{0 \leq \ell < k} \binom{k}{\ell}\int_\mathbb{T}  \partial_\theta^{k -\ell} \mathcal{S}[\sqrt M f] \left\langle \sqrt{\frac{\sigma}{m}} \partial_\theta^{\ell}\partial_\omega (\bbi - \bbp) f, \partial_\theta^{k} (\bbi - \bbp) f \right\rangle d\theta \right| \\
& \leq \frac {\kappa}{\sqrt{m \sigma}} \sum_{0 \leq \ell < k} \binom{k}{\ell}\int_\mathbb{T}  |\partial_\theta^{k- \ell} \mathcal{S}[\sqrt M f]| \|\partial_\theta^{\ell} (\bbi - \bbp) f\|_{L_\mu^2} \|\partial_\theta^{k} (\bbi - \bbp) f\|_{L_\mu^2} d\theta \\
&\leq \frac {\kappa}{\sqrt{m \sigma}} \sum_{0 \leq \ell < k} \binom{k}{\ell}\|\partial_\theta^{k -\ell} \mathcal{S}[\sqrt{M} f]\|_{L^\infty} \|\partial_\theta^{\ell} (\bbi - \bbp) f\|_{\mu} \|\partial_\theta^{k} (\bbi - \bbp) f\|_{\mu} \\
&\lesssim \frac {\kappa}{\sqrt{m \sigma}} \sum_{0 \leq \ell \leq k} \|\partial_\theta^{\ell} (\bbi - \bbp) f\|_{\mu}^2.
\end{aligned}
\end{align}

\noindent Note that
\begin{equation} \label{F-23}
 {\mathcal I}_{52}^2 = \frac{\kappa}{m} \sum_{0 \leq \ell < k} \binom{k}{\ell}\int_\mathbb{T} \partial_\theta^{k-\ell} \mathcal{S}[\sqrt M f]  \left\langle \partial_\theta^{\ell} \bbp f, \partial_\theta^k \partial_{\omega} (\bbi - \bbp) f \right\rangle d\theta,
\end{equation}
and
\begin{align}
\begin{aligned} \label{F-24}
& \left|\left\langle \partial_\theta^{\ell} \bbp f, \partial_\theta^k \partial_{\omega} (\bbi - \bbp) f \right\rangle \right| = \left|\sqrt{\frac{m}{\sigma}} \int_{\mathbb{R}^2} \partial_\theta^{\ell} \bbp f \cdot \sqrt{\frac{\sigma}{m}} \partial_\theta^k \partial_{\omega} (\bbi - \bbp) f \,d\omega d\nu\right| \\
& \hspace{1cm}  \leq \sqrt{\frac{m}{\sigma}} \|\partial_\theta^{\ell} \bbp f\|_{L_{\omega, \nu}^2} \| \partial_\theta^k (\bbi - \bbp) f\|_{L_{\mu}^{2}}  \leq  \sqrt{\frac{m}{\sigma}} \Big(|\partial_\theta^{\ell} f_0 | + |\partial_\theta^{\ell} f_1| \Big) \| \partial_\theta^k (\bbi - \bbp) f\|_{L_{\mu}^{2}}.
\end{aligned}
\end{align}
We now combine \eqref{F-23} and \eqref{F-24} to obtain
\begin{align}
\begin{aligned} \label{F-25}
| {\mathcal I}_{52}^2|  &\lesssim
\frac {\kappa}{\sqrt{m \sigma}} \sum_{0 \leq \ell < k} \|\partial_\theta^{k-\ell} \mathcal{S}[\sqrt{M} f]\|_{L^\infty} \Big( \|\partial_\theta^{\ell}  f_0  \|_{L^2} + \|\partial_\theta^{\ell} f_1 \|_{L^2} \Big)  \|  \partial_\theta^k (\bbi - \bbp) f\|_{\mu}  \\
& \lesssim \frac {\kappa}{\sqrt{m \sigma}} \left(\sum_{0 \leq \ell < k} \|\partial_\theta^{\ell}( f_0 , f_1 )\|_{L^2}^2 +  \|  \partial_\theta^k (\bbi - \bbp) f\|_{\mu}^2 \right).
\end{aligned}
\end{align}

\noindent  Similar to the estimate of ${\mathcal I}_{52}^2$, we have
\begin{equation} \label{F-26}
 |{\mathcal I}_{52}^3|  \lesssim \frac {\kappa}{\sqrt{m \sigma}} \left(\sum_{0 \leq \ell < k} \|\partial_\theta^{\ell} (\bbi - \bbp) f\|_{\mu}^2 +  \|  \partial_\theta^k (f_0, f_1 )\|_{L^2}^2 \right).
\end{equation}

\noindent For the estimate of ${\mathcal I}_{52}^4$,
we use Lemma \ref{L6.1} to see that
\begin{align*}
\begin{aligned} 
 & \left|\left\langle\partial_\theta^{ \ell}\partial_\omega \bbp f, \partial_\theta^{k} \bbp f \right\rangle\right| \leq \|\partial_\theta^{\ell}\partial_\omega \bbp f\|_{L^2} \| \partial_\theta^{k} \bbp f\|_{L^2} \quad \mbox{and} \\
 & \| \partial_\theta^{\ell} \partial_{\omega} \bbp f\|_{L^2}   \lesssim \sqrt{\frac{m}{\sigma}} \left(|\partial_\theta^{\ell} f_0 | + |\partial_\theta^{\ell} f_1| \right).
\end{aligned}
\end{align*}
This yields
\begin{equation} \label{F-29}
 |{\mathcal I}_{52}^4|  \lesssim \frac {\kappa}{\sqrt{m \sigma}} \sum_{0 \leq \ell < k} \|\partial_\theta^{\ell}(f_0, f_1 )\|_{L^2}^2.
\end{equation}
In \eqref{F-21}, we combine all estimates \eqref{F-22}, \eqref{F-25}, \eqref{F-26}, and \eqref{F-29} to get
\begin{equation*}
|{\mathcal I}_{52}| \lesssim \frac{\kappa}{\sqrt{m\sigma}} \sum_{0 \leq \ell \leq k} \left(\|\partial_\theta^\ell(f_0,f_1)\|_{L^2}^2 + \|\partial_\theta^\ell (\bbi - \bbp) f\|_{\mu}^2 \right).
\end{equation*}
We then collect all the above estimates to find
\begin{align*}
\begin{aligned}
&\frac{d}{dt}\|\partial_\theta^k f\|_{L^2}^2 + \frac{2\lambda_0}{m} \|\partial_\theta^k (\bbi - \bbp) f\|_{\mu}^2 + \frac2m \|\partial_\theta^k f_1\|_{L^2}^2\cr
&\hspace{3cm} \lesssim \frac{\kappa}{\sqrt{m\sigma}} \sum_{0 \leq \ell \leq k}  \left( \|\partial_\theta^\ell(f_0,f_1)\|_{L^2}^2 + \|\partial_\theta^\ell (\bbi - \bbp) f\|_{\mu}^2 \right) \quad \mbox{for $1 \leq k \leq s$.}
\end{aligned}
\end{align*}
Since $\int_\bbt f_0\,d\theta = 0$, we can use the Poincar\'e inequality, $\|f_0\|_{L^2} \lesssim \|\partial_\theta f_0\|_{L^2}$. This together with the assumption $\sigma \gg m\kappa^2$ concludes
\[ \frac{d}{dt}\|\partial_\theta^k f\|_{L^2}^2 + \frac{\lambda_0}{m} \|\partial_\theta^k (\bbi - \bbp) f\|_{\mu}^2 + \frac1m \|\partial_\theta^k f_1\|_{L^2}^2 
 \lesssim \frac{\kappa}{\sqrt{m\sigma}} \left(\|(\bbi - \bbp) f\|_{\mu}^2 + \|f_1\|_{L^2}^2 + \|\partial_\theta^k f_0\|_{L^2}^2 \right), \]
for $1 \leq k \leq s$.
\end{proof}
To estimate the R.H.S. in Lemmas \ref{L6.1} and \ref{L6.2}, we use the macro-micro decomposition. We derive the hyperbolic-parabolic system which the macro components $f_0$ and $f_1$ should satisfy. We multiply the equation \eqref{F-1}$_1$ by $\chi_0$ and $\chi_1$  respectively, and then integrate the resulting one with respect to $(\omega,\nu)$ to find
\begin{align}
\begin{aligned} \label{F-31}
& \partial_t f_0 + \sqrt{\frac{\sigma}{m}} \partial_{\theta} f_1 + \partial_{\theta} f_0 \int_{\mathbb{R}}\nu g(\nu)\,d\nu + \int_{\mathbb{R}^2} \nu \chi_0 \partial_{\theta} (\bbi - \bbp) f \,d\omega d\nu = 0, \\
& \partial_t f_1 +\sqrt{\frac{\sigma}{m}}  \partial_{\theta}f_0 + \partial_{\theta}f_1 \int_{\mathbb{R}}\nu g(\nu)\,d\nu  +\frac{1}{m} f_1 - \frac{1}{\sqrt{2\pi }} \frac{\kappa}{\sqrt{m \sigma}} \mathcal{S}[\sqrt{M} f] \\
& \hspace{1cm} -\frac{\kappa}{\sqrt{m\sigma}} \mathcal{S}[\sqrt{M} f]f_0    +  \int_{\mathbb{R}^2}\nu \chi_1 \partial_{\theta} (\bbi - \bbp) f \,d\omega d\nu\\
& \hspace{1cm}  + \int_{\mathbb{R}^2}  (\omega - \nu)\chi_1 \partial_{\theta} (\bbi - \bbp) f \,d\omega d\nu = 0.
\end{aligned}
\end{align}
Then, for $0\leq k\leq s-1$, we apply $\partial_{\theta}^{k}$ to equations \eqref{F-31} to obtain
\begin{align}
\begin{aligned} \label{F-32}
& \partial_t  \partial^{k}_{\theta}f_0 + \sqrt{\frac{\sigma}{m}} \partial^{k+1}_{\theta} f_1 + \partial^{k+1}_{\theta} f_0 \int_{\mathbb{R}}\nu g(\nu)\,d\nu   + \int_{\mathbb{R}^2} \nu \chi_0 \partial_{\theta}^{k+1} (\bbi - \bbp) f \,d\omega d\nu = 0, \\
& \partial_t  \partial^{k}_{\theta}f_1 +\sqrt{\frac{\sigma}{m}}  \partial^{k+1}_{\theta}f_0 +   \partial^{k+1}_{\theta}f_1 \int_{\mathbb{R}}\nu g(\nu)\,d\nu  + \frac{1}{m} \partial^{k}_{\theta}f_1  - \frac{1}{\sqrt{2\pi }} \frac{\kappa}{\sqrt{m \sigma}}  \partial^{k}_{\theta}\mathcal{S}[\sqrt{M} f]   \\
&\hspace{1cm} - \frac{\kappa}{\sqrt{m\sigma}} \partial^{k}_{\theta}(\mathcal{S}[\sqrt{M} f]f_0) +  \int_{\mathbb{R}^2} \nu\chi_1   \partial^{k+1}_{\theta}(\bbi - \bbp) f \,d\omega d\nu\\
&\hspace{1cm} +\int_{\mathbb{R}^2} (\omega - \nu)\chi_1  \partial^{k+1}_{\theta} (\bbi - \bbp) f \,d\omega d\nu = 0.
\end{aligned}
\end{align}
Note that the similar idea is used for the study of the collisional kinetic equations \cite{Guo04}, see also \cite{Kawa83} for the hyperbolic-parabolic dissipation argument.

\begin{lemma} \label{L6.4}
For $s \geq 1$ and $T >0$, let $f \in \mathcal{C}([0,T]; H^s(\mathbb{T} \times \mathbb{R}^2))$ be a solution to the equation \eqref{F-1}. Suppose that $\|g\|_{\nu} < \infty$ and $\sigma$ satisfies $\sigma > C \max\{\kappa, m \|g\|_{\nu}^{2} \}$ for sufficiently large constant $C>0$. Then, for $0 \leq k \leq s-1$, we have
\[ \frac{1}{\sqrt{m\sigma}} \frac{d}{dt}  \int_{\mathbb{T} }\partial^{k}_{\theta}f_1 \partial^{k+1}_{\theta}f_0 \,d\theta + \frac1m \|\partial^{k+1}_{\theta}f_0\|_{L^2}^2 \lesssim \frac1m \|\partial^{k+1}_{\theta}f_1\|_{L^2}^2 + \frac{1}{m^2 \sigma} \|\partial_{\theta}^{k}f_1\|_{L^2}^2 + \frac{1}{m}  \|\partial_{\theta}^{k+1} (\bbi - \bbp) f\|_{\mu}^2.
\]
\end{lemma}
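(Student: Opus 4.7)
The strategy is a Kawashima-type hyperbolic-parabolic estimate: differentiate the cross product $\int_{\mathbb{T}} \partial_\theta^k f_1 \partial_\theta^{k+1} f_0 \,d\theta$ in time and use the streaming coupling in the macro system \eqref{F-32} to generate the missing dissipation for $\partial_\theta^{k+1} f_0$, whose coercivity is not supplied by $\mathcal{L}_0$ (only $f_1$ and $(\bbi - \bbp)f$ components are dissipated by Lemma \ref{L3.2}).

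First I would write
\[
\frac{d}{dt}\int_{\mathbb{T}} \partial_\theta^k f_1 \, \partial_\theta^{k+1} f_0 \,d\theta = \underbrace{\int_{\mathbb{T}} \partial_t\partial_\theta^k f_1 \cdot \partial_\theta^{k+1} f_0 \,d\theta}_{=:\,(A)} + \underbrace{\int_{\mathbb{T}} \partial_\theta^k f_1 \cdot \partial_t \partial_\theta^{k+1} f_0 \,d\theta}_{=:\,(B)},
\]
and substitute the second equation of \eqref{F-32} into $(A)$ and the $\partial_\theta$-derivative of the first equation into $(B)$. In $(A)$ the streaming term $-\sqrt{\sigma/m}\partial_\theta^{k+1} f_0$ produces the main dissipation $-\sqrt{\sigma/m}\|\partial_\theta^{k+1} f_0\|_{L^2}^2$, which after multiplication by $1/\sqrt{m\sigma}$ yields exactly the desired $-\frac{1}{m}\|\partial_\theta^{k+1} f_0\|_{L^2}^2$. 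In $(B)$ the terms carrying $\partial_\theta^{k+2}$ are regularized by integration by parts in $\theta$; in particular, $-\sqrt{\sigma/m}\int \partial_\theta^k f_1 \partial_\theta^{k+2} f_1 \,d\theta = \sqrt{\sigma/m}\|\partial_\theta^{k+1} f_1\|_{L^2}^2$, furnishing the $\frac{1}{m}\|\partial_\theta^{k+1} f_1\|_{L^2}^2$ on the RHS after normalization.

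The remaining terms are controlled by Cauchy--Schwarz and Young's inequality, with weights chosen so that $\|\partial_\theta^{k+1} f_0\|_{L^2}^2$ contributions are absorbed into the LHS dissipation $\frac{1}{m}\|\partial_\theta^{k+1} f_0\|_{L^2}^2$. Specifically: the $\frac{1}{m}\partial_\theta^k f_1$ term from $(A)$, treated with the Young weight $\varepsilon = \sqrt{m\sigma}/4$, produces exactly $\frac{1}{m^2\sigma}\|\partial_\theta^k f_1\|_{L^2}^2$; the two $\int \nu g(\nu)\,d\nu$ cross terms (one from each equation) give $\|g\|_\nu^2/\sigma$-multiples of $\|\partial_\theta^{k+1} f_1\|_{L^2}^2$, absorbed by $\sigma \geq Cm\|g\|_\nu^2$; the microscopic moment from $(B)$ is handled via $\int \nu^2 \chi_0^2 \,d\omega d\nu \leq \|g\|_\nu^2$, and the two microscopic moments from $(A)$ use $\int \nu^2 \chi_1^2 \,d\omega d\nu \leq \|g\|_\nu^2$ and $\int(\omega-\nu)^2 \chi_1^2 \,d\omega d\nu \lesssim \sigma/m$ respectively (the latter producing the $\frac{1}{m}\|\partial_\theta^{k+1}(\bbi - \bbp)f\|_\mu^2$ directly, the former invoking $\sigma \geq Cm\|g\|_\nu^2$); and the two nonlinear $\mathcal{S}[\sqrt{M} f]$ terms, expanded via Leibniz, are bounded using $\|\partial_\theta^{k-\ell}\mathcal{S}[\sqrt{M}f]\|_{L^\infty}\leq 1$ from Lemma \ref{L6.1} (i) together with the Poincar\'e inequality $\|\partial_\theta^\ell f_0\|_{L^2} \lesssim \|\partial_\theta^{k+1} f_0\|_{L^2}$, valid for $\ell \leq k+1$ because $\int_{\mathbb{T}} f_0 \,d\theta = 0$ follows from conservation of mass; this gives $\frac{\kappa}{m\sigma}\|\partial_\theta^{k+1} f_0\|_{L^2}^2$, absorbed by $\sigma \geq C\kappa$.

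The main obstacle is the careful bookkeeping of the Young weights so that each error term lands in exactly one of the three allowed categories $\frac{1}{m}\|\partial_\theta^{k+1} f_1\|^2$, $\frac{1}{m^2\sigma}\|\partial_\theta^k f_1\|^2$, or $\frac{1}{m}\|\partial_\theta^{k+1}(\bbi - \bbp)f\|_\mu^2$, and that every absorption into the LHS dissipation $\frac{1}{m}\|\partial_\theta^{k+1} f_0\|_{L^2}^2$ is achievable only under the hypotheses $\sigma \geq C\kappa$ and $\sigma \geq Cm\|g\|_\nu^2$; the optimal choice in each Young application is dictated by equating the $\|\partial_\theta^{k+1} f_0\|^2$ coefficient with $\frac{1}{4m}$, and a direct calculation then verifies that the complementary coefficient on the other side is precisely of the promised form.
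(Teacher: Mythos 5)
Your proposal is correct and takes essentially the same Kawashima-type cross-product route as the paper: the paper multiplies $\eqref{F-32}_2$ by $\partial_\theta^{k+1}f_0$, integrates over $\mathbb{T}$, and then substitutes $\eqref{F-32}_1$ for $\partial_t\partial_\theta^k f_0$ after one integration by parts in $\theta$, which is exactly your $(A)+(B)$ splitting, with the streaming terms producing $\sqrt{\sigma/m}\,(\|\partial_\theta^{k+1}f_0\|_{L^2}^2-\|\partial_\theta^{k+1}f_1\|_{L^2}^2)$. One small economy you missed: the two $\int_{\bbr}\nu g(\nu)\,d\nu$ cross terms coming from the two macro equations cancel exactly in the paper's rearrangement \eqref{F-33}, so they never need to be absorbed via $\sigma\geq Cm\|g\|_{\nu}^{2}$; your direct bound for them still closes under the stated hypotheses, so this is only a cosmetic difference.
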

\begin{proof} We multiply $\eqref{F-32}_2$ by $\partial^{k+1}_{\theta}f_0$ and integrate the resulting equation with respect to $\theta$  to obtain
\begin{align*}
\begin{aligned}
&\frac{d}{dt} \int_{\mathbb{T}} \partial^{k}_{\theta}f_1 \partial^{k+1}_{\theta}f_0 \,d\theta + \int_{\mathbb{T}} \partial^{k+1}_{\theta}f_1 \partial_{t}\partial^{k}_{\theta}f_0 \,d\theta + \sqrt{\frac{\sigma}{m}} \|\partial^{k+1}_{\theta}f_0\|_{L^2}^2  \\
& \hspace{1cm} + \int_{\mathbb{T} \times \bbr} \nu g(\nu)  \partial^{k+1}_{\theta}f_1\partial^{k+1}_{\theta}f_0 \,d\theta d\nu +\frac{1}{m}  \int_{\mathbb{T} }\partial^{k}_{\theta}f_1\partial^{k+1}_{\theta}f_0\,d\theta \\
& \hspace{1cm} - \frac{1}{\sqrt{2\pi }} \frac{\kappa}{\sqrt{m \sigma}} \int_{\mathbb{T}} \partial^{k}_{\theta}\mathcal{S}[\sqrt{M} f]\partial^{k+1}_{\theta}f_0 \,d\theta - \frac{\kappa}{\sqrt{m\sigma}} \int_{\mathbb{T}} \partial^{k}_{\theta}(\mathcal{S}[\sqrt{M} f]f_0) \partial^{k+1}_{\theta}f_0 \,d\theta  \\
& \hspace{1cm} + \int_{\mathbb{T} \times \bbr^2}  \nu \chi_1  \partial^{k+1}_{\theta} (\bbi - \bbp) f \partial^{k+1}_{\theta}f_0 \,d\theta d\omega d\nu   \\
& \hspace{1cm} + \int_{\mathbb{T} \times \bbr^2} (\omega - \nu) \chi_1  \partial^{k+1}_{\theta} (\bbi - \bbp) f \partial^{k+1}_{\theta}f_0 \,d\theta d\omega d\nu  = 0.
\end{aligned}
\end{align*}
Then, we further use $\eqref{F-32}_1$ to obtain
\begin{align}
\begin{aligned} \label{F-33}
&\frac{d}{dt}  \int_{\mathbb{T} }\partial^{k}_{\theta}f_1 \partial^{k+1}_{\theta}f_0 \,d\theta+\sqrt{\frac{\sigma}{m}}   (\|\partial^{k+1}_{\theta}f_0\|_{L^2}^2-\|\partial^{k+1}_{\theta}f_1\|_{L^2}^2)\\
& \hspace{1cm} = - \frac{1}{m}  \int_{\mathbb{T} }\partial^{k}_{\theta}f_1\partial^{k+1}_{\theta}f_0\,d\theta + \frac{1}{\sqrt{2\pi }} \frac{\kappa}{\sqrt{m \sigma}} \int_{\mathbb{T} } \partial^{k}_{\theta}\mathcal{S}[\sqrt{M} f]\partial^{k+1}_{\theta}f_0\,d\theta\\
&  \hspace{1cm}  + \frac{\kappa}{\sqrt{m\sigma}} \int_{\mathbb{T} }\partial^{k}_{\theta}(\mathcal{S}[\sqrt{M} f]f_0)\partial^{k+1}_{\theta}f_0\,d\theta  + \int_{\mathbb{T} \times \bbr^2} \nu \chi_0 \partial^{k+1}_{\theta} (\bbi - \bbp) f \partial^{k+1}_{\theta}f_1 \,d\omega d\nu d\theta \\
&  \hspace{1cm}  -  \int_{\mathbb{T} \times \bbr^2}  \nu \chi_1  \partial^{k+1}_{\theta} (\bbi - \bbp) f \partial^{k+1}_{\theta}f_0 \,d\theta d\omega d\nu \\
 &  \hspace{1cm}  - \int_{\mathbb{T} \times \bbr^2} (\omega - \nu) \chi_1  \partial^{k+1}_{\theta} (\bbi - \bbp) f \partial^{k+1}_{\theta}f_0 \,d\theta d\omega d\nu =: \sum_{i=1}^{6} {\mathcal I}_{6i}.
\end{aligned}
\end{align}
Below, we estimate the terms ${\mathcal I}_{6i}, i = 1, \cdots, 6$ in \eqref{F-33}. \newline

\noindent $\bullet$ (Estimate on ${\mathcal I}_{6i},~i=1,2,3$):  It follows from Lemma \ref{L6.1} $(i)$ that
\begin{align*}
\begin{aligned}
|{\mathcal I}_{61}| &\leq \frac1m \|\partial_{\theta}^{k} f_1\|_{L^2} \|\partial_{\theta}^{k+1} f_0\|_{L^2} \leq \frac{1}{8} \sqrt{\frac{\sigma}{m}} \|\partial^{k+1}_{\theta}f_0\|_{L^2}^2 + \frac{2}{m\sqrt{m\sigma}} \|\partial_{\theta}^{k}f_1\|_{L^2}^2, \\
|{\mathcal I}_{62}| &\leq \frac{1}{\sqrt{2\pi }} \frac{\kappa}{\sqrt{m \sigma}}\left| \int_{\mathbb{T} } \partial^{k}_{\theta}\mathcal{S}[\sqrt{M} f]\partial^{k+1}_{\theta}f_0\,d\theta\right| \lesssim  \frac{\kappa}{\sqrt{m \sigma}}\|f_0\|_{L^2}\|\partial_{\theta}^{k+1}f_0\|_{L^2} \\
& \lesssim \frac{\kappa}{\sqrt{m\sigma}}\sum_{\ell=0}^{k+1}\|\partial_{\theta}^{\ell}f_0\|_{L^2}^2, \\
|{\mathcal I}_{63}| &\leq \frac{\kappa}{\sqrt{m\sigma}} \left|\int_{\mathbb{T} }\partial^{k}_{\theta}(\mathcal{S}[\sqrt{M} f]f_0)\partial^{k+1}_{\theta}f_0\,d\theta\right|
\lesssim \frac{\kappa}{\sqrt{m\sigma}}\sum_{\ell=0}^{k+1}\|\partial_{\theta}^{\ell}f_0\|_{L^2}^2.
\end{aligned}
\end{align*}

\noindent $\bullet$ (Estimate of ${\mathcal I}_{6i},~i=4,5,6$):
We use H\"older and Young's inequalities to get
\begin{align*}
\begin{aligned}
|{\mathcal I}_{64}| &=  \left| \int_{\mathbb{T} }  \partial^{k+1}_{\theta}f_1 \Big( \int_{\mathbb{R}^2} \nu \chi_0  \partial^{k+1}_{\theta} (\bbi - \bbp)f\,d\omega d\nu \Big) d\theta \right| \\
& \leq \int_{\bbt} |\partial^{k+1}_{\theta}f_1| \Big(\int_{\bbr^2} \nu^2 \chi_0^2 \,d\omega d\nu\Big)^{\frac{1}{2}} \Big(\int_{\bbr^2}  |\partial^{k+1}_{\theta} (\bbi - \bbp)f|^2\,d\omega d\nu \Big)^{\frac{1}{2}} d\theta \\
& \leq  \|g(\nu)\|_{\nu} \int_{\mathbb{T}} |\partial^{k+1}_{\theta}f_1|  \|\partial_{\theta}^{k+1} (\bbi - \bbp) f\|_{L_{\mu}^2} \,d\theta \\
& \leq \|g(\nu)\|_{\nu} \|\partial_{\theta}^{k+1}f_1\|_{L^2} \|\partial_{\theta}^{k+1} (\bbi - \bbp) f\|_{\mu} \\
& \leq \frac{1}{8} \sqrt{\frac{\sigma}{m}} \|\partial^{k+1}_{\theta}f_1\|_{L^2}^2 + 2\sqrt{\frac{m}{\sigma}}  \|g\|_{\nu}^2  \|\partial_{\theta}^{k+1} (\bbi - \bbp) f\|_{\mu}^2.
\end{aligned}
\end{align*}
Similarly, we also find
\begin{align*}
\begin{aligned}
|{\mathcal I}_{65}|  \leq \frac{1}{8} \sqrt{\frac{\sigma}{m}} \|\partial^{k+1}_{\theta}f_0\|_{L^2}^2 + 2\sqrt{\frac{m}{\sigma}} \|g\|_{\nu}^2 \|\partial_{\theta}^{k+1}  (\bbi - \bbp) f\|_{\mu}^2.
\end{aligned}
\end{align*}
At last, we have
\begin{align*}
\begin{aligned}
|{\mathcal I}_{66}| &= \left| \int_{\mathbb{T} } \partial^{k+1}_{\theta}f_0 \Big(\int_{\mathbb{R}\times \mathbb{R}}  (\omega - \nu) \chi_1  \partial^{k+1}_{\theta} (\bbi - \bbp) f\,d\omega d\nu \Big) d\theta \right|\\
& \leq \sqrt{\frac{\sigma}{m}} \int_{\bbt} |\partial^{k+1}_{\theta}f_0| \Big(\int_{\bbr^2} \chi_1^2 \,d\omega d\nu\Big)^{\frac{1}{2}} \Big(\int_{\bbr^2}  | \frac{m}{\sigma}(\omega - \nu)^2 \partial^{k+1}_{\theta} (\bbi - \bbp)f|^2 \,d\omega d\nu\Big)^{\frac{1}{2}} d\theta \\
& \leq \sqrt{\frac{\sigma}{m}} \int_{\mathbb{T}}  |\partial^{k+1}_{\theta}f_0|   \| \partial_{\theta}^{k+1} (\bbi - \bbp) f\|_{L_\mu^2}\,d\theta  \leq  \sqrt{\frac{\sigma}{m}}\|\partial_{\theta}^{k+1}f_0\|_{L^2} \|\partial_{\theta}^{k+1} (\bbi - \bbp) f\|_{\mu} \\
&\leq \frac{1}{8} \sqrt{\frac{\sigma}{m}} \|\partial^{k+1}_{\theta}f_0\|_{L^2}^2 + 2\sqrt{\frac{\sigma}{m}} \|\partial_{\theta}^{k+1} (\bbi - \bbp) f\|_{\mu}^2.
\end{aligned}
\end{align*}
It follows from the above estimates that
\begin{align}
\begin{aligned} \label{F-34}
&\frac{d}{dt}  \int_{\mathbb{T} }\partial^{k}_{\theta}f_1 \partial^{k+1}_{\theta}f_0 \,d\theta + \sqrt{\frac{\sigma}{m}}   (\|\partial^{k+1}_{\theta}f_0\|_{L^2}^2 - 2\|\partial^{k+1}_{\theta}f_1\|_{L^2}^2) \\
& \hspace{1cm} \lesssim \frac{\kappa}{\sqrt{m\sigma}}\sum_{\ell=0}^{k+1}\|\partial_{\theta}^{\ell}f_0\|_{L^2}^2 + \frac{1}{m\sqrt{m\sigma}} \|\partial_{\theta}^{k}f_1\|_{L^2}^2 + \left( \sqrt{\frac{\sigma}{m}} + \sqrt{\frac{m}{\sigma}} \|g\|_{\nu}^2 \right) \|\partial_{\theta}^{k+1} (\bbi - \bbp) f\|_{\mu}^2.
\end{aligned}
\end{align}
Now, we multiply \eqref{F-34} by $1/\sqrt{m\sigma}$ to obtain
\begin{align*}
\begin{aligned}
&\frac{1}{\sqrt{m\sigma}} \frac{d}{dt}  \int_{\mathbb{T} }\partial^{k}_{\theta}f_1 \partial^{k+1}_{\theta}f_0 \,d\theta + \frac1m  (\|\partial^{k+1}_{\theta}f_0\|_{L^2}^2 - 2\|\partial^{k+1}_{\theta}f_1\|_{L^2}^2) \\
& \hspace{1cm} \lesssim \frac{\kappa}{m\sigma}\sum_{\ell=0}^{k+1}\|\partial_{\theta}^{\ell}f_0\|_{L^2}^2 + \frac{1}{m^2\sigma} \|\partial_{\theta}^{k}f_1\|_{L^2}^2 + \left( \frac{1}{m} + \frac{1}{\sigma} \|g\|_{\nu}^2 \right) \|\partial_{\theta}^{k+1} (\bbi - \bbp) f\|_{\mu}^2.
\end{aligned}
\end{align*}
We use Poincar\'e inequality and the assumption on $\sigma$:
\[ \sigma > C \max\{\kappa, m \|g\|_{\nu}^{2} \}, \]
to obtain that for any $0 \leq k \leq s$,
\[ \frac{1}{\sqrt{m\sigma}} \frac{d}{dt}  \int_{\mathbb{T} }\partial^{k}_{\theta}f_1 \partial^{k+1}_{\theta}f_0 \,d\theta + \frac1m \|\partial^{k+1}_{\theta}f_0\|_{L^2}^2 
 \lesssim \frac1m \|\partial^{k+1}_{\theta}f_1\|_{L^2}^2 + \frac{1}{m^2 \sigma} \|\partial_{\theta}^{k}f_1\|_{L^2}^2 + \frac{1}{m}  \|\partial_{\theta}^{k+1} (\bbi - \bbp) f\|_{\mu}^2.
\]
This yields the desired estimate.
\end{proof}

\begin{proposition} \label{P6.1}
For $s \geq 1$ and $T >0$, let $f \in \mathcal{C}([0,T]; H^s(\mathbb{T} \times \mathbb{R}^2))$ be a solution to the equation \eqref{F-1}. Suppose that $\|g\|_{\nu} < \infty$ and $\sigma$ satisfies $\sigma > C \max\{m\kappa^2, \kappa, m^{-1}, m \|g\|_{\nu}^{2} \}$ for sufficiently large constant $C>0$.  Then, for $0 \leq \ell \leq s-1$, there exist positive constants $C_{0}$ and $C_{0,1}$ such that
\begin{align} \label{e-theta}
\begin{aligned}
& \frac{d}{dt} \Big\{ C_{0}\|(\partial_{\theta}^{\ell}f, \partial_{\theta}^{\ell + 1}f)\|^2_{L^2} + \frac{1}{\sqrt{m\sigma}} \int_{\mathbb{T}} \partial_{\theta}^{\ell }f_1 \partial_{\theta}^{\ell  +1}f_0 \,d\theta \Big\} \\
& \hspace{1cm} + \frac{C_{0,1}}{m} \Big\{ \| \big( \partial_{\theta}^{\ell }(\bbi - \bbp)f, \partial_{\theta}^{\ell  +1} (\bbi - \bbp)f \big) \|_{\mu}^2 +  \|(\partial_{\theta}^{\ell }, \partial_{\theta}^{\ell +1}) (f_0, f_1)\|_{L^2}^{2} \Big\} \leq 0.
\end{aligned}
\end{align}
where
\[ C_{0}\|(\partial_{\theta}^{\ell}f, \partial_{\theta}^{\ell + 1}f)\|^2_{L^2} + \frac{1}{\sqrt{m\sigma}} \int_{\mathbb{T}} \partial_{\theta}^{\ell }f_1 \partial_{\theta}^{\ell  +1}f_0 \,d\theta\simeq \|(\partial_{\theta}^{\ell}f, \partial_{\theta}^{\ell + 1}f)\|^2_{L^2}.\]

\end{proposition}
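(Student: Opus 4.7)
The plan is to construct a Lyapunov-type functional that combines the tangential energy $\|(\partial_\theta^\ell f,\partial_\theta^{\ell+1}f)\|_{L^2}^2$ with the Kawashima-type cross term $(m\sigma)^{-1/2}\int_\mathbb{T}\partial_\theta^\ell f_1\,\partial_\theta^{\ell+1}f_0\,d\theta$. The role of the cross term is to recover the missing dissipation on the macroscopic component $f_0$, for which the coercivity of $\mathcal{L}_0$ supplies nothing; the hyperbolic--parabolic structure of the macro system \eqref{F-32} propagates information between $f_0$ and $f_1$, and Lemma \ref{L6.4} is exactly the quantitative expression of this propagation.

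First I would apply Lemma \ref{L6.3} at orders $k=\ell$ and $k=\ell+1$ (falling back on Lemma \ref{L6.2} when $\ell=0$) and add the resulting inequalities. This produces on the left-hand side the positive dissipation $\frac{\lambda_0}{m}\|(\partial_\theta^\ell(\bbi-\bbp)f,\partial_\theta^{\ell+1}(\bbi-\bbp)f)\|_\mu^2+\frac{1}{m}\|(\partial_\theta^\ell f_1,\partial_\theta^{\ell+1}f_1)\|_{L^2}^2$ and, on the right, a factor $\kappa/\sqrt{m\sigma}$ multiplying $\|(\bbi-\bbp)f\|_\mu^2+\|f_1\|_{L^2}^2+\|\partial_\theta^\ell f_0\|_{L^2}^2+\|\partial_\theta^{\ell+1}f_0\|_{L^2}^2$. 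Under $\sigma\geq Cm\kappa^2$ one has $\kappa/\sqrt{m\sigma}\ll 1/m$, so the lower-order micro contributions on the right are absorbed into the LHS dissipation, and only the two macroscopic pieces $\|\partial_\theta^\ell f_0\|_{L^2}^2$ and $\|\partial_\theta^{\ell+1}f_0\|_{L^2}^2$ remain problematic. To kill these I would add $\varepsilon$ times Lemma \ref{L6.4} at $k=\ell$ for a small $\varepsilon>0$; this yields the crucial dissipation $\frac{\varepsilon}{m}\|\partial_\theta^{\ell+1}f_0\|_{L^2}^2$ and produces on its right only $f_1$- and $(\bbi-\bbp)f$-type terms that are already dissipated in the previous step and hence absorbable for $\varepsilon$ small.

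To close the loop I would invoke conservation of mass (Lemma \ref{L2.1}), which gives $\int_\mathbb{T} f_0(\theta,t)\,d\theta=\sqrt{2\pi}\int_{\mathbb{T}\times\mathbb{R}^2}\sqrt{M}\,f\,d\theta\,d\omega\,d\nu=0$, so that the Poincaré inequality yields $\|\partial_\theta^\ell f_0\|_{L^2}\lesssim\|\partial_\theta^{\ell+1}f_0\|_{L^2}$; the dissipation recovered above thus also controls $\partial_\theta^\ell f_0$ and absorbs the last $f_0$ terms from the first step. Setting $\mathcal{E}_\ell:=C_0\|(\partial_\theta^\ell f,\partial_\theta^{\ell+1}f)\|_{L^2}^2+\frac{\varepsilon}{\sqrt{m\sigma}}\int_\mathbb{T}\partial_\theta^\ell f_1\,\partial_\theta^{\ell+1}f_0\,d\theta$, the Cauchy--Schwarz bound $|\int\partial_\theta^\ell f_1\,\partial_\theta^{\ell+1}f_0\,d\theta|\leq\|\partial_\theta^\ell f_1\|_{L^2}\|\partial_\theta^{\ell+1}f_0\|_{L^2}\lesssim\|(\partial_\theta^\ell f,\partial_\theta^{\ell+1}f)\|_{L^2}^2$ (thanks to Lemma \ref{L6.1}) gives the equivalence $\mathcal{E}_\ell\simeq\|(\partial_\theta^\ell f,\partial_\theta^{\ell+1}f)\|_{L^2}^2$ once $C_0$ is taken large relative to $(m\sigma)^{-1/2}$.

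The main obstacle is the simultaneous calibration of $\varepsilon$ and $C_0$: $\varepsilon$ must be small enough that the parasitic terms $\varepsilon m^{-1}\|\partial_\theta^{\ell+1}f_1\|_{L^2}^2$, $\varepsilon(m^2\sigma)^{-1}\|\partial_\theta^\ell f_1\|_{L^2}^2$ and $\varepsilon m^{-1}\|\partial_\theta^{\ell+1}(\bbi-\bbp)f\|_\mu^2$ produced by Lemma \ref{L6.4} are dominated by $C_0$ times the first-step dissipation, while $C_0$ must be large enough both to enforce the equivalence of $\mathcal{E}_\ell$ and to make the residual $C_0\lambda_0/m$-weighted dissipation swallow the $\varepsilon$-weighted remnants. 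This balancing hinges on the standing hypothesis $\sigma\geq C\max\{m\kappa^2,\kappa,1/m,m\|g\|_\nu^2\}$, which supplies all the smallness needed in Lemmas \ref{L6.2}--\ref{L6.4} and in the absorptions above, and ultimately produces the strictly positive constant $C_{0,1}$ in the stated inequality.
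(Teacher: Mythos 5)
Your proposal reproduces the paper's argument almost exactly: add the micro-dissipation estimates of Lemmas \ref{L6.2}--\ref{L6.3} at orders $\ell$ and $\ell+1$, add the Kawashima cross-term estimate of Lemma \ref{L6.4} at order $\ell$ to recover dissipation on $\partial_\theta^{\ell+1}f_0$, invoke $\int_{\mathbb T}f_0\,d\theta=0$ and Poincar\'e to control the lower-order $f_0$ pieces, and balance the Lyapunov functional under $\sigma\gg\max\{m\kappa^2,\kappa,1/m,m\|g\|_\nu^2\}$. The only cosmetic difference is that you weight the Lemma \ref{L6.4} contribution by a small $\varepsilon$ while the paper weights the Lemmas \ref{L6.2}--\ref{L6.3} contributions by a large $C_0$; dividing your functional through by $\varepsilon$ recovers the paper's normalization with the coefficient $(m\sigma)^{-1/2}$ on the cross term exactly as stated.
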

\begin{proof}
It follows from Lemma \ref{L6.2}, Lemma \ref{L6.3} with $k=1$, and Lemma \ref{L6.4} with $k=0$ that
\begin{align}
\begin{aligned} \label{F-36}
 & \frac{d}{dt} \|f\|^2_{L^2} + \frac{1}{m} \| (\bbi - \bbp) f\|_{\mu}^2 + \frac{1}{m} \|f_1\|_{L^{2}}^{2} \lesssim \frac{\kappa}{\sqrt{m\sigma}} \|f_0\|_{L^2}^2, \\
 & \frac{d}{dt} \|\partial_{\theta}f\|^2_{L^2} + \frac{1}{m} \|\partial_{\theta} (\bbi - \bbp) f\|_{\mu}^2 + \frac{1}{m} \|\partial_{\theta}f_1\|_{L^{2}}^{2} \cr
 &\qquad \lesssim \frac{\kappa}{\sqrt{m\sigma}}\left(\|(\bbi - \bbp) f\|_{\mu}^2 + \|f_1\|_{L^2}^2 + \|\partial_{\theta}f_0\|_{L^2}^2 \right), \\
 &\frac{1}{\sqrt{m\sigma}} \frac{d}{dt} \int_{\mathbb{T}} f_1 \partial_{\theta}f_0 \,d\theta + \frac1m  \|\partial_{\theta}f_0\|_{L^2}^2\cr
 &\qquad  \lesssim \frac{1}{m}  \|\partial_{\theta} (\bbi - \bbp) f\|_{\mu}^2 + \frac1m \|\partial_{\theta}f_1\|_{L^2}^2 + \frac{1}{m^2\sigma}\| f_1\|_{L^2}^2.
\end{aligned}
\end{align}
Now, we multiply \eqref{F-36}$_1$ and \eqref{F-36}$_2$ with a constant $C_{0} >0$ large enough such that
\[ C_{0}\|(f, \partial_{\theta}f)\|^2_{L^2} + \frac{1}{\sqrt{m\sigma}} \int_{\mathbb{T}} f_1 \partial_{\theta}f_0\, d\theta \simeq \|(f, \partial_{\theta}f)\|^2_{L^2}, \]
then we add all the resulting relations to \eqref{F-36}$_3$ to get
\begin{align*}
\begin{aligned}
 & \frac{d}{dt} \left\{ C_{0}\|(f, \partial_{\theta}f)\|^2_{L^2} + \frac{1}{\sqrt{m\sigma}} \int_{\mathbb{T}} f_1 \partial_{\theta}f_0\, d\theta \right\} \\
 & \hspace{1cm} + \frac{C_{0} - C}{m}  \big\| \big( (\bbi - \bbp)f, \partial_{\theta} (\bbi - \bbp)f \big) \big\|_{\mu}^2  + \frac1m  \|\partial_{\theta}f_0\|_{L^2}^2 + \frac{C_{0}  -C}{m} \|(f_1, \partial_{\theta}f_1)\|_{L^{2}}^{2}  \\
 & \hspace{1cm} \leq \frac{C_0 C\kappa}{\sqrt{m\sigma}} \|(f_0, \partial_{\theta}f_0)\|_{L^2}^2,
\end{aligned}
\end{align*}
due to our assumption on $\sigma$. We next use Poincar\'e's inequality and $\sigma \gg m\kappa^2$ to obtain that there exists a constant $C_{0,1}>0$ such that

\begin{align}
\begin{aligned} \label{F-37}
& \frac{d}{dt} \left\{ C_{0}\|(f, \partial_{\theta}f)\|^2_{L^2} + \frac{1}{\sqrt{m\sigma}} \int_{\mathbb{T}} f_1 \partial_{\theta}f_0 \,d\theta \right\} \\
& \hspace{1cm} + \frac{C_{0,1}}{m} \Big\{  \big\| \big( (\bbi - \bbp)f, \partial_{\theta} (\bbi - \bbp)f \big) \big\|_{\mu}^2 +  \|(f_0, \partial_{\theta}f_0)\|_{L^2} + \|(f_1, \partial_{\theta}f_1)\|_{L^{2}}^{2}\Big\}   \leq 0.
\end{aligned}
\end{align}
Now we take Lemma \ref{L6.3} with $k =1 $ and $k=2$, Lemma \ref{L6.4} with $k=1$, and choose $C_{0}>0$ large enough to get
\begin{align*}
\begin{aligned}
 & \frac{d}{dt} \Big\{ C_{0}\|(\partial_{\theta}f, \partial_{\theta}^{2}f)\|^2_{L^2} + \frac{1}{\sqrt{m\sigma}} \int_{\mathbb{T}} \partial_{\theta}f_1 \partial_{\theta}^{2}f_0 d\theta \Big\} \\
 & \hspace{1cm} + \frac{C_{0} -C}{m}  \big\| \big( \partial_{\theta} (\bbi - \bbp)f, \partial_{\theta}^{2} (\bbi - \bbp) f\big) \big\|_{\mu}^2  + \frac1m  \|\partial_{\theta}^{2} f_0\|_{L^2}^2 + \frac{C_{0}  - C}{m} \|( \partial_{\theta}f_1, \partial_{\theta}^{2}f_1)\|_{L^{2}}^{2}  \\
 & \hspace{1cm} \leq \frac{C_{0}C\kappa}{\sqrt{m \sigma}} \|(\partial_{\theta}f_0, \partial_{\theta}^{2} f_0)\|_{L^2}^2.
\end{aligned}
\end{align*}
We use analogous estimates to \eqref{F-37} to get that there exists a positive constant $C_{0,1}$ such that
\begin{align*}
\begin{aligned}
& \frac{d}{dt} \Big\{ C_{0}\|(\partial_{\theta}f, \partial_{\theta}^{2}f)\|^2_{L^2} + \frac{1}{\sqrt{m\sigma}} \int_{\mathbb{T}} \partial_{\theta}f_1 \partial_{\theta}^{2}f_0 d\theta \Big\} \\
& \hspace{1cm} + \frac{C_{0,1}}{m} \Big\{ \big\| \big( \partial_{\theta}(\bbi - \bbp)f, \partial_{\theta}^{2} (\bbi - \bbp)f \big) \big\|_{\mu}^2 +  \|(\partial_{\theta}, \partial_{\theta}^{2}) (f_0, f_1)\|_{L^2}\Big\}   \leq 0.
\end{aligned}
\end{align*}
Repeating the above analysis, we have
\begin{align*}
\begin{aligned}
& \frac{d}{dt} \Big\{ C_{0}\|(\partial_{\theta}^{\ell}f, \partial_{\theta}^{\ell + 1}f)\|^2_{L^2} + \frac{1}{\sqrt{m\sigma}} \int_{\mathbb{T}} \partial_{\theta}^{\ell }f_1 \partial_{\theta}^{\ell  +1}f_0 \,d\theta \Big\} \\
& \hspace{1cm} + \frac{C_{0,1}}{m} \Big\{ \big\| \big( \partial_{\theta}^{\ell }(\bbi - \bbp)f, \partial_{\theta}^{\ell  +1} (\bbi - \bbp)f \big) \big\|_{\mu}^2 +  \|(\partial_{\theta}^{\ell }, \partial_{\theta}^{\ell +1}) (f_0, f_1)\|_{L^2}^{2} \Big\} \leq 0,\\
&C_{0}\|(\partial_{\theta}^{\ell}f, \partial_{\theta}^{\ell + 1}f)\|^2_{L^2} + \frac{1}{\sqrt{m\sigma}} \int_{\mathbb{T}} \partial_{\theta}^{\ell }f_1 \partial_{\theta}^{\ell  +1}f_0 \,d\theta\simeq \|(\partial_{\theta}^{\ell}f, \partial_{\theta}^{\ell + 1}f)\|^2_{L^2}, \quad \mbox{for $0 \leq \ell \leq s-1$}.
\end{aligned}
\end{align*}
This concludes our desired result.
\end{proof}

\subsection{Mixed-type estimates}
Next, we derive mixed type estimates. We apply $(\bbi - \bbp)$ to equation \eqref{F-1} and use Lemma \ref{L3.2} $(i)$ to get
\begin{align}
\begin{aligned} \label{F-39}
& \partial_t (\bbi - \bbp) f + \omega \partial_{\theta} (\bbi - \bbp) f  = {\mathcal L}_0 (\bbi - \bbp) f + \mathcal{N}(f, (\bbi - \bbp) f) \\
& \hspace{1cm} + \frac{\kappa}{2\sigma} \mathcal{S}[\sqrt{M}f][\omega- \nu, \bbp]f  - \frac{\kappa}{m} \mathcal{S}[\sqrt{M}f] [\partial_{\omega}, \bbp]f  + \partial_{\theta} [\bbp, \omega]f,
\end{aligned}
\end{align}
where $[\cdot, \cdot]$ denotes the commutator operator:
\[ [A, B]:= AB - BA. \]
Now we apply $\partial_{\theta}^i \partial_{\omega}^j$ with $1 \leq i+j \leq s$ to equation \eqref{F-39} to obtain
\begin{align}
\begin{aligned} \label{F-40}
& \partial_t \partial_{\theta}^i \partial_{\omega}^j (\bbi - \bbp) f  + \omega \partial_{\theta}^{i+1} \partial_{\omega}^j (\bbi - \bbp) f    \\
& \hspace{0.5cm} = -\partial_{\theta}^i [\partial_{\omega}^j, \omega \partial_{\theta}] (\bbi - \bbp) f + {\mathcal L}_0 \partial_{\theta}^i \partial_{\omega}^j (\bbi - \bbp) f  +  \partial_{\theta}^i [\partial_{\omega}^j, \mathcal{L}_0] (\bbi - \bbp) f  + \partial_{\theta}^i \partial_{\omega}^j \mathcal{N}(f, (\bbi - \bbp) f)  \\
& \hspace{0.5cm} + \frac{\kappa}{2\sigma} \partial_{\theta}^i \partial_{\omega}^j  \big( \mathcal{S}[\sqrt{M}f][\omega- \nu, \bbp]f\big)  - \frac{\kappa}{m} \partial_{\theta}^i \partial_{\omega}^j  \big( \mathcal{S}[\sqrt{M}f] [\partial_{\omega}, \bbp]f \big) + \partial_{\theta}^{i+1} \partial_{\omega}^j [\bbp, \omega]f.
\end{aligned}
\end{align}
Before proceeding the mixed type estimates, we first give some technical lemmas with respect to the communication operator $[\cdot , \cdot]$ and $\chi_0$, $\chi_1$ as follows.
\begin{lemma}\label{L5.6}
 Let $h = h(\theta, \omega)$ be any smooth function. Then, for $j \geq 1$, we have
\begin{align*}
\begin{aligned}
& (i)\quad  [\partial_{\omega}^j, \omega \partial_{\theta}] h = j \partial_{\theta} \partial_{\omega}^{j-1} h. \\
& (ii)\quad  [(\omega-\nu)^2, \partial_{\omega}^j] h
=  - j(j-1) \partial_{\omega}^{j-2}h - 2j (\omega - \nu) \partial_{\omega}^{j-1}h. \\
& (iii)\quad  \partial_{\theta}^{\ell}\partial_{\omega}^j  [\omega - \nu, \bbp] h = \partial_{\theta}^{\ell} h_1 \partial_{\omega}^{j}\big( (\omega - \nu)\chi_1 \big) - \sqrt{\frac{\sigma}{m}} \partial_{\theta}^{\ell} h_1 \partial_{\omega}^{j}  \chi_0 \\
& \hspace{3.4cm}-\big\langle \chi_1, (\omega - \nu)\partial_{\theta}^{\ell} (\bbi - \bbp)h \big\rangle \partial_{\omega}^{j}  \chi_1. \\
& (iv)\quad  \partial_{\theta}^{\ell}\partial_{\omega}^j  [\nu, \bbp] h = \partial_{\theta}^{\ell}h_0  \partial_{\omega}^{j} (\nu \chi_0) + \partial_{\theta}^{\ell}h_1  \partial_{\omega}^{j}(\nu \chi_1) \\
& \hspace{2.8cm} - \partial_{\theta}^{\ell} h_0  \partial_{\omega}^{j}\chi_0 \int_{\bbr} \nu g(\nu) d\nu - \partial_{\theta}^{\ell}h_1  \partial_{\omega}^{j}\chi_1 \int_{\bbr} \nu g(\nu) d\nu \\
&\hspace{2.8cm} - \langle\chi_0, \nu \partial_{\theta}^{\ell}(\bbi - \bbp) h \rangle  \partial_{\omega}^{j}\chi_0 - \langle\chi_1, \nu \partial_{\theta}^{\ell}(\bbi - \bbp) h \rangle  \partial_{\omega}^{j}\chi_1. \\
& (v) \quad \partial_{\theta}^{\ell}\partial_{\omega}^j  [\partial_{\omega}, \bbp] h = -\frac{m}{2\sigma}  \partial_{\theta}^{\ell} h_1 \partial_{\omega}^{j}\big( (\omega - \nu)\chi_1 \big) + \frac{1}{2} \sqrt{\frac{m}{\sigma}}  \partial_{\theta}^{\ell} h_1 \partial_{\omega}^{j} \chi_0 \\
& \hspace{2.8cm} - \big\langle \chi_0, \partial_{\omega} \partial_{\theta}^{\ell} (\bbi - \bbp)h \big\rangle \partial_{\omega}^{j}  \chi_0 -  \big\langle \chi_1, \partial_{\omega}\partial_{\theta}^{\ell} (\bbi - \bbp)h \big\rangle \partial_{\omega}^{j}  \chi_1.
\end{aligned}
\end{align*}
\end{lemma}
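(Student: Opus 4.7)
My plan is to verify (i)--(v) one at a time by direct computation, exploiting the explicit forms $\chi_0 = \sqrt{2\pi M}$, $\chi_1 = \sqrt{2\pi m/\sigma}(\omega-\nu)\sqrt{M}$, and the fact that in $\bbp h = h_0 \chi_0 + h_1 \chi_1$ the coefficients $h_0(\theta), h_1(\theta)$ are independent of $\omega$ and $\nu$, so that $\partial_\theta^\ell$ and $\partial_\omega^j$ distribute cleanly.

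Identities (i) and (ii) are pure Leibniz expansions. For (i), $\partial_\omega^j(\omega \partial_\theta h) = \omega \partial_\omega^j \partial_\theta h + j \partial_\omega^{j-1}\partial_\theta h$ since $\partial_\omega \omega = 1$ and $\partial_\omega^k \omega = 0$ for $k \geq 2$. For (ii), only the $k = 0, 1, 2$ terms contribute in the Leibniz expansion of $\partial_\omega^j((\omega-\nu)^2 h)$, using $\partial_\omega(\omega-\nu)^2 = 2(\omega-\nu)$ and $\partial_\omega^2(\omega-\nu)^2 = 2$, which gives exactly the stated coefficients $-2j$ and $-j(j-1)$ after taking the commutator.

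For (iii) and (iv), I will expand $\bbp(\psi h)$ for $\psi = \omega - \nu$ (resp.\ $\psi = \nu$) via the decomposition $h = \bbp h + (\bbi - \bbp)h$. The two key algebraic identities I need are
\begin{equation*}
(\omega-\nu)\chi_0 = \sqrt{\sigma/m}\,\chi_1, \qquad \langle \chi_1, (\omega-\nu)\chi_1\rangle = 0,
\end{equation*}
the second of which holds by the odd symmetry of $(\omega-\nu)^3 M$ in $\omega-\nu$ at fixed $\nu$. These give $\langle \chi_0, (\omega-\nu)\bbp h\rangle = \sqrt{\sigma/m}\,h_1$ and $\langle \chi_1, (\omega-\nu)\bbp h\rangle = \sqrt{\sigma/m}\,h_0$, so subtracting $\bbp((\omega-\nu) h)$ from $(\omega-\nu) \bbp h = \sqrt{\sigma/m}\,h_0 \chi_1 + h_1(\omega-\nu)\chi_1$ yields (iii). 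For (iv) the same steps apply with $\psi = \nu$, but now $\langle \chi_i, \nu \chi_i\rangle = \int \nu g(\nu)\,d\nu$ (since each $\chi_i^2$ integrates in $\omega$ to a $\nu$-density proportional to $g$), producing the two extra subtracted terms involving $\int \nu g(\nu)\,d\nu$. Finally I apply $\partial_\theta^\ell \partial_\omega^j$, which passes scalar-wise through $h_0, h_1, \langle \chi_i, \cdot\rangle$ and lands only on the $\omega$-dependent basis functions.

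For (v) I first compute, using $\partial_\omega \sqrt{M} = -(m/(2\sigma))(\omega-\nu)\sqrt{M}$,
\begin{equation*}
\partial_\omega \chi_0 = -\tfrac{1}{2}\sqrt{m/\sigma}\,\chi_1, \qquad \partial_\omega \chi_1 = \sqrt{m/\sigma}\,\chi_0 - \tfrac{m}{2\sigma}(\omega-\nu)\chi_1.
\end{equation*}
Writing $[\partial_\omega,\bbp] h = (\bbi - \bbp)\partial_\omega \bbp h - \bbp \partial_\omega (\bbi - \bbp) h$, the second piece immediately furnishes the last two terms on the right of (v). For the first piece I compute $\partial_\omega \bbp h = \sqrt{m/\sigma}\,h_1 \chi_0 - \tfrac{1}{2}\sqrt{m/\sigma}\,h_0 \chi_1 - \tfrac{m}{2\sigma}h_1 (\omega-\nu)\chi_1$, then use the identity $\bbp((\omega-\nu)\chi_1) = \sqrt{\sigma/m}\,\chi_0$ (from (iii)) to obtain $\bbp\partial_\omega \bbp h = \tfrac{1}{2}\sqrt{m/\sigma}(h_1 \chi_0 - h_0 \chi_1)$. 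Subtraction gives $(\bbi - \bbp)\partial_\omega \bbp h = \tfrac{1}{2}\sqrt{m/\sigma}\,h_1 \chi_0 - \tfrac{m}{2\sigma}h_1 (\omega-\nu)\chi_1$, matching the first two terms. Applying $\partial_\theta^\ell \partial_\omega^j$ completes the argument.

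The main bookkeeping obstacle is (v): the operator $\partial_\omega$ takes $\mbox{span}\{\chi_0,\chi_1\}$ outside itself (because of the $(\omega-\nu)\chi_1$ term), so $\bbp \partial_\omega \bbp \neq \partial_\omega \bbp$ and the cancellation $\sqrt{m/\sigma}\,h_1 \chi_0 - \tfrac{1}{2}\sqrt{m/\sigma}\,h_1\chi_0 = \tfrac{1}{2}\sqrt{m/\sigma}\,h_1\chi_0$ must be tracked carefully; once one has $\bbp((\omega-\nu)\chi_1) = \sqrt{\sigma/m}\,\chi_0$ in hand, everything falls into place.
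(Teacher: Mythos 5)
Your proof is correct and follows essentially the same route as the paper: direct Leibniz expansion for (i)--(ii), and for (iii)--(v) decomposing $\bbp(\psi h)$ via $h = \bbp h + (\bbi - \bbp)h$, exploiting $(\omega-\nu)\chi_0 = \sqrt{\sigma/m}\,\chi_1$, the odd-symmetry vanishing of $\langle\chi_1,(\omega-\nu)\chi_1\rangle$ and $\langle\chi_0,\nu\chi_1\rangle$, the moments $\langle\chi_i,\nu\chi_i\rangle = \int\nu g\,d\nu$, and $\bbp((\omega-\nu)\chi_1) = \sqrt{\sigma/m}\,\chi_0$. The only cosmetic difference is that the paper proves (i) by induction while you use Leibniz directly, and in (v) you phrase the split as $(\bbi-\bbp)\partial_\omega\bbp h - \bbp\partial_\omega(\bbi-\bbp)h$ whereas the paper writes $\partial_\omega\bbp h - \bbp\partial_\omega\bbp h - \bbp\partial_\omega(\bbi-\bbp)h$; these are the same identity.
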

\begin{proof}  (i)~ We use the inductive method. For $j =1$, we have
\[ [\partial_{\omega}, \omega \partial_{\theta}]h = (\partial_\omega (\omega \partial_\theta) - \omega \partial_\theta \partial_\omega) h = \partial_\theta h, \quad \mbox{i.e.,} \quad
 [\partial_{\omega}, \omega \partial_{\theta}] = \partial_\theta. \]
Suppose that the relation $(i)$ holds for $j = \ell$, i.e., $[\partial_{\omega}^{\ell}, \omega \partial_{\theta}]h= \ell \partial_{\theta} \partial_{\omega}^{\ell-1}h$. Now we use the above relation to obtain
\begin{eqnarray*}
[\partial_{\omega}^{\ell+1}, \omega \partial_{\theta}]h &=& \partial_{\omega}^{\ell+1}(\omega \partial_{\theta}h) - \omega \partial_{\theta} \partial_{\omega}^{\ell+1}h = \partial_{\omega}([\partial_{\omega}^{\ell}, \omega \partial_{\theta}]h) + \partial_{\omega}(\omega \partial_{\theta} \partial_{\omega}^{\ell}h) - \omega \partial_{\theta}\partial_{\omega}^{\ell+1}h \\
&=& \partial_{\omega}(\ell \partial_{\theta} \partial_{\omega}^{\ell-1}h) + \partial_{\theta} \partial_{\omega}^{\ell}h = (\ell + 1) \partial_{\theta} \partial_{\omega}^{\ell}h
\end{eqnarray*}
which yields the desired estimate.  \newline

\noindent (ii)~By a direct calculation, we find
\begin{align}
\begin{aligned} \label{F-43}
\partial_{\omega}^j \big( (\omega - \nu)^2h \big) = j(j-1) \partial_{\omega}^{j-2} h + 2j (\omega - \nu) \partial_{\omega}^{j-1} h
+ (\omega - \nu)^2 \partial_{\omega}^{j} h, \quad j \geq 1.
\end{aligned}
\end{align}
 Thus, we use relation \eqref{F-43} to get
\begin{align*}
\begin{aligned}
[(\omega-\nu)^2, \partial_{\omega}^j]h=  -j(j-1) \partial_{\omega}^{j-2}h - 2j (\omega - \nu) \partial_{\omega}^{j-1}h, \quad \text{with}~ \partial_{\omega}^{-1}h = 0, ~ \partial_{\omega}^{0}h = h,
\end{aligned}
\end{align*}
for integer $j \geq 1$.
\newline

\noindent (iii)~ A straightforward computation gives
\[
(\omega - \nu) \bbp h =  \sqrt{\frac{\sigma}{m}} h_0 \chi_1 + h_1  (\omega- \nu) \chi_1.\,
\]
and
\begin{align*}
\begin{aligned}
\bbp \big((\omega - \nu)h \big)
&= \bbp_0 \big((\omega - \nu)h \big) + \bbp_1 \big((\omega - \nu) \bbp h \big) + \bbp_1 \big((\omega - \nu)(\bbi -\bbp)h \big) \\
& =  \sqrt{\frac{\sigma}{m}}h_1 \chi_0 +  \sqrt{\frac{\sigma}{m}} h_0 \chi_1 + \langle \chi_1, (\omega - \nu) (\bbi - \bbp)h \rangle \chi_1.
\end{aligned}
\end{align*}
Thus, we have
\begin{align*}
\begin{aligned}
[\omega - \nu, \bbp ]h = h_1 (\omega- \nu) \chi_1 - \sqrt{\frac{\sigma}{m}}h_1 \chi_0 - \langle \chi_1, (\omega - \nu) (\bbi - \bbp)h \rangle \chi_1.
\end{aligned}
\end{align*}
This yields
\begin{align*}
\begin{aligned}
\partial_{\theta}^{\ell} \partial_{\omega}^j [\omega - \nu, \bbp]h &= \partial_{\theta}^{\ell} h_1 \partial_{\omega}^{j}\big( (\omega - \nu)\chi_1 \big) - \sqrt{\frac{\sigma}{m}} \partial_{\theta}^{\ell} h_1 \partial_{\omega}^{j}  \chi_0 -\big\langle \chi_1, (\omega - \nu)\partial_{\theta}^{\ell} (\bbi - \bbp)h \big\rangle \partial_{\omega}^{j}  \chi_1.
\end{aligned}
\end{align*}
\noindent (iv)~ We use the following identities
\begin{align*}
\begin{aligned}
& \nu \bbp h = h_0 \nu \chi_0 + h_1 \nu \chi_1 \quad \mbox{and}  \\
& \bbp (\nu h) =  \bbp (\nu \bbp h) + \bbp (\nu (\bbi - \bbp) h)\\
& \hspace{0.9cm} = h_0 \chi_0 \int_{\bbr} \nu g(\nu) d\nu + h_1 \chi_1 \int_{\bbr} \nu g(\nu) d\nu + \langle\chi_0, \nu (\bbi - \bbp) h \rangle \chi_0 + \langle\chi_1, \nu (\bbi - \bbp) h \rangle \chi_1.
\end{aligned}
\end{align*}
to get
\begin{align*}
\begin{aligned}
[\nu, \bbp]h &= h_0 \nu \chi_0 + h_1 \nu \chi_1 - h_0 \chi_0 \int_{\bbr} \nu g(\nu)\,d\nu - h_1 \chi_1 \int_{\bbr} \nu g(\nu) \,d\nu \\
&- \langle\chi_0, \nu (\bbi - \bbp) h \rangle \chi_0 - \langle\chi_1, \nu (\bbi - \bbp) h \rangle \chi_1.
\end{aligned}
\end{align*}
Hence, for any integer $0 \leq \ell \leq s$ and $0 \leq j \leq s$, we can obtain
\begin{align*}
\begin{aligned}
\partial_{\theta}^{\ell} \partial_{\omega}^{j} [\nu, \bbp]h &= \partial_{\theta}^{\ell}h_0  \partial_{\omega}^{j} (\nu \chi_0) + \partial_{\theta}^{\ell}h_1  \partial_{\omega}^{j}(\nu \chi_1) - \partial_{\theta}^{\ell} h_0  \partial_{\omega}^{j}\chi_0 \int_{\bbr} \nu g(\nu)\, d\nu - \partial_{\theta}^{\ell}h_1  \partial_{\omega}^{j}\chi_1 \int_{\bbr} \nu g(\nu) \,d\nu \\
&- \langle\chi_0, \nu \partial_{\theta}^{\ell}(\bbi - \bbp) h \rangle  \partial_{\omega}^{j}\chi_0 - \langle\chi_1, \nu \partial_{\theta}^{\ell}(\bbi - \bbp) h \rangle  \partial_{\omega}^{j}\chi_1.
\end{aligned}
\end{align*}

\noindent (v)~We use the relation $\partial_{\omega}\chi_0 = - \sqrt{m/\sigma}\chi_1/2$ and $\partial_{\omega}\chi_1 = \sqrt{m/\sigma} \chi_0 - (\omega-\nu)(m/2\sigma) \chi_1$ to find
\begin{align*}
\begin{aligned}
[\partial_{\omega}, \bbp]h &= \partial_{\omega}(\bbp h) - \bbp (\partial_{\omega} \bbp h) - \bbp (\partial_{\omega} (\bbi - \bbp) h) \\
& = -\frac{m}{2\sigma} h_1   (\omega - \nu)\chi_1   + \frac{1}{2} \sqrt{\frac{m}{\sigma}}   h_1  \chi_0  - \big\langle \chi_0, \partial_{\omega} (\bbi - \bbp)h \big\rangle   \chi_0 -  \big\langle \chi_1, \partial_{\omega} (\bbi - \bbp)h \big\rangle \chi_1.
\end{aligned}
\end{align*}
Thus, we have
\begin{align*}
\begin{aligned}
 \partial_{\theta}^{\ell}\partial_{\omega}^j  [\partial_{\omega}, \bbp] h &= -\frac{m}{2\sigma}  \partial_{\theta}^{\ell} h_1 \partial_{\omega}^{j}\big( (\omega - \nu)\chi_1 \big) + \frac{1}{2} \sqrt{\frac{m}{\sigma}}  \partial_{\theta}^{\ell} h_1 \partial_{\omega}^{j} \chi_0 \\
&  - \big\langle \chi_0, \partial_{\omega} \partial_{\theta}^{\ell} (\bbi - \bbp)h \big\rangle \partial_{\omega}^{j}  \chi_0 -  \big\langle \chi_1, \partial_{\omega}\partial_{\theta}^{\ell} (\bbi - \bbp)h \big\rangle \partial_{\omega}^{j}  \chi_1.
\end{aligned}
\end{align*}
\end{proof}

\begin{lemma}\label{L5.5}
The following estimates hold.
\begin{align*}
\begin{aligned}
& \|\partial_\omega^j \chi_0\|_{L_{\omega, \nu}^2}^2 \lesssim  (\frac{m}{\sigma})^j, \quad  \|\partial_\omega^j \chi_1\|_{L_{\omega, \nu}^2}^2 \lesssim (\frac{m}{\sigma})^{j}, \quad \|\partial_\omega^j \big((\omega - \nu) \chi_1 \big)\|_{L_{\omega, \nu}^2}^2 \lesssim (\frac{m}{\sigma})^{j-1}, \\
& \|\partial_\omega^j ( \nu \chi_0 )\|_{L_{\omega, \nu}^2}^2 \lesssim (\frac{m}{\sigma})^{j} \|g\|_{\nu}^{2}, \quad \mbox{and} \quad \|\partial_\omega^j ( \nu \chi_1 )\|_{L_{\omega, \nu}^2}^2 \lesssim (\frac{m}{\sigma})^{j} \|g\|_{\nu}^{2}.
\end{aligned}
\end{align*}

\end{lemma}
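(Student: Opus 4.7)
The proof is essentially a scaling computation. The plan is to introduce the non-dimensional velocity variable
\[ \tilde\omega := \sqrt{\tfrac{m}{\sigma}}(\omega - \nu), \qquad d\omega = \sqrt{\tfrac{\sigma}{m}}\,d\tilde\omega, \qquad \partial_\omega = \sqrt{\tfrac{m}{\sigma}}\,\partial_{\tilde\omega}, \]
in which the Maxwellian $M$ takes the universal form $M = (2\pi)^{-1}(2\pi)^{-1/2}\sqrt{m/\sigma}\,e^{-\tilde\omega^{2}/2}g(\nu)$. Consequently one has the closed-form expressions
\[ \chi_0 = \left(\tfrac{m}{2\pi\sigma}\right)^{1/4} e^{-\tilde\omega^{2}/4}\sqrt{g(\nu)}, \qquad \chi_1 = \left(\tfrac{m}{2\pi\sigma}\right)^{1/4}\tilde\omega\, e^{-\tilde\omega^{2}/4}\sqrt{g(\nu)}, \]
so that $\chi_0$ and $\chi_1$ factor into a universal function of $\tilde\omega$ times $\sqrt{g(\nu)}$, up to the normalization constant $(m/(2\pi\sigma))^{1/4}$.

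From here I would just track the powers of $m/\sigma$. Each $\partial_\omega$ produces a factor $\sqrt{m/\sigma}$, hence $\partial_\omega^j$ squared contributes $(m/\sigma)^j$; the Jacobian $d\omega = \sqrt{\sigma/m}\,d\tilde\omega$ together with the prefactor squared $(m/(2\pi\sigma))^{1/2}$ cancel up to an absolute constant. For the first two estimates this directly yields
\[ \|\partial_\omega^j\chi_0\|_{L^{2}_{\omega,\nu}}^{2},\ \|\partial_\omega^j\chi_1\|_{L^{2}_{\omega,\nu}}^{2} \lesssim \left(\tfrac{m}{\sigma}\right)^{j}\!\int_{\bbr}\!g(\nu)\,d\nu \cdot \int_{\bbr}\!|\partial_{\tilde\omega}^{j}(e^{-\tilde\omega^{2}/4})|^{2}d\tilde\omega, \]
and similarly with $\tilde\omega\,e^{-\tilde\omega^{2}/4}$ in the $\chi_1$ case, both finite universal constants. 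For $(\omega-\nu)\chi_1$, I rewrite $(\omega-\nu)\chi_1 = \sqrt{\sigma/m}\,\tilde\omega\,\chi_1$; the extra $\sqrt{\sigma/m}$ squared produces exactly the $(\sigma/m)$ factor that turns $(m/\sigma)^{j}$ into $(m/\sigma)^{j-1}$, as claimed. For $\nu\chi_0$ and $\nu\chi_1$, the scaling argument is unchanged; the only new feature is that the $\nu^{2}$ factor in the integrand pairs with $g(\nu)$ to give $\int_{\bbr}\nu^{2}g(\nu)\,d\nu \leq \|g\|_\nu^{2}$.

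There is essentially no obstacle: once the scaling is set up, each bound is a one-line estimate. The only bookkeeping point to be careful about is the Jacobian $\sqrt{\sigma/m}$, which exactly balances the normalization $\sqrt{m/\sigma}$ from the Maxwellian so that no spurious powers of $m/\sigma$ appear beyond those coming from the $\partial_\omega^j$ themselves (and the one compensating factor in the $(\omega-\nu)\chi_1$ case). This is precisely the feature that makes the subsequent mixed-type estimates relying on Lemma \ref{L5.5} dimensionally consistent.
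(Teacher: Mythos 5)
Your proof is correct. The paper obtains the same bounds by explicitly computing $\partial_\omega^j\sqrt{M}$ as a finite sum of terms $\mathfrak{a}_\ell\,(m/\sigma)^{k+\ell}(\omega-\nu)^{2\ell+1}\sqrt{M}$ (and the analogous even expansion), squaring, and then evaluating each resulting integral through the Gaussian moment identity $\int_{\bbr^2}(\omega-\nu)^{2\ell}M\,d\omega\,d\nu = \frac{(2\ell-1)!!}{2\pi}(\sigma/m)^\ell$; the remaining three estimates are dispatched by remarking that they "can be treated similarly." Your non-dimensionalization $\tilde\omega = \sqrt{m/\sigma}(\omega-\nu)$ condenses exactly this bookkeeping into a single change of variables: the factor $(m/\sigma)^{j/2}$ comes once and for all from $\partial_\omega = \sqrt{m/\sigma}\,\partial_{\tilde\omega}$, the Jacobian $\sqrt{\sigma/m}$ cancels the squared normalization prefactor, and the five cases reduce to five finite universal integrals in $\tilde\omega$. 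This is the same dimensional argument, but organized so that the cancellation is structural rather than term-by-term, and so that the $(\omega-\nu)\chi_1$ and $\nu\chi_i$ cases require no separate induction. One bookkeeping point worth making explicit in a writeup: since $\nu$ commutes with $\partial_\omega$, the last two bounds follow from the first two with $g(\nu)\,d\nu$ replaced by $\nu^2 g(\nu)\,d\nu \le \|g\|_\nu^2$, which your proposal does correctly identify.
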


\begin{proof}
We only provide  the first estimate, and the other two estimates can be treated similarly.  Direct calculation yield 
\begin{align*}
\begin{aligned}
\partial_{\omega}^j \sqrt{M} &= \mfa_1 \big(\frac{m}{\sigma}\big)^{k} (\omega - \nu) \sqrt{M} + \mfa_3 \big(\frac{m}{\sigma}\big)^{k+1} (\omega - \nu)^3 \sqrt{M} + \cdots  \\
& + \mfa_{2k-1} \big(\frac{m}{\sigma}\big)^{2k-1} (\omega - \nu)^{2k-1} \sqrt{M}, \quad \text{if} ~j = 2k - 1 ~\text{with}~ k \geq 1, \\
\partial_{\omega}^j \sqrt{M} &= \mfb_0 \big(\frac{m}{\sigma}\big)^{k} \sqrt{M} + \mfb_2 \big(\frac{m}{\sigma}\big)^{k+1} (\omega - \nu)^2 \sqrt{M} + \cdots  \\
& + \mfb_{2k} \big(\frac{m}{\sigma}\big)^{2k} (\omega - \nu)^{2k} \sqrt{M}, \quad \text{if}~ j = 2k ~\text{with}~ k \geq 1.
\end{aligned}
\end{align*}
Thus, if $j = 2k -1$, we have
\begin{align}
\begin{aligned} \label{F-41}
\|\partial_\omega^j \sqrt{M}\|_{L_{\omega, \nu}^2}^2 &= \int_{\bbr^2} \sqrt{2\pi} |\partial_{\omega}^j \sqrt{M}|^2 \,d\omega d\nu \\
& \lesssim \big(\frac{m}{\sigma}\big)^{2k} \int_{\bbr^2} (\omega - \nu)^2 M \,d\omega d\nu + \big(\frac{m}{\sigma}\big)^{2k+2} \int_{\bbr^2} (\omega - \nu)^6 M \,d\omega d\nu  + \cdots \\
&+ \big(\frac{m}{\sigma}\big)^{4k - 1} \int_{\bbr^2} (\omega - \nu)^{4k-2} M \,d\omega d\nu  \\
& \lesssim \big(\frac{m}{\sigma}\big)^{2k} \frac{\sigma}{m} +  \big(\frac{m}{\sigma}\big)^{2k+2}  \big(\frac{\sigma}{m}\big)^{3} + \cdots + \big(\frac{m}{\sigma}\big)^{4k-2}  \big(\frac{\sigma}{m}\big)^{2k-1} \lesssim \big(\frac{m}{\sigma}\big)^{2k-1},
\end{aligned}
\end{align}
where we used
\[  \int_{\bbr^2} (\omega - \nu)^{2\ell} M \,d\omega d\nu = \frac{(2\ell-1)!! }{2\pi} \big(\frac{\sigma}{m}\big)^{\ell}.  \]
Similarly, if $j = 2k$, we have
\begin{align} \label{F-41-1}
\begin{aligned}
\|\partial_\omega^j \sqrt{M}\|_{L_{\omega, \nu}^2}^2 & \lesssim \big(\frac{m}{\sigma}\big)^{2k}.
\end{aligned}
\end{align}
Hence, we obtain our desired estimate by \eqref{F-41} and \eqref{F-41-1}.
\end{proof}

In next lemma, we derive a Gronwall's inequality for $ \|\partial_{\theta}^i \partial_{\omega}^j (\bbi - \bbp) f\|_{L^2}^2$ as follows.

\begin{lemma} \label{L6.7}
For $s \geq 1$ and $T >0$, let $f \in \mathcal{C}([0,T]; H^s(\mathbb{T} \times \mathbb{R}^2))$ be a solution to the equation \eqref{F-1}. Suppose that $\|g\|_{\nu} < \infty$ and $\sigma$ satisfies $\sigma > C \max\{m\kappa^2, \kappa, m^{-1}, m \|g\|_{\nu}^{2} \}$ for sufficiently large constant $C>0$.Then, for $1 \leq j \leq s-i$, we have
$$\begin{aligned}
& \frac{d}{dt} \|\partial_{\theta}^i \partial_{\omega}^j (\bbi - \bbp) f\|_{L^2}^2 + \frac{1}{m} \|\partial_{\theta}^i \partial_{\omega}^j (\bbi - \bbp)f\|_{\mu}^2 \\
& \hspace{1cm} \lesssim  m\|\partial_{\theta}^{i+1} \partial_{\omega}^{j-1} (\bbi - \bbp) f\|^2_{\mu} + m^{2j-1} \Big\{ \|\partial_{\theta}^{i+1}(f_0, f_1)\|_{L^2}^2  + \|\partial_{\theta}^{i+1} (\bbi - \bbp)f\|_{\mu}^2 \Big\}\cr
& \hspace{1cm} + m\|\partial_\omega^{j-1} (\bbi - \bbp)f\|_{L^2}^2 + \frac{\kappa}{\sqrt{m\sigma}}\|\partial_\omega^j  (\bbi - \bbp)f\|_{\mu}^2 + m^{2j-1}  \left(\|f_1\|_{L^2}^2 + \| (\bbi - \bbp)f\|_{\mu}^2 \right).
\end{aligned}$$
\end{lemma}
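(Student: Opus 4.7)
\textbf{Proof proposal for Lemma \ref{L6.7}.}

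The plan is to take the $L^2_{\theta,\omega,\nu}$-inner product of equation \eqref{F-40} with $\partial_\theta^i \partial_\omega^j (\bbi - \bbp) f$ and bound each resulting term, using the coercivity of $\mathcal{L}_0$ to produce the good term $(\lambda_0/m)\|\partial_\theta^i \partial_\omega^j (\bbi-\bbp)f\|_\mu^2$ on the left. The transport term $\omega \partial_\theta^{i+1}\partial_\omega^j(\bbi-\bbp)f$ is handled by integration by parts in $\theta$ and vanishes, while Lemma \ref{L3.2}(ii) applied in the micro component $\partial_\theta^i\partial_\omega^j(\bbi-\bbp)f$ gives the desired coercive term. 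This yields an identity of the schematic form
\[
\tfrac{1}{2}\tfrac{d}{dt}\|\partial_\theta^i\partial_\omega^j(\bbi-\bbp)f\|_{L^2}^2 + \tfrac{\lambda_0}{m}\|\partial_\theta^i\partial_\omega^j(\bbi-\bbp)f\|_\mu^2 = \sum_{k} \mathcal{J}_k,
\]
with $\mathcal{J}_k$ coming from the six RHS terms in \eqref{F-40}.

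I would then estimate each $\mathcal{J}_k$ separately. The first commutator $\partial_\theta^i[\partial_\omega^j,\omega\partial_\theta](\bbi-\bbp)f = j\,\partial_\theta^{i+1}\partial_\omega^{j-1}(\bbi-\bbp)f$ by Lemma \ref{L5.6}(i), produces the first term on the RHS of the claim after Cauchy--Schwarz and Young's inequality (with $\epsilon = 1/m$). For $\partial_\theta^i[\partial_\omega^j,\mathcal{L}_0](\bbi-\bbp)f$, the key observation is that only the non-constant coefficient piece of $\mathcal{L}_0$, namely $-(1/4\sigma)(\omega-\nu)^2$, contributes, so by Lemma \ref{L5.6}(ii) this commutator equals $-(1/4\sigma)\bigl[j(j-1)\partial_\omega^{j-2} + 2j(\omega-\nu)\partial_\omega^{j-1}\bigr]\partial_\theta^i(\bbi-\bbp)f$; using $(\omega-\nu)\lesssim\sqrt{\sigma/m}\sqrt{\alpha}$ inside the $L^2$ pairing, Cauchy--Schwarz, and Young's inequality produces terms controlled by $m\|\partial_\omega^{j-1}(\bbi-\bbp)f\|_{L^2}^2$ (in fact with extra $\theta$-derivatives) plus a small multiple of the coercive term. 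The commutator terms $\partial_\theta^i\partial_\omega^j[\omega-\nu,\bbp]f$, $\partial_\theta^i\partial_\omega^j[\partial_\omega,\bbp]f$, and $\partial_\theta^{i+1}\partial_\omega^j[\bbp,\omega]f = -\partial_\theta^{i+1}\partial_\omega^j[\omega,\bbp]f$ are handled using Lemma \ref{L5.6}(iii)-(v), which express them as combinations of $\partial_\theta^\ell f_0$, $\partial_\theta^\ell f_1$, and micro projections of $(\bbi-\bbp)f$ multiplied by $\omega$-derivatives of $\chi_0,\chi_1,(\omega-\nu)\chi_1,\nu\chi_0,\nu\chi_1$; by Lemma \ref{L5.5} the weighted $L^2_{\omega,\nu}$-norms of these factors are bounded by $(m/\sigma)^{(j-1)/2}$ or worse, and after pairing with the micro target and using $\sigma\gtrsim 1/m$ plus $\sigma\gtrsim m\|g\|_\nu^2$, the resulting contributions fit into the claimed $m^{2j-1}$ bound on $\|\partial_\theta^{i+1}(f_0,f_1)\|_{L^2}^2 + \|\partial_\theta^{i+1}(\bbi-\bbp)f\|_\mu^2$, or into the non-derivative remainder $m^{2j-1}(\|f_1\|_{L^2}^2+\|(\bbi-\bbp)f\|_\mu^2)$.

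For the nonlinear term $\partial_\theta^i\partial_\omega^j \mathcal{N}(f,(\bbi-\bbp)f)$, I would expand the Leibniz rule on $\mathcal{S}[\sqrt{M}f]$ and on $(\omega-\nu)$ and $\partial_\omega$, using Lemma \ref{L6.1}(i) to bound every occurrence of $\partial_\theta^\ell \mathcal{S}[\sqrt{M}f]$ uniformly by $1$, mimicking the estimates on $\mathcal{I}_{41}$--$\mathcal{I}_{43}$ and $\mathcal{I}_{51}$--$\mathcal{I}_{52}$ in Lemmas \ref{L6.2}--\ref{L6.3}. This produces the term $(\kappa/\sqrt{m\sigma})\|\partial_\omega^j(\bbi-\bbp)f\|_\mu^2$ plus lower-order pieces absorbable by the $m^{2j-1}$ remainder, using $\sigma\gtrsim m\kappa^2$ to keep the constant small. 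Finally, summing all estimates, absorbing the $(\lambda_0/m)$-coercive terms wherever they appear with a small prefactor, and using $\sigma\gtrsim \max\{m\kappa^2,\kappa,1/m,m\|g\|_\nu^2\}$ yields the claim.

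\textbf{Main obstacle.} The delicate point is the commutator $[\partial_\omega^j,\mathcal{L}_0]$: the resulting lower-order $\omega$-derivatives $\partial_\omega^{j-1}$ and $\partial_\omega^{j-2}$ of $(\bbi-\bbp)f$ appear without the full $\mu$-weight, and the factor $(\omega-\nu)$ must be absorbed via the $\alpha$-part of the $\mu$-weight at the cost of $\sqrt{\sigma/m}$. Balancing these powers of $m$ and $\sigma$ so that the coercive term survives while the error sits in the claimed RHS (in particular giving the $m\|\partial_\omega^{j-1}(\bbi-\bbp)f\|_{L^2}^2$ term rather than a $\mu$-weighted one) is the calculation that requires the most care, and is what eventually forces the $m^{2j-1}$ weight factors elsewhere via the recursion when Lemma \ref{L6.7} is iterated in $j$.
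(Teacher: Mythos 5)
Your overall plan matches the paper's proof of Lemma \ref{L6.7}: test \eqref{F-40} against $\partial_\theta^i\partial_\omega^j(\bbi-\bbp)f$, kill the transport term by integrating by parts in $\theta$, invoke the coercivity of $\mathcal{L}_0$, and then bound the six source terms one at a time using Lemma \ref{L5.6}, Lemma \ref{L5.5}, Lemma \ref{L6.1}$(i)$, and Young's inequality with an $\epsilon$-weight; the commutator identity $[\partial_\omega^j,\mathcal{L}_0]=\tfrac{1}{4\sigma}[(\omega-\nu)^2,\partial_\omega^j]$ and the Leibniz expansion of the nonlinear term are treated the same way the paper does. The obstacle you single out ($\mathcal{I}_{82}$) is indeed the place where the $m$-weight bookkeeping is most delicate, and your remarks there are sound.

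There is, however, one genuine gap in the setup that your schematic identity glosses over. Lemma \ref{L3.2}$(ii)$ yields
\[
\langle -\mathcal{L}_0 h, h\rangle \;\geq\; \frac{\lambda_0}{m}\,\big\|(\bbi-\bbp_0)h\big\|_{L_\mu^2}^2,
\]
not $\frac{\lambda_0}{m}\|h\|_{L_\mu^2}^2$. You apply it with $h=\partial_\theta^i\partial_\omega^j(\bbi-\bbp)f$ and assert that "applied in the micro component" it "gives the desired coercive term" $\frac{\lambda_0}{m}\|\partial_\theta^i\partial_\omega^j(\bbi-\bbp)f\|_\mu^2$. That would follow if $h$ were orthogonal to $\chi_0$, but $\partial_\omega^j$ does \emph{not} commute with $\bbp_0$: even though $(\bbi-\bbp)f$ is micro, $\partial_\omega^j(\bbi-\bbp)f$ is not, since $\bbp_0\partial_\omega^j(\bbi-\bbp)f=\langle\partial_\omega^j\chi_0,(\bbi-\bbp)f\rangle\,\chi_0\neq 0$ in general (the function $\chi_0=\sqrt{2\pi M}$ depends on $\omega$). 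Consequently your schematic equality should be an inequality with the good term $\frac{\lambda_0}{m}\|(\bbi-\bbp_0)\partial_\theta^i\partial_\omega^j(\bbi-\bbp)f\|_\mu^2$, and one must separately control the defect $\|\bbp_0\partial_\theta^i\partial_\omega^j(\bbi-\bbp)f\|_\mu^2$. The paper does this in \eqref{F-45-1}–\eqref{F-48}: writing $\bbp_0\partial_\omega^j h=\langle\partial_\omega^j\chi_0,h\rangle\chi_0$, using $\|\partial_\omega^j\chi_0\|_{L^2_{\omega,\nu}}\lesssim(m/\sigma)^{j/2}$ from Lemma \ref{L5.5}, and computing the $\mu$-weight of $\chi_0$ and $\partial_\omega\chi_0$ explicitly, the defect is shown to be $\lesssim \tfrac{1}{m}(m/\sigma)^j\|\partial_\theta^i(\bbi-\bbp)f\|_{L^2}^2$, which then sits inside the claimed RHS after using $\sigma\gtrsim 1/m$. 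This step requires Lemma \ref{L5.5}, which you cite only for the $[\cdot,\bbp]$-commutator terms; it must also be deployed here. Without this correction the main energy inequality you write down is unjustified.
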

\begin{proof}
Since this proof is rather lengthy and technical, we postpone it to Appendix \ref{app}.
\end{proof}
\begin{remark}\label{R5.1} Under our assumption on $\sigma$, we find from Lemma \ref{L6.7} that
$$\begin{aligned} 
& \frac{d}{dt} \|\partial_{\omega}^j (\bbi - \bbp) f\|_{L^2}^2 + \frac{1}{m} \|\partial_{\omega}^j (\bbi - \bbp)f\|_{\mu}^2 \\
& \hspace{1cm} \lesssim  m\left( \|\partial_{\theta} \partial_{\omega}^{j-1} (\bbi - \bbp) f\|^2_{\mu} + \|\partial_\omega^{j-1} (\bbi - \bbp)f\|_{L^2}^2\right) \cr
& \hspace{1cm} + m^{2j-1} \Big\{ \|f_1\|_{L^2}^2 + \|\partial_{\theta} (f_0, f_1)\|_{L^2}^2  +  \| (\bbi - \bbp)f\|_{\mu}^2 + \|\partial_{\theta} (\bbi - \bbp)f\|_{\mu}^2  \Big\},~~\mbox{for $1 \leq j \leq s$.}
\end{aligned}$$
\end{remark}
\begin{proposition} \label{P6.2}
For $s \geq 1$ and $T >0$, let $f \in \mathcal{C}([0,T]; H^s(\mathbb{T} \times \mathbb{R}^2))$ is a solution to the equation \eqref{F-1}. Suppose that $\|g\|_{\nu} < \infty$ and $\sigma$ satisfies $\sigma > C \max\{m\kappa^2, \kappa, m^{-1}, m \|g\|_{\nu}^{2} \}$ for sufficiently large constant $C>0$. Then, for $s \geq 1$, we have two energy functionals $\mathcal{E}_s(t)$ and $\mathcal{D}_s(t)$ such  that
$$\begin{aligned}
&\frac{d}{dt}\mathcal{E}_s(t)+\mathcal{D}_s(t)\leq 0, \quad\|f(t)\|_{H^s}^2\simeq\mathcal{E}_s(t)\lesssim \mathcal{D}_s(t). 
\end{aligned}$$
\end{proposition}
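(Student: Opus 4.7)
The plan is to construct $\mathcal{E}_s$ and $\mathcal{D}_s$ as weighted sums of the differential inequalities already established, organized by an induction on the order $j$ of $\omega$-differentiation. For the base case $j=0$, summing Proposition \ref{P6.1} over $\ell = 0, 1, \dots, s-1$ yields
\begin{equation*}
\frac{d}{dt} \mathcal{E}_s^{(0)}(t) + \mathcal{D}_s^{(0)}(t) \le 0,
\end{equation*}
with $\mathcal{E}_s^{(0)} \simeq \|f\|_{H^s_\theta}^2$ and
\begin{equation*}
\mathcal{D}_s^{(0)} \gtrsim \frac{1}{m}\sum_{i=0}^{s}\Bigl(\|\partial_\theta^i (\bbi-\bbp)f\|_\mu^2 + \|\partial_\theta^i f_0\|_{L^2}^2 + \|\partial_\theta^i f_1\|_{L^2}^2\Bigr).
\end{equation*}
This controls every pure-$\theta$ quantity that will appear on the right-hand sides at higher $\omega$-levels.

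Next I would set
\begin{equation*}
\mathcal{E}_s(t) := \mathcal{E}_s^{(0)}(t) \;+\; \sum_{j=1}^{s} c_j \sum_{0 \le i \le s - j} \|\partial_\theta^i \partial_\omega^j (\bbi-\bbp)f(t)\|_{L^2}^2,
\end{equation*}
for a decreasing sequence $\{c_j\}$ of positive constants to be chosen. Applying Lemma \ref{L6.7} at each $(i,j)$ produces, on the left, the dissipation $c_j m^{-1}\|\partial_\theta^i\partial_\omega^j(\bbi-\bbp)f\|_\mu^2$; on the right we encounter four types of terms: (a) $c_j\, m\,\|\partial_\theta^{i+1}\partial_\omega^{j-1}(\bbi-\bbp)f\|_\mu^2$ at the previous $\omega$-level, (b) $c_j\, m^{2j-1}\|\partial_\theta^{i+1}(f_0,f_1)\|_{L^2}^2$ and $c_j\, m^{2j-1}\|\partial_\theta^{i+1}(\bbi-\bbp)f\|_\mu^2$ at the pure-$\theta$ level, (c) $c_j\, m\,\|\partial_\omega^{j-1}(\bbi-\bbp)f\|_{L^2}^2 \le c_j m\|\partial_\omega^{j-1}(\bbi-\bbp)f\|_\mu^2$, and (d) the self-absorbing piece $c_j\tfrac{\kappa}{\sqrt{m\sigma}}\|\partial_\omega^j(\bbi-\bbp)f\|_\mu^2$ together with the zeroth-order remainder $c_j\, m^{2j-1}(\|f_1\|_{L^2}^2 + \|(\bbi-\bbp)f\|_\mu^2)$.

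The crux of the argument, and its main obstacle, is the hierarchical choice of $c_j$ that lets every one of these terms be absorbed by a constant fraction of either $\mathcal{D}_s^{(0)}$ or the new level-$j$ dissipation. Type (d) is absorbed into the level-$j$ dissipation provided $\sigma \gtrsim m\kappa^2$; the remaining lower-order piece and the type (b) terms are absorbed into $\mathcal{D}_s^{(0)}$ as long as $c_j m^{2j-1} \lesssim 1/m$. The genuinely coupled terms (a) and (c) require that the previous level's dissipation, weighted by $c_{j-1}/m$, dominate the current $c_j m$ coefficient, i.e.\ $c_j \lesssim c_{j-1}/m^2$. All these constraints are simultaneously satisfied by the geometric choice $c_j = (C_\ast / m^2)^{j}$ with $C_\ast$ a sufficiently small universal constant, since then $c_j m^{2j-1} \sim 1/m$ fits exactly with $\mathcal{D}_s^{(0)}$'s scale. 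Summing the weighted inequalities produces $\tfrac{d}{dt}\mathcal{E}_s + \mathcal{D}_s \le 0$, where $\mathcal{D}_s$ is a positive multiple of $\mathcal{D}_s^{(0)}$ augmented by all the $c_j m^{-1}$-weighted $\mu$-dissipations.

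Finally, the equivalence $\mathcal{E}_s(t) \simeq \|f(t)\|_{H^s}^2$ follows from $\partial_\theta^i\partial_\omega^j f = \partial_\theta^i \partial_\omega^j (\bbi-\bbp)f + \partial_\omega^j\!\bigl(\partial_\theta^i f_0 \cdot \chi_0 + \partial_\theta^i f_1 \cdot \chi_1\bigr)$ together with the weighted bounds of Lemma \ref{L5.5} on $\partial_\omega^j\chi_i$, which show that the macroscopic part is controlled by the $\theta$-pieces of $\mathcal{E}_s^{(0)}$ and that $\|\partial_\theta^i \partial_\omega^j(\bbi-\bbp)f\|_{L^2}^2$ appears explicitly in $\mathcal{E}_s$ (the converse bound is identical). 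The coercivity $\mathcal{E}_s \lesssim \mathcal{D}_s$ reduces, summand by summand, to the pointwise inequality $\|\cdot\|_{L^2}^2 \le \|\cdot\|_\mu^2$ combined with the $m^{-1}$ factor inside each dissipation term, while the control of $\mathcal{E}_s^{(0)}$ by $\mathcal{D}_s^{(0)}$ uses the Poincar\'e inequality on the mean-zero quantity $f_0$ exactly as in Proposition \ref{P6.1}. Combining $\tfrac{d}{dt}\mathcal{E}_s + \mathcal{D}_s \le 0$ with $\mathcal{E}_s \lesssim \mathcal{D}_s$ then yields the exponential decay $\|f(t)\|_{H^s} \lesssim e^{-\bar C t}\|f^{in}\|_{H^s}$ claimed in Theorem \ref{T-I}.
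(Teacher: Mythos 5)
Your proof is correct and rests on the same ingredients as the paper's: Proposition~\ref{P6.1} to control the pure-$\theta$ dissipation and Lemma~\ref{L6.7} for the mixed derivatives, combined via an $m$-weighted energy that absorbs the right-hand side terms hierarchically under the standing assumption on $\sigma$. The only meaningful difference is organizational: the paper proceeds by nested induction on the total derivative order, building $\mathcal{E}_{\ell+1}$ from $\mathcal{E}_\ell$ with coefficients $c_{\ell+1,k}m^{2k}$ chosen to satisfy $c_{\ell+1,k}-Cc_{\ell+1,k-1}>0$, while you assemble $\mathcal{E}_s$ in a single flat linear combination with geometric weights $c_j=(C_\ast/m^2)^j$ indexed by the $\omega$-derivative order alone. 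After rescaling by $m^{2s}$ the two weight systems encode the same hierarchy (heavier weight on terms with more $\theta$-derivatives at fixed total order), and both reduce the absorption to $c_j\lesssim c_{j-1}/m^2$ together with $\sigma\gtrsim m\kappa^2$; your flat summation is arguably the cleaner presentation. One small quibble: $C_\ast$ is not a universal constant — because you sum $\sim s$ terms and the implicit constants in Lemma~\ref{L6.7} depend on $s$, the smallness of $C_\ast$ must be taken to depend on $s$ (as do the constants in the paper's own argument), which is harmless for the claimed conclusion.
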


\begin{proof}
It follows from Lemma \ref{L6.7} with $i = 0$, $j =1$ and Proposition \ref{P6.1} with $\ell = 0$ that
\begin{align} \label{e6-71}
\begin{aligned}
& \frac{d}{dt} \|\partial_{\omega} (\bbi - \bbp) f\|_{L^2}^2 + \frac{1}{m} \|\partial_{\omega} (\bbi - \bbp)f\|_{\mu}^2\cr
& \hspace{0.5cm} \lesssim m \Big\{ \|\partial_{\theta}(f_0, f_1)\|_{L^2}^2  + \|\partial_{\theta} (\bbi - \bbp)f\|_{\mu}^2 \Big\} + m(\|f_1\|_{L^2}^2 + \|(\bbi - \bbp)f\|_{\mu}^2), \\
& \frac{d}{dt} \Big\{ C_{0}\|(f, \partial_{\theta}f)\|^2_{L^2} + \frac{1}{\sqrt{m\sigma}} \int_{\mathbb{T}}f_1 \partial_{\theta}f_0 \,d\theta \Big\} \\
& \hspace{0.5cm} + \frac{C_{0,1}}{m} \Big\{ \| \big( (\bbi - \bbp)f, \partial_{\theta} (\bbi - \bbp)f \big) \|_{\mu}^2 +  \|(f_0, \partial_{\theta}f_0)\|_{L^2}^{2} +  \|(f_1, \partial_{\theta}f_1)\|_{L^2}^{2}  \Big\} \leq 0,
\end{aligned}
\end{align}
for some positive constants $C_0$ and $C_{0,1}$. \newline

We multiply \eqref{e6-71}$_2$ by $C_{1,1}m^2$ with $C_{1,1} > 0$, then add \eqref{e6-71}$_1$ to obtain
\begin{align*}
\begin{aligned}
& \frac{d}{dt} \Big\{C_{1,1}m^2 \Big( C_{0} \|(f, \partial_{\theta}f)\|^2_{L^2} + \frac{1}{\sqrt{m\sigma}} \int_{\mathbb{T}}f_1 \partial_{\theta}f_0 \,d\theta \Big) +  \|\partial_{\omega} (\bbi - \bbp) f\|_{L^2}^2  \Big\} \\
& \hspace{0.5cm} + (C_{0,1} C_{1, 1} - C)m \Big\{ \big\| \big( (\bbi - \bbp)f, \partial_{\theta} (\bbi - \bbp)f \big) \big\|_{\mu}^2 +  \|(f_0, \partial_{\theta}f_0)\|_{L^2}^{2} +  \|(f_1, \partial_{\theta}f_1)\|_{L^2}^{2}  \Big\} \\
& \hspace{0.5cm}+ \frac{1}{m} \|\partial_{\omega} (\bbi - \bbp)f\|_{\mu}^2  \leq 0.
\end{aligned}
\end{align*}
We now choose $C_{1,1} > 0$ large enough so that
\[ C_{0,1}C_{1,1} - C > 0. \]
This yields that the above inequality can be rewritten as 
\begin{align}\label{e6-72}
\begin{aligned}
&\frac{d}{dt}\mathcal{E}_1(t)+\mathcal{D}_1(t)\leq 0, \quad\|f(t)\|_{H^1}^2\simeq\mathcal{E}_1(t)\lesssim \mathcal{D}_1(t),
\end{aligned}
\end{align}
where
\begin{align}
\begin{aligned}
& \mathcal{E}_1(t)=C_{1,1}m^2 \Big( C_{0}\sum_{k=0}^1\|\partial_{\theta}^k f\|^2_{L^2} + \frac{1}{\sqrt{m\sigma}} \int_{\mathbb{T}}f_1 \partial_{\theta}f_0 \,d\theta \Big) +  \|\partial_{\omega} (\bbi - \bbp) f\|_{L^2}^2  \nonumber \\
& \mathcal{D}_1(t)=\bar{C}_{1,1}m \Big\{ \sum_{k=0}^1\big\| \partial_{\theta}^k (\bbi - \bbp)f  \big\|_{\mu}^2 +  \sum_{k=0}^1 \|\partial_\theta^k(f_0, f_1)\|_{L^2}^{2}  \Big\}  + \frac1m\|\partial_{\omega} (\bbi - \bbp)f\|_{\mu}^2, \nonumber
\end{aligned}
\end{align}
for some $\bar{C}_{1,1} > 0$.

For the higher-order estimates, we claim that the following holds for $s \geq 2$:
\begin{align}\label{eq_ind}
\begin{aligned}
\frac{d}{dt}\mathcal{E}_s(t)+\mathcal{D}_s(t)\leq 0, \quad\|f(t)\|_{H^s}^2\simeq\mathcal{E}_s(t)\lesssim \mathcal{D}_s(t).
\end{aligned}
\end{align}
 For the proof, we use the strong inductive method. It follows from Remark \ref{R5.1} with $j=2$ and Lemma \ref{L6.7} with $i=1, j =1$ that
\begin{align} \label{e6-73}
\begin{aligned}
& \frac{d}{dt} \|\partial_{\omega}^{2} (\bbi - \bbp) f\|_{L^2}^2 + \frac{1}{m} \|\partial_{\omega}^{2} (\bbi - \bbp)f\|_{\mu}^2  \lesssim m\left( \|\partial_{\omega} (\bbi - \bbp)f\|_{\mu}^2 + \|\partial_{\theta} \partial_{\omega} (\bbi - \bbp)f\|_{\mu}^2\right)\cr
& \hspace{2.5cm} + m^3 \Big\{ \|(f_0,f_1),\partial_{\theta}(f_0, f_1)\|_{L^2}^2  + \|((\bbi - \bbp)f, \partial_{\theta} (\bbi - \bbp)f)\|_{\mu}^2 \Big\}
\end{aligned}
\end{align}
and
\begin{align} \label{e6-74}
\begin{aligned}
& \frac{d}{dt} \|\partial_{\theta} \partial_{\omega} (\bbi - \bbp) f\|_{L^2}^2 + \frac{1}{m} \|\partial_{\theta} \partial_{\omega} (\bbi - \bbp)f\|_{\mu}^2 \cr
& \hspace{0.5cm} \lesssim m \Big\{ \|\partial_{\theta}^{2}(f_0, f_1)\|_{L^2}^2  + \|\partial_{\theta}^{2} (\bbi - \bbp)f\|_{\mu}^2 + \|f_1\|_{L^2}^2 + \|(\bbi - \bbp)f\|_{\mu}^2\Big\} + \frac1m\|\partial_\omega(\bbi - \bbp)f\|_{\mu}^2.
\end{aligned}
\end{align}
We multiply \eqref{e-theta} for $\ell=1$, \eqref{e6-74} and \eqref{e6-72} by $C_{2,0} m^4$, $C_{2,1} m^2$ and $C_{2,2} m^2$ with $C_{2,0}, C_{2,1}, C_{2,2} > 0$, respectively, then add the two resulting estimates together with \eqref{e6-73}, we have
$$\begin{aligned}
& \frac{d}{dt} \Big\{C_{2,0} m^4\Big(C_{0}\|(\partial_{\theta} f, \partial_{\theta}^{2}f)\|^2_{L^2} + \frac{1}{\sqrt{m\sigma}} \int_{\mathbb{T}} \partial_{\theta}f_1 \partial_{\theta}^2f_0 \,d\theta \Big)\\
& \hspace{0.5cm}+ C_{2,1}m^2 \|\partial_{\theta} \partial_{\omega} (\bbi - \bbp) f\|_{L^2}^2 + \|\partial_\omega^2 (\bbi - \bbp) f\|_{L^2}^2 + C_{2,2}m^2 \mathcal{E}_1(t) \Big\} \\
& \hspace{0.5cm} + \frac{1}{m} \|\partial_{\omega}^{2} (\bbi - \bbp)f\|_{\mu}^2 + (C_{2,1} - C)m \|\partial_{\theta} \partial_{\omega} (\bbi - \bbp)f\|_{\mu}^2 + (C_{2,2}- CC_{2,1} - C) m \|\partial_{\omega} (\bbi - \bbp)f\|_{\mu}^2\cr
& \hspace{0.5cm} + (C_{2,0} C_{0,1}-C C_{2,1}) m^3 \sum_{k=1}^2\Big\{ \big\| \partial_{\theta}^k (\bbi - \bbp)f  \big\|_{\mu}^2 +  \|\partial_\theta^k(f_0, f_1)\|_{L^2}^{2} \Big\}   \cr
& \hspace{0.5cm} + (C_{2,2}\bar{C}_{1,1}-C C_{2,1} - C) m^3 \sum_{k=0}^1\Big\{ \big\| \partial_{\theta}^k (\bbi - \bbp)f  \big\|_{\mu}^2 +   \|\partial_\theta^k(f_0, f_1)\|_{L^2}^{2} \Big\}    \leq 0.
\end{aligned}$$
We next choose $C_{2,1} > 0$ large enough so that 
 \[ C_{2,1} - C > 0, \]
and then select $C_{2,0}>0$ and $C_{2,2} > 0$ sufficiently large so that 
 \[ C_{2,0} C_{0,1}-C C_{2,1} >0, \quad C_{2,2}- CC_{2,1} - C >0, \quad C_{2,2}\bar{C}_{1,1}-C C_{2,1} - C >0,  \]
and
$$\frac{d}{dt}\mathcal{E}_2(t)+\mathcal{D}_2(t)\leq 0, \quad\|f(t)\|_{H^2}^2\simeq\mathcal{E}_2(t)\lesssim \mathcal{D}_2(t),$$
where
$$\begin{aligned}
&\mathcal{E}_2(t)= C_{2,0} m^4\Big(C_{0}\|(\partial_{\theta} f, \partial_{\theta}^{2}f)\|^2_{L^2} + \frac{1}{\sqrt{m\sigma}} \int_{\mathbb{T}} \partial_{\theta}f_1 \partial_{\theta}^2f_0 \,d\theta \Big)\\
& \hspace{1cm}+ C_{2,1}m^2 \|\partial_{\theta} \partial_{\omega} (\bbi - \bbp) f\|_{L^2}^2 + \|\partial_\omega^2 (\bbi - \bbp) f\|_{L^2}^2 + C_{2,2}m^2 \mathcal{E}_1(t)  \cr
& \mathcal{D}_2(t)=\bar C_{2,0} m^3 \sum_{k=1}^2\Big\{ \big\| \partial_{\theta}^k (\bbi - \bbp)f  \big\|_{\mu}^2 +  \|\partial_\theta^k(f_0, f_1)\|_{L^2}^{2} \Big\} + \bar C_{2,1} m \sum_{k=0}^1\|\partial_{\theta}^k \partial_{\omega} f\|_{\mu}^2 \\
& \hspace{1cm}+ \bar C_{2,2} m^3  \sum_{k=0}^1 \Big\{\left(\|\partial_{\theta}^k (\bbi - \bbp)f\|_{\mu}^2 +\|\partial_\theta^k (f_0, f_1)\|_{L^2}^2\right)\Big\}+  \frac{1}{m} \|\partial_{\omega}^{2} (\bbi - \bbp) f\|_{\mu}^2,
\end{aligned}$$
for some positive constants $\bar C_{2,0}, \bar C_{2,1}$ and $\bar C_{2,2}$, which are independent of $m$.
  Thus we have the inequality \eqref{eq_ind} with $s=2$. Suppose now that \eqref{eq_ind} holds for all $1 \leq \ell \leq s-1$:
\begin{align}\label{eq_ind2}
\begin{aligned}
\frac{d}{dt}\mathcal{E}_{\ell}(t)+\mathcal{D}_{\ell}(t)\leq 0, \quad\|f(t)\|_{H^{\ell}}^2\simeq\mathcal{E}_{\ell}(t)\lesssim \mathcal{D}_{\ell}(t).
\end{aligned}
\end{align}
It follows from Lemma \ref{L6.7} that 
$$\begin{aligned}
&\frac{d}{dt}\left( \sum_{k=0}^\ell c_{\ell+1, k} m^{2k}\|\partial_\theta^k \partial_\omega^{\ell+1-k} (\bbi - \bbp)f\|_{L^2}^2 \right) + \sum_{k=0}^{\ell} c_{\ell+1, k} m^{2k-1} \|\partial_\theta^k \partial_\omega^{\ell+1 - k} (\bbi - \bbp)f\|_\mu^2 \cr
&\quad \leq C\sum_{k=0}^\ell c_{\ell+1, k} m^{2k+1} \|\partial_\theta^{k+1} \partial_\omega^{\ell - k} (\bbi - \bbp)f\|_\mu^2 + C\sum_{k=0}^\ell c_{\ell+1, k} m^{2\ell +1}\left(\|\partial_\theta^{k+1}(f_0, f_1)\|_{L^2}^2 + \|\partial_\theta^{k+1}(\bbi - \bbp)f\|_\mu^2 \right) \cr
&\quad + C\sum_{k=0}^\ell c_{\ell+1, k} m^{2k+1}\|\partial_\omega^{\ell - k}(\bbi - \bbp)f\|_{L^2}^2+ C \frac{\kappa}{\sqrt{m\sigma}} \sum_{k=0}^\ell   c_{\ell+1, k} m^{2k}\|\partial_\omega^{\ell +1- k}(\bbi - \bbp)f\|_\mu^2\cr
&\quad + Cm^{2\ell+1}\left( \|f_1\|_{L^2}^2 + \|(\bbi - \bbp)f\|_\mu^2 \right)\cr
&\quad =: \sum_{i=1}^5 \mathcal{I}_{7i},
\end{aligned}$$
where $c_{\ell+1, k},k=0,1,\cdots,\ell$ are positive constants which will be determined later. Note that $\mathcal{I}_{71}$ can be estimated as
$$\begin{aligned}
\mathcal{I}_{71} &= C\sum_{k=1}^{\ell+1} c_{\ell+1, k-1} m^{2k-1} \|\partial_\theta^k \partial_\omega^{\ell+1 - k} (\bbi - \bbp)f\|_\mu^2\cr
&=C\sum_{k=1}^{\ell} c_{\ell +1,k-1}m^{2k-1} \|\partial_\theta^k \partial_\omega^{\ell+1 - k} (\bbi - \bbp)f\|_\mu^2 + Cc_{\ell+ 1,\ell} m^{2\ell + 1}\|\partial_\theta^{\ell+1} (\bbi - \bbp)f\|_\mu^2\cr
&\leq C\sum_{k=1}^{\ell} c_{\ell + 1,k-1}m^{2k-1} \|\partial_\theta^k \partial_\omega^{\ell+1 - k} (\bbi - \bbp) f\|_\mu^2 + C\,\mathcal{I}_{72},
\end{aligned}$$
Similarly, we also find
 $$\begin{aligned}
\mathcal{I}_{73} + \mathcal{I}_{74} &\leq C\sum_{k=0}^{\ell-1}(c_{\ell+1, k}  m^{2k+1}\|\partial_\omega^{\ell-k} (\bbi - \bbp)f\|_\mu^2  + Cc_{\ell +1, \ell} m^{2\ell+1} \|(\bbi - \bbp)f\|_\mu^2 \\
&+ C  \sum_{k=1}^{\ell} (c_{\ell+1, k}  m^{2k -1}\|\partial_\omega^{\ell-k} (\bbi - \bbp)f\|_\mu^2  + C c_{\ell + 1, 0} \frac{\kappa}{\sqrt{m\sigma}} \|\partial_\omega^{\ell+1} f\|_\mu^2  \\
&\leq C\sum_{k=0}^{\ell-1}(c_{\ell+1, k} + c_{\ell + 1,k+1}) m^{2k+1} \|\partial_\omega^{\ell-k} (\bbi - \bbp) f\|_\mu^2 + C c_{\ell +1,0} \frac{\kappa}{\sqrt{m\sigma}} \|\partial_\omega^{\ell+1} f\|_\mu^2  + C\mathcal{I}_{75} .
\end{aligned}$$
We combine the above observations to yield
\begin{align}\label{eq_ind3}
\begin{aligned}
&\frac{d}{dt}\left( \sum_{k=0}^\ell c_{\ell + 1, k} m^{2k}\|\partial_\theta^k \partial_\omega^{\ell+1-k} (\bbi - \bbp)f\|_{L^2}^2 \right) + m^{-1}(c_{\ell + 1,0}  - C c_{\ell +1,0} \frac{\kappa m}{\sqrt{m\sigma}})\|\partial_\omega^{\ell+1} f\|_\mu^2 \cr
&\quad  + \sum_{k=0}^{\ell} (c_{\ell +1, k} - Cc_{\ell + 1,k-1}) m^{2k-1} \|\partial_\theta^k \partial_\omega^{\ell+1 - k} (\bbi - \bbp)f\|_\mu^2  \cr
&\quad \leq   C\sum_{k=0}^\ell m^{2\ell+1} \left(\|\partial_\theta^{k}(f_0, f_1)\|_{L^2}^2 + \|\partial_\theta^{k}(\bbi - \bbp)f\|_\mu^2 \right) + C\sum_{k=1}^{\ell} m^{2(\ell-k)+1} \|\partial_\omega^k (\bbi - \bbp)f\|_\mu^2.
\end{aligned}
\end{align}
We can choose 
\[ c_{\ell +1. k} - Cc_{\ell + 1, k-1}> 0 \quad \mbox{for all $k = 1,\cdots,\ell$}, \]
and by assumption $\sigma \geq C m\kappa^2$ for large $C$
$$c_{\ell + 1,0}  - C c_{\ell +1,0} \frac{\kappa m}{\sqrt{m\sigma}} > 0.$$ 
Similar to the derivation of \eqref{eq_ind2}, we use proper linear combination of \eqref{e-theta}, \eqref{eq_ind2} and \eqref{eq_ind3} to obtain the desired estimate:
\begin{align}\label{eq_ind4}
\begin{aligned}
\frac{d}{dt}\mathcal{E}_{\ell+1}(t)+\mathcal{D}_{\ell+1}(t)\leq 0, \quad\|f(t)\|_{H^{\ell+1}}^2\simeq\mathcal{E}_{\ell+1}(t)\lesssim \mathcal{D}_{\ell+1}(t), \quad \mbox{ for $1 \leq \ell \leq s-1$.}
\end{aligned}
\end{align}
\end{proof}
Now we are ready to provide the details of proof of Theorem \ref{T-I}.
\begin{proof}[Proof of Theorem \ref{T-I}]
From Proposition \ref{P6.2}, we have
$$\frac{d}{dt}\mathcal{E}_s(t)+\mathcal{D}_s(t)\leq 0, \quad\|f(t)\|_{H^s}^2\simeq\mathcal{E}_s(t)\lesssim \mathcal{D}_s(t).$$
Thus, we have
\[
\frac{d\mathcal{E}_s(t)}{dt} + C\mathcal{E}_s(t) \leq 0, \quad \mbox{i.e.,} \quad \mathcal{E}_s(f(t)) \leq \mathcal{E}_s(f^{in}) e^{-Ct},
\]
for $t \in [0,T]$, where $C>0$ is independent of $t$. Hence we have
\begin{equation}\label{res_final}
\|f(t)\|_{H^s} \leq C\|f^{in}\|_{H^s}e^{-Ct} \quad \mbox{for} \quad t \in [0,T],
\end{equation}
where $C>0$ is independent of $t$. Finally, we use the continuity argument together with the above uniform-in-time a priori estimates and local-in-time existence and uniqueness results obtained in Theorem \ref{T4.1} to have the global-in-time existence and uniqueness of strong solutions to the equation \eqref{F-1}. This subsequently implies that the inequality \eqref{res_final} holds for all $t \geq 0$. This yields the desired result.
\end{proof}

%
%
%
%

\section{Conclusion} \label{sec:6}

In this paper, we presented the global-in-time existence of the unique strong solution to the Kuramoto-Sakaguchi equation with inertia. For the equation, we considered the perturbation equation around the equilibrium and show that the regularity of the initial data is preserved in time if the strength of noise is strong enough. The main methods used in the current paper are the classical energy estimates combined with macro-micro decomposition based the hyperbolic-parabolic dissipation arguments. We also provided the large-time behavior of solutions showing the convergence to the equilibrium exponentially fast as time goes to infinity. It is worth mentioning that we do not need to have the initial data close to the equilibrium.
There are several interesting open questions for the Kuramoto-Sakaguchi equation, e.g., asymptotic stability and instability of the incoherent state depending on the coupling strength. This will be pursued in a future work.

%
%
%
%

\appendix
\section{Proof of Lemma \ref{L6.7}} \label{app}

We multiply equation \eqref{F-40} by $\partial_{\theta}^i \partial_{\omega}^j (\bbi - \bbp)f$, and then integrate the resulting relation over $\mathbb{T} \times \mathbb{R}^2$ to obtain
\begin{align}\label{F-45}
\begin{aligned}
& \frac{1}{2}\frac{d}{dt} \|\partial_{\theta}^i \partial_{\omega}^j (\bbi - \bbp) f\|_{L^2}^2 + \int_{\mathbb{T}} \langle -{\mathcal L}_0 \partial_{\theta}^i \partial_{\omega}^j (\bbi - \bbp) f, \partial_{\theta}^i \partial_{\omega}^j (\bbi - \bbp) f \rangle \,d\theta \\
& \hspace{0.5cm} = \int_{\mathbb{T}} \big\langle -\partial_{\theta}^i [\partial_{\omega}^j, \omega \partial_{\theta}] (\bbi - \bbp) f, \partial_{\theta}^i \partial_{\omega}^j (\bbi - \bbp)f \big\rangle \,d\theta  \\
& \hspace{0.5cm}+ \int_{\mathbb{T}} \big\langle \partial_{\theta}^i[\partial_{\omega}^j, \mathcal{L}_0]  (\bbi - \bbp) f, \partial_{\theta}^i \partial_{\omega}^j (\bbi - \bbp)f \big\rangle \,d\theta \\
&  \hspace{0.5cm} + \int_{\mathbb{T}} \big\langle \partial_{\theta}^i \partial_{\omega}^j \mathcal{N}(f, (\bbi - \bbp) f), \partial_{\theta}^i \partial_{\omega}^j (\bbi - \bbp)f \big\rangle \,d\theta \\
& \hspace{0.5cm} + \frac{\kappa}{2\sigma} \int_{\mathbb{T}} \big \langle \partial_{\theta}^i \partial_{\omega}^j \big(\mathcal{S}[\sqrt{M}f][\omega- \nu, \bbp]f \big), \partial_{\theta}^i \partial_{\omega}^j (\bbi - \bbp)f \big \rangle \,d\theta \\
& \hspace{0.5cm} - \frac{\kappa}{m}\int_{\mathbb{T}} \big\langle \partial_{\theta}^i \partial_{\omega}^j \big( \mathcal{S}[\sqrt{M}f] [\partial_{\omega}, \bbp]f \big), \partial_{\theta}^i \partial_{\omega}^j (\bbi - \bbp)f \big\rangle \,d\theta \\
& \hspace{0.5cm}+ \int_{\mathbb{T}} \big\langle \partial_{\theta}^{i+1} \partial_{\omega}^j [\bbp, \omega]f, \partial_{\theta}^i \partial_{\omega}^j (\bbi - \bbp)f \big\rangle \,d\theta \\
& \hspace{0.5cm} =: \sum_{i=1}^6 \mci_{8i},
\end{aligned}
\end{align}
To estimate the  second term in L.H.S. of  \eqref{F-45}, we use Lemma \ref{L3.2} $(ii)$ to obtain
\begin{align}
\begin{aligned} \label{F-45-1}
& \int_{\mathbb{T}} \big\langle - {\mathcal L}_0 \partial_{\theta}^i \partial_{\omega}^j (\bbi - \bbp) f, \partial_{\theta}^i \partial_{\omega}^j (\bbi - \bbp)f \big\rangle \,d\theta \\
& \hspace{1cm} \geq \frac{\lambda_0}{m} \|\{\bbi- \bbp_0\} \partial_{\theta}^i \partial_{\omega}^j (\bbi - \bbp)f\|_{\mu}^2 \geq  \frac{\lambda_0}{m} \|\partial_{\theta}^i \partial_{\omega}^j  (\bbi - \bbp)f\|_{\mu}^2 - \frac{\lambda_0}{m} \|\bbp_0\partial_{\theta}^i \partial_{\omega}^j (\bbi - \bbp)f\|_{\mu}^2.
\end{aligned}
\end{align}
For the second R.H.S. term in  relation \eqref{F-45-1}, we note that
\begin{align*} 
\begin{aligned}
& \frac{\lambda_0}{m} \|\bbp_0 \partial_{\theta}^i \partial_{\omega}^j (\bbi - \bbp)f\|_{\mu}^2 \\
& \hspace{0.5cm}= \frac{\lambda_0}{m} \int_{\bbt} \int_{\bbr^2} \Big( (1 + \frac{m}{\sigma}(\omega - \nu)^2)|\bbp_0 \partial_{\theta}^i \partial_{\omega}^j (\bbi - \bbp)f|^2 + \frac{\sigma}{m} |\partial_\omega \big( \bbp_0 \partial_{\theta}^i \partial_{\omega}^j (\bbi - \bbp)f\big)|^2 \Big) d\omega d\nu d\theta.
\end{aligned}
\end{align*}
We use Lemma \ref{L5.5} to get
\begin{align}
\begin{aligned} \label{F-46}
|\bbp_0 \partial_\theta^i  \partial_\omega^j (\bbi - \bbp)f| &= \big|\langle \chi_0, \partial_\omega^j \partial_\theta^i (\bbi - \bbp)f \rangle  \chi_0 \big| = \big|\langle  \partial_\omega^j \chi_0, \partial_\theta^i (\bbi - \bbp)f \rangle \chi_0 \big| \\
&\leq \|\partial_\omega^{j} \chi_0\|_{L_{\omega, \nu}^2} \|\partial_\theta^i (\bbi - \bbp)f\|_{L_{\omega, \nu}^2} |\chi_0|   \lesssim \big(\frac{m}{\sigma}\big)^{\frac{j}{2}} \|\partial_\theta^i (\bbi - \bbp)f\|_{L_{\omega, \nu}^2} |\chi_0|.
\end{aligned}
\end{align}
Thus we obtain
\begin{align}\label{F-46-1}
\begin{aligned}
& \int_{\bbr^2} \Big( (1 + \frac{m}{\sigma}(\omega - \nu)^2)|\bbp_0 \partial_{\theta}^i \partial_{\omega}^j (\bbi - \bbp)f|^2 \,d\omega d\nu \\
& \hspace{0.5cm} \lesssim \big(\frac{m}{\sigma}\big)^{j} \|\partial_{\theta}^{i} (\bbi -\bbp)f\|_{L_{\omega, \nu}^2}^2 \int_{\bbr^2} \big(1 + \frac{m}{\sigma}(\omega - \nu)^2\big) \chi_0^2 \,d\omega d\nu = 2 \big(\frac{m}{\sigma}\big)^{j}\|\partial_{\theta}^{i} (\bbi -\bbp)f\|_{L_{\omega, \nu}^2}^2.
\end{aligned}
\end{align}
Note that
\begin{eqnarray*}
\partial_{\omega }\big(\bbp_0 \partial_{\omega}^j \partial_\theta^i (\bbi - \bbp)f \big) = \langle\chi_0, \partial_\omega^j \partial_\theta^i (\bbi - \bbp)f \rangle \partial_{\omega} \chi_0 = (-1)^{j+1} \frac{1}{2} \sqrt{\frac{m}{\sigma}}  \langle \partial_{\omega}^j \chi_0, \partial_\theta^i (\bbi - \bbp)f \rangle \chi_1.
\end{eqnarray*}
Similar to  estimates in \eqref{F-46}, we also find
\[ |\partial_{\omega}(\bbp_0 \partial_{\omega}^j \partial_\theta^i (\bbi - \bbp)f)| \lesssim \big(\frac{m}{\sigma}\big)^{\frac{j+1}{2}} \|\partial_\theta^i (\bbi - \bbp)f\|_{L_{\omega, \nu}^2} |\chi_1|,
\]
and this yields
\begin{align}
\begin{aligned} \label{F-46-2}
& \int_{\bbr^2}  \frac{\sigma}{m} |\partial_\omega \big( \bbp\partial_{\theta}^i \partial_{\omega}^j (\bbi - \bbp)f\big)|^2 \,d\omega d\nu \\
& \hspace{0.5cm}\lesssim  \big(\frac{m}{\sigma}\big)^{j} \| \partial_\theta^i (\bbi - \bbp)f\|_{L_{\omega, \nu}^2}^2 \int_{\bbr^2}  |\chi_1|^2 \,d\omega d\nu = \big(\frac{m}{\sigma}\big)^{j} \|\partial_\theta^i(\bbi - \bbp)f\|_{L_{\omega, \nu}^2}^2.
\end{aligned}
\end{align}
We then combine the estimates \eqref{F-46-1} and \eqref{F-46-2} to obtain
\begin{equation} \label{F-47}
\frac{\lambda_0}{m} \|\bbp\partial_{\theta}^i \partial_{\omega}^j (\bbi - \bbp)f\|_{\mu}^2 \lesssim \frac1m \big(\frac{m}{\sigma}\big)^{j} \|\partial_\theta^i (\bbi - \bbp)f\|_{L^2}^2.
\end{equation}
Now we substitute the estimate \eqref{F-47} into the relation \eqref{F-45} to get
\begin{align}\label{F-48}
\begin{aligned}
\int_{\mathbb{T}} \big\langle - {\mathcal L}_0 \partial_{\theta}^i \partial_{\omega}^j (\bbi - \bbp) f, \partial_{\theta}^i \partial_{\omega}^j (\bbi - \bbp)f \big\rangle d\theta
  \gtrsim \frac{\lambda_0}{m} \|\partial_{\theta}^i \partial_{\omega}^j (\bbi - \bbp)f\|_{\mu}^2 - \frac1m \big(\frac{m}{\sigma}\big)^{j} \|  \partial_{\theta}^i (\bbi - \bbp)f\|_{L^2}^2.
\end{aligned}
\end{align}
Thus, we combine the estimates \eqref{F-48} and \eqref{F-45} to get
\begin{equation} \label{F-49}
\frac{d}{dt} \|\partial_{\theta}^i \partial_{\omega}^j (\bbi - \bbp) f\|_{L^2}^2 + \frac{\lambda_0}{m} \|\partial_{\theta}^i \partial_{\omega}^j (\bbi - \bbp)f\|_{\mu}^2 \lesssim \frac1m \big(\frac{m}{\sigma}\big)^{j} \|  \partial_{\theta}^i (\bbi - \bbp)f\|_{L^2}^2 + \sum_{i=1}^6 \mci_{8i}.
\end{equation}
In the sequel, we come to estimate terms $\mci_{8i}, i = 1, \cdots 6$. \newline

\noindent $\bullet$ (Estimate on $\mci_{81}$): We use Cauchy's inequality with $\epsilon > 0$ and Lemma \ref{L5.6} $(i)$ to get
\begin{align*}
\begin{aligned}
& \Big| \big\langle -\partial_{\theta}^i [\partial_{\omega}^j, \omega \partial_{\theta}] (\bbi - \bbp) f, \partial_{\theta}^i \partial_{\omega}^j (\bbi - \bbp)f \big\rangle \Big| \\
& \hspace{1cm} \leq \frac{\epsilon}{m} \|\partial_{\theta}^i \partial_{\omega}^j (\bbi - \bbp)f\|^2_{L_{\omega, \nu}^2} + \frac{m}{4\epsilon} \|\partial_{\theta}^i [\partial_{\omega}^j, \omega \partial_{\theta}] (\bbi - \bbp) f\|^2_{L_{\omega, \nu}^2} \\
& \hspace{1cm}  \lesssim \frac{\epsilon}{m} \|\partial_{\theta}^i \partial_{\omega}^j (\bbi - \bbp)f\|^2_{L_{\omega, \nu}^2} + \frac{m}{4\epsilon} \|\partial_{\theta}^{i+1} \partial_{\omega}^{j-1} (\bbi - \bbp) f\|^2_{L_{\omega, \nu}^2},
\end{aligned}
\end{align*}
where $\epsilon$ will be determined later. Thus we get
\[
|\mci_{81}| \lesssim \frac{\epsilon}{m} \|\partial_{\theta}^i \partial_{\omega}^j (\bbi - \bbp)f\|^2_{L^2} + \frac{m}{4\epsilon} \|\partial_{\theta}^{i+1} \partial_{\omega}^{j-1} (\bbi - \bbp) f\|^2_{L^2}.
\]

\vspace{0.5cm}

\noindent $\bullet$ (Estimate on $\mci_{82}$): Note that for any function $h(\theta, \omega, \nu ,t)$, we find
\begin{align*}
\begin{aligned}
\mathcal{L}_0 h &=  \frac{\sigma}{m^2}  \frac{1}{\sqrt{M}} \partial_{\omega} \bigg[  M \partial_{\omega} \Big( \frac{h}{\sqrt{M}}  \Big) \bigg] = \frac{\sigma}{m^{2}} \partial_{\omega}^{2}h + \frac{1}{2m} \big( 1 - \frac{m}{2\sigma}(\omega - \nu)^2 \big)h.
\end{aligned}
\end{align*}
This together with a straightforward computation yields
\[
[\partial_{\omega}^j, \mathcal{L}_0] h  = \frac{\sigma}{m^{2}}[\partial_{\omega}^j,  \partial_{\omega}^{2}]h + \frac{1}{2m} [\partial_{\omega}^j,  1 - \frac{m}{2\sigma}(\omega - \nu)^2] h = \frac{1}{4\sigma} [(\omega - \nu)^2, \partial_{\omega}^j] h.
\]
We use the above identity to have
\begin{eqnarray*}
\mathcal{I}_{82} = \frac{1}{4\sigma} \int_{\mathbb{T}} \big\langle \partial_{\theta}^i  [(\omega - \nu)^2, \partial_{\omega}^j] (\bbi - \bbp) f, \partial_{\theta}^i \partial_{\omega}^j (\bbi - \bbp)f \big\rangle \,d\theta.
\end{eqnarray*}
By Lemma \ref{L5.6} $(ii)$,  we get
\begin{eqnarray*}
&& \Big| \big\langle \partial_{\theta}^i[(\omega - \nu)^2, \partial_{\omega}^j] (\bbi - \bbp) f, \partial_{\theta}^i \partial_{\omega}^j (\bbi - \bbp)f \big\rangle \Big| \\
&& \hspace{1cm} \lesssim \Big| \big\langle \partial_{\theta}^i \partial_{\omega}^{j-2} (\bbi - \bbp) f, \partial_{\theta}^i \partial_{\omega}^j (\bbi - \bbp) f \big\rangle \Big| + \Big| \big\langle (\omega - \nu) \partial_{\theta}^i \partial_{\omega}^{j-1} (\bbi - \bbp) f, \partial_{\theta}^i \partial_{\omega}^j (\bbi - \bbp)f \big\rangle \Big| \\
&& \hspace{1cm} \lesssim \|\partial_{\theta}^i \partial_{\omega}^{j-1} (\bbi - \bbp)f\|_{L_{\mu}^2}^2 + \frac{\sigma \epsilon}{m} \|\partial_{\theta}^i \partial_{\omega}^j (\bbi - \bbp)f\|_{L_{\omega, \nu}^2}^2 +  \frac{1}{\epsilon} \|\partial_{\theta}^i \partial_{\omega}^{j-1} (\bbi - \bbp)f\|_{L_{\mu}^2}^2,
\end{eqnarray*}
where we used $\big\langle \partial_{\theta}^i \partial_{\omega}^{j-2} (\bbi - \bbp) f, \partial_{\theta}^i \partial_{\omega}^j (\bbi - \bbp) f \big\rangle = \|\partial_{\theta}^i \partial_{\omega}^{j-1} (\bbi - \bbp) f\|_{L_{\omega, \nu}^{2}}$.
Thus we have
\[
|\mathcal{I}_{82}|\lesssim \frac{\varepsilon}{m} \|\partial_{\theta}^i \partial_{\omega}^j (\bbi - \bbp)f\|_{L^2}^2 + ( \frac{1}{\sigma}+ \frac{1}{\sigma\varepsilon}) \|\partial_{\theta}^i \partial_{\omega}^{j-1}(\bbi - \bbp)f\|_{\mu}^2.
\]

\vspace{0.5cm}

\noindent $\bullet$ (Estimate on $\mci_{83}$): Note that
\begin{align}
\begin{aligned} \label{F-50}
\mci_{83} &= \frac{\kappa}{2\sigma} \int_{\bbt} \langle \partial_{\theta}^i \partial_{\omega}^j \big( \mathcal{S}[\sqrt{M} f](\omega - \nu) (\bbi - \bbp)f \big), \partial_{\theta}^i \partial_{\omega}^j (\bbi - \bbp)  f \rangle \,d\theta \\
& - \frac{\kappa}{m} \int_{\bbt} \langle \partial_{\theta}^i \partial_{\omega}^j \big( \mathcal{S}[\sqrt{M} f]\partial_{\omega}(\bbi - \bbp)f \big), \partial_{\theta}^i \partial_{\omega}^j (\bbi - \bbp)  f \rangle\, d\theta =: \mathcal{I}_{83}^{1} + \mathcal{I}_{83}^{2}.
\end{aligned}
\end{align}

\noindent $\diamond$ (Estimate on $\mathcal{I}_{83}^{1}$): We use the following identity
\[ \partial_{\omega}^j \big( (\omega - \nu)h \big) = j \partial_{\omega}^{j-1} h + (\omega - \nu) \partial_{\omega}^j h \]
to get
\begin{align*}
\begin{aligned}
\mathcal{I}_{83}^{1}
&  = \frac{\kappa}{2\sigma} \sum_{0 \leq \ell \leq i} \binom{i}{\ell}  \int_{\bbt} \langle \partial_{\theta}^{i-\ell} \mathcal{S}[\sqrt{M} f]  \partial_{\theta}^{\ell} \partial_{\omega}^j \big( (\omega - \nu) (\bbi - \bbp)f \big), \partial_{\theta}^i \partial_{\omega}^j (\bbi - \bbp)f \rangle \,d\theta \\
&  = \frac{\kappa}{2\sigma} \sum_{0 \leq \ell \leq i} \binom{i}{\ell}  \int_{\bbt} \langle j\partial_{\theta}^{i-\ell} \mathcal{S}[\sqrt{M} f]  \partial_{\theta}^{\ell} \partial_{\omega}^{j-1} (\bbi - \bbp)f, \partial_{\theta}^i \partial_{\omega}^j (\bbi - \bbp)f \rangle \,d\theta \\
& + \frac{\kappa}{2\sigma} \sum_{0 \leq \ell \leq i} \binom{i}{\ell}  \int_{\bbt} \langle \partial_{\theta}^{i-\ell} \mathcal{S}[\sqrt{M} f] (\omega - \nu) \partial_{\theta}^{\ell} \partial_{\omega}^{j} (\bbi - \bbp)f, \partial_{\theta}^i \partial_{\omega}^j (\bbi - \bbp)f \rangle \,d\theta \\
& =: \mci_{83}^{11} + \mci_{83}^{12}.
\end{aligned}
\end{align*}
We use Lemma \ref{L6.1} $(i)$ and Cauchy's inequality with $\epsilon$ to obtain
\begin{align}
\begin{aligned} \label{F-50-1}
 |\mathcal{I}_{83}^{11}|
& \lesssim \frac{\kappa}{\sigma} \sum_{0 \leq \ell \leq i} \|\partial_{\theta}^{i-\ell} \mathcal{S}[\sqrt{M} f]\|_{L^{\infty}} \left| \int_{\bbt} \langle \partial_{\theta}^{\ell} \partial_{\omega}^{j-1} (\bbi - \bbp)f, \partial_{\theta}^i \partial_{\omega}^j (\bbi - \bbp)f \rangle \,d\theta \right| \\
& \lesssim \frac{\kappa}{\sigma} \sum_{0 \leq \ell \leq i}  \left| \int_{\bbt} \langle \partial_{\theta}^{\ell} \partial_{\omega}^{j-1} (\bbi - \bbp)f, \partial_{\theta}^i \partial_{\omega}^j (\bbi - \bbp)f \rangle \,d\theta \right| \\
& \lesssim  \frac{\kappa}{\sigma} \sum_{0 \leq \ell \leq i}  \Big( \frac{\epsilon \sigma}{m \kappa} \|\partial_{\theta}^i \partial_{\omega}^j (\bbi - \bbp)f\|_{L^2}^{2} + \frac{m \kappa} {\epsilon \sigma} \|\partial_{\theta}^{\ell} \partial_{\omega}^{j-1} (\bbi - \bbp)f\|_{L^2}^{2}  \Big) \\
& \lesssim \frac{\epsilon}{m} \|\partial_{\theta}^i \partial_{\omega}^j (\bbi - \bbp)f\|_{L^2}^{2} + \frac{m \kappa^2} {\epsilon \sigma^2} \sum_{0 \leq \ell \leq i} \|\partial_{\theta}^{\ell} \partial_{\omega}^{j-1} (\bbi - \bbp)f\|_{L^2}^{2}.
\end{aligned}
\end{align}
For the estimate $\mathcal{I}_{83}^{12}$, we use Lemma \ref{L6.1} $(i)$ to get
\begin{align}
\begin{aligned} \label{F-50-2}
 |\mathcal{I}_{83}^{12}|
& \lesssim \frac{\kappa}{\sigma} \sum_{0 \leq \ell \leq i} \|\partial_{\theta}^{i-\ell} \mathcal{S}[\sqrt{M} f]\|_{L^{\infty}} \left| \sqrt{\frac{\sigma}{m}} \int_{\bbt} \langle  \sqrt{\frac{m}{\sigma}} (\omega - \nu) \partial_{\theta}^{\ell} \partial_{\omega}^{j} (\bbi - \bbp)f, \partial_{\theta}^i \partial_{\omega}^j (\bbi - \bbp)f \rangle \,d\theta \right| \\
& \lesssim \frac{\kappa}{\sqrt{m\sigma}} \sum_{0 \leq \ell \leq i}  \Big(  \|\partial_{\theta}^i \partial_{\omega}^j (\bbi - \bbp)f\|_{L^2}^{2} + \|\partial_{\theta}^{\ell} \partial_{\omega}^{j} (\bbi - \bbp)f\|_{\mu}^{2}  \Big)  \lesssim  \frac{\kappa}{\sqrt{m\sigma}} \sum_{0 \leq \ell \leq i}   \|\partial_{\theta}^{\ell} \partial_{\omega}^{j} (\bbi - \bbp)f\|_{\mu}^{2}.
\end{aligned}
\end{align}
We combine the estimates \eqref{F-50-1} and \eqref{F-50-2} to get
\begin{align}
\begin{aligned} \label{F-50-4}
|\mathcal{I}_{83}^{1}|
& \lesssim \frac{\epsilon}{m} \|\partial_{\theta}^i \partial_{\omega}^j (\bbi - \bbp)f\|_{L^2}^{2} + \frac{m \kappa^2} {\epsilon \sigma^2} \sum_{0 \leq \ell \leq i} \|\partial_{\theta}^{\ell} \partial_{\omega}^{j-1} (\bbi - \bbp)f\|_{L^2}^{2} +  \frac{\kappa}{\sqrt{m\sigma}} \sum_{0 \leq \ell \leq i}   \|\partial_{\theta}^{\ell} \partial_{\omega}^{j} (\bbi - \bbp)f\|_{\mu}^{2}.
\end{aligned}
\end{align}

\noindent $\diamond$ (Estimate on $\mathcal{I}_{83}^{2}$):  Direct calculations yield
\begin{align*}
\begin{aligned}
\mathcal{I}_{83}^{2}
& = - \frac{\kappa}{m}  \sum_{0 \leq \ell \leq i} \binom{i}{\ell} \int_{\bbt} \langle \partial_{\theta}^{i- \ell} \mathcal{S}[\sqrt{M} f]\partial_{\theta}^{\ell}  \partial_{\omega}^{j+1} (\bbi - \bbp)f, \partial_{\theta}^i \partial_{\omega}^j (\bbi - \bbp)  f \rangle \,d\theta \\
& =  \frac{\kappa}{m}  \sum_{0 \leq \ell \leq i} \binom{i}{\ell} \int_{\bbt} \langle \partial_{\theta}^{i- \ell} \mathcal{S}[\sqrt{M} f]\partial_{\theta}^{\ell}  \partial_{\omega}^{j} (\bbi - \bbp)f, \partial_{\omega} \big( \partial_{\theta}^i \partial_{\omega}^j (\bbi - \bbp)  f\big)  \rangle \,d\theta.
\end{aligned}
\end{align*}
Now we use Lemma \ref{L6.1} $(i)$ and Cauchy's inequality to obtain
\begin{align}
\begin{aligned} \label{F-50-3}
|\mathcal{I}_{83}^{2}|
&\lesssim  \frac{\kappa}{m}  \sum_{0 \leq \ell \leq i} \|\partial_{\theta}^{i- \ell} \mathcal{S}[\sqrt{M} f]\|_{L^{\infty}} \sqrt{\frac{m}{\sigma}} \int_{\bbt} \Big| \langle \partial_{\theta}^{\ell}  \partial_{\omega}^{j} (\bbi - \bbp)f,\sqrt{\frac{\sigma}{m}} \partial_{\omega} \big( \partial_{\theta}^i \partial_{\omega}^j (\bbi - \bbp)  f\big)  \rangle \Big| d\theta\\
& \lesssim \frac{\kappa}{\sqrt{m \sigma}} \sum_{0 \leq \ell \leq i} \Big( \|\partial_{\theta}^{\ell}  \partial_{\omega}^{j} (\bbi - \bbp)f\|_{L^2}^2 + \|\partial_{\theta}^{i}  \partial_{\omega}^{j} (\bbi - \bbp)f\|_{\mu}^2 \Big) \\
&  \lesssim \frac{\kappa}{\sqrt{m \sigma}} \sum_{0 \leq \ell \leq i} \|\partial_{\theta}^{\ell}  \partial_{\omega}^{j} (\bbi - \bbp)f\|_{\mu}^2.
\end{aligned}
\end{align}
Combining the estimates \eqref{F-50}, \eqref{F-50-4}, and \eqref{F-50-3} gives
\begin{align*}
\begin{aligned}
|\mathcal{I}_{83}|
& \lesssim \frac{\epsilon}{m} \|\partial_{\theta}^i \partial_{\omega}^j (\bbi - \bbp)f\|_{L^2}^{2} + \frac{m \kappa^2} {\epsilon \sigma^2} \sum_{0 \leq \ell \leq i} \|\partial_{\theta}^{\ell} \partial_{\omega}^{j-1} (\bbi - \bbp)f\|_{L^2}^{2}  +  \frac{\kappa}{\sqrt{m\sigma}} \sum_{0 \leq \ell \leq i}   \|\partial_{\theta}^{\ell} \partial_{\omega}^{j} (\bbi - \bbp)f\|_{\mu}^{2}.
\end{aligned}
\end{align*}

\vspace{0.5cm}

\noindent $\bullet$ (Estimate on $\mci_{84}$): By a straightforward calculation, we estimate
\begin{align}
\begin{aligned} \label{F-53}
|\mci_{84}| &= \Big| \frac{\kappa}{2\sigma} \sum_{0 \leq \ell \leq i} \binom{k}{\ell} \int_{\mathbb{T}} \big\langle  \partial_{\theta}^{i-\ell} \mathcal{S}[\sqrt{M} f]  \partial_{\theta}^{\ell} \partial_{\omega}^j [\omega - \nu, \bbp]f, \partial_{\theta}^i \partial_{\omega}^j (\bbi - \bbp)f \big\rangle \,d\theta\Big| \\
& \lesssim \frac{\kappa}{\sigma} \sum_{0 \leq \ell \leq i} \|\partial_{\theta}^{i-\ell} \mathcal{S}[\sqrt{M} f]\|_{L^\infty} \Big( \sqrt{\frac{\sigma}{m}} \|\partial_{\theta}^i \partial_{\omega}^j (\bbi - \bbp)f\|_{L_{2}}^2 + \sqrt{\frac{m}{\sigma}} \|\partial_{\theta}^{\ell} \partial_{\omega}^j [\omega - \nu, \bbp ]f\|_{L_{2}}^2 \Big)\\
&\lesssim \frac{\kappa}{\sqrt{m\sigma}} \|\partial_{\theta}^i \partial_{\omega}^j (\bbi - \bbp)f\|_{L^2}^2 + \frac{\kappa}{\sigma} \sqrt{\frac{m}{\sigma}} \sum_{0 \leq \ell \leq i} \|\partial_{\theta}^{\ell} \partial_{\omega}^j [\omega - \nu, \bbp ]f\|_{L^2}^2.
\end{aligned}
\end{align}
Then it follows from Lemma \ref{L5.6} $(iii)$ and Lemma \ref{L5.5} that
\begin{align*}
\begin{aligned}
 \|\partial_{\theta}^{\ell} \partial_{\omega}^j [\omega - \nu, \bbp ]f\|_{L_{\omega, \nu}^2}^2 &\leq \int_{\bbr^2} |\partial_{\theta}^{\ell}f_1|^2 |\partial_{\omega}^j \big( (\omega - \nu)\chi_1 \big)|^2 \,d\omega d\nu + \frac{\sigma}{m} \int_{\bbr^2} |\partial_{\theta}^{\ell}f_1|^2 | |\partial_{\omega}^{j}  \chi_0|^2 \,d\omega d\nu \\
 &+  \int_{\bbr^2} \big|\langle \chi_1, (\omega - \nu)\partial_{\theta}^{\ell} (\bbi - \bbp)f \rangle \big|^2 |\partial_{\omega}^{j}  \chi_0|^2 \,d\omega d\nu \\
 & \lesssim \big(\frac{m}{\sigma} \big)^{j-1} |\partial_{\theta}^{\ell}f_1|^2 + \frac{\sigma}{m}|\partial_{\theta}^{\ell}f_1|^2 \times  \big(\frac{m}{\sigma} \big)^{j} + \frac{\sigma}{m}| \|\partial_{\theta}^{\ell} (\bbi - \bbp)f\|_{L_\mu^2}^2 \times \big(\frac{m}{\sigma} \big)^{j} \\
 & \lesssim  \big(\frac{m}{\sigma} \big)^{j-1} |\partial_{\theta}^{\ell}f_1|^2 +  \big(\frac{m}{\sigma} \big)^{j-1}  \|\partial_{\theta}^{\ell} (\bbi - \bbp)f\|_{L_\mu^2}^2,
\end{aligned}
\end{align*}
where we used the following estimate for the second inequality.
\begin{align*}
\begin{aligned}
\big|\langle \chi_1, (\omega - \nu)\partial_{\theta}^{\ell} (\bbi - \bbp)f \rangle \big| &\leq \bigg(\int_{\bbr^2} \chi_1^2 d\omega d\nu \bigg)^{\frac{1}{2}}  \bigg( \frac{\sigma}{m} \int_{\bbr^2} \frac{m}{\sigma} (\omega -\nu)^2 |\partial_{\theta}^{\ell} (\bbi - \bbp)f|^2 d\omega d\nu \bigg)^{\frac{1}{2}} \\
& \leq \sqrt{ \frac{\sigma}{m}} \|\partial_{\theta}^{\ell} (\bbi - \bbp)f\|_{L_\mu^2}.
\end{aligned}
\end{align*}
Then we have
\begin{align}
\begin{aligned} \label{F-53-1}
 \|\partial_{\theta}^{\ell} \partial_{\omega}^j [\omega - \nu, \bbp ]f\|_{L^2}^2
 & \lesssim  \big(\frac{m}{\sigma} \big)^{j-1} \|\partial_{\theta}^{\ell}f_1\|_{L^2}^2 +  \big(\frac{m}{\sigma} \big)^{j-1}  \|\partial_{\theta}^{\ell} (\bbi - \bbp)f\|_{\mu}^2.
\end{aligned}
\end{align}
Now we substitute \eqref{F-53-1} into  \eqref{F-53} to obtain
\begin{align*}
\begin{aligned}
|\mci_{84}|
&\lesssim \frac{\kappa}{\sqrt{m\sigma}} \|\partial_{\theta}^i \partial_{\omega}^j (\bbi - \bbp)f\|_{L^2}^2 + \frac{\kappa}{\sigma} \sqrt{\frac{m}{\sigma}} \sum_{0 \leq \ell \leq i} \Big(  \big(\frac{m}{\sigma} \big)^{j-1} \|\partial_{\theta}^{\ell}f_1\|_{L^2}^2 +  \big(\frac{m}{\sigma} \big)^{j-1}  \|\partial_{\theta}^{\ell} (\bbi - \bbp)f\|_{\mu}^2 \Big) \\
& \lesssim \frac{\kappa}{\sqrt{m\sigma}} \|\partial_{\theta}^i \partial_{\omega}^j (\bbi - \bbp)f\|_{L^2}^2 +\frac{\kappa}{\sqrt{m\sigma}} \big(\frac{m}{\sigma} \big)^{j}  \sum_{0 \leq \ell \leq i} \Big(  \|\partial_{\theta}^{\ell}f_1\|_{L^2}^2 + \|\partial_{\theta}^{\ell} (\bbi - \bbp)f\|_{\mu}^2 \Big).
\end{aligned}
\end{align*}

\vspace{0.5cm}

\noindent $\bullet$ (Estimate on $\mci_{85}$): Similarly as before, we use Lemma \ref{L6.1} $(i)$ and Cauchy's inequality to obtain
\begin{align}
\begin{aligned} \label{E-69}
|\mci_{85}| &= \bigg| \frac{\kappa}{m} \sum_{0 \leq \ell \leq i} \binom{k}{\ell} \int_{\bbt} \partial_{\theta}^{i-\ell} \mathcal{S}[\sqrt{M} f] \big\langle \partial_{\theta}^{\ell} \partial_{\omega}^j [\partial_{\omega}, \bbp]f, \partial_{\theta}^i \partial_{\omega}^j (\bbi - \bbp)f \big\rangle \,d\theta \bigg| \\
&\lesssim \frac{\kappa}{m} \sum_{0 \leq \ell \leq i} \|\partial_{\theta}^{i-\ell} \mathcal{S}[\sqrt{M} f]\|_{L^{\infty}} \int_{\bbt} \Big| \big\langle \partial_{\theta}^{\ell} \partial_{\omega}^j [\partial_{\omega}, \bbp]f, \partial_{\theta}^i \partial_{\omega}^j (\bbi - \bbp)f \big\rangle\Big| d\theta \\
& \lesssim \frac{\kappa}{m} \sum_{0 \leq \ell \leq i}  \Big( \sqrt{\frac{m}{\sigma}} \|\partial_{\theta}^i \partial_{\omega}^j (\bbi - \bbp)f\|_{L_{2}^2}^2 + \sqrt{\frac{\sigma}{m}} \|\partial_{\theta}^{\ell} \partial_{\omega}^j [\partial_{\omega}, \bbp ]f\|_{L_{2}^2}^2  \Big) \\
&\lesssim \frac{\kappa}{\sqrt{m\sigma}} \|\partial_{\theta}^i \partial_{\omega}^j (\bbi - \bbp)f\|_{L^2}^2 + \frac{\kappa}{m} \sqrt{\frac{\sigma}{m}} \sum_{0 \leq \ell \leq i} \|\partial_{\theta}^{\ell} \partial_{\omega}^j [\partial_{\omega}, \bbp ]f\|_{L^2}^2.
\end{aligned}
\end{align}
To estimate the second R.H.S. term in inequality \eqref{E-69}, we use the fact that
\begin{align*}
\begin{aligned}
& \Big| \big\langle \chi_0, \partial_{\omega} \partial_{\theta}^{\ell} (\bbi - \bbp)f \big\rangle \Big| \leq \sqrt{\frac{m}{\sigma}} \| \partial_{\theta}^{\ell} (\bbi - \bbp)f\|_{L_{\mu}^{2}}, \quad \Big| \big\langle \chi_1, \partial_{\omega}\partial_{\theta}^{\ell} (\bbi - \bbp)f \big\rangle \Big| \leq \sqrt{\frac{m}{\sigma}} \| \partial_{\theta}^{\ell} (\bbi - \bbp)f\|_{L_{\mu}^{2}},
\end{aligned}
\end{align*}
together with Lemma \ref{L5.6} $(v)$ and Lemma \ref{L5.5} to get
\begin{align*}
\begin{aligned}
 \|\partial_{\theta}^{\ell} \partial_{\omega}^j [\partial_{\omega}, \bbp ]f\|_{L_{\omega, \nu}^{2}}^2 &\lesssim \big(\frac{m}{\sigma}\big)^2  |\partial_{\theta}^{\ell} f_1|^2  \int_{\bbr^2}\Big|\partial_{\omega}^{j}\big( (\omega - \nu)\chi_1 \big) \Big|^2 d\omega d\nu +   \frac{m}{\sigma}  |\partial_{\theta}^{\ell} f_1|^2   \int_{\bbr^2}\big|\partial_{\omega}^{j} \chi_0 \big|^2 d\omega d\nu \\
&  +\int_{\bbr^2}\Big| \big\langle \chi_0, \partial_{\omega} \partial_{\theta}^{\ell} (\bbi - \bbp)h \big\rangle \Big|^2 |\partial_{\omega}^{j}  \chi_0|^2 d\omega d\nu +  \int_{\bbr^2} \Big|\big\langle \chi_1, \partial_{\omega}\partial_{\theta}^{\ell} (\bbi - \bbp)h \big\rangle \Big|^2  |\partial_{\omega}^{j}  \chi_1|^2 d\omega d\nu \\
& \lesssim \big(\frac{m}{\sigma}\big)^{j+1} |\partial_{\theta}^{\ell} f_1|^2 +  \big(\frac{m}{\sigma}\big)^{j+1}  \| \partial_{\theta}^{\ell} (\bbi - \bbp)f\|_{L_{\mu}^{2}}^2.
\end{aligned}
\end{align*}
This yields
\begin{align}
\begin{aligned} \label{E-69-1}
 \|\partial_{\theta}^{\ell} \partial_{\omega}^j [\partial_{\omega}, \bbp ]f\|_{L^{2}}^2
 \lesssim \big(\frac{m}{\sigma}\big)^{j+1} \|\partial_{\theta}^{\ell} f_1\|_{L^2}^2 +  \big(\frac{m}{\sigma}\big)^{j+1}  \| \partial_{\theta}^{\ell} (\bbi - \bbp)f\|_{\mu}^2.
\end{aligned}
\end{align}
We substitute \eqref{E-69-1} into \eqref{E-69} to deduce
\begin{align*}
\begin{aligned}
|\mci_{85}|
&\lesssim \frac{\kappa}{\sqrt{m\sigma}} \|\partial_{\theta}^i \partial_{\omega}^j (\bbi - \bbp)f\|_{L^2}^2 + \frac{\kappa}{m} \sqrt{\frac{\sigma}{m}} \sum_{0 \leq \ell \leq i} \Big( \big(\frac{m}{\sigma}\big)^{j+1} \|\partial_{\theta}^{\ell} f_1\|_{L^2}^2 +  \big(\frac{m}{\sigma}\big)^{j+1}  \| \partial_{\theta}^{\ell} (\bbi - \bbp)f\|_{\mu}^2 \Big) \\
& \lesssim \frac{\kappa}{\sqrt{m\sigma}} \|\partial_{\theta}^i \partial_{\omega}^j (\bbi - \bbp)f\|_{L^2}^2 + \frac{\kappa}{\sqrt{m\sigma}} \big(\frac{m}{\sigma}\big)^{j}  \sum_{0 \leq \ell \leq i} \Big( \|\partial_{\theta}^{\ell} f_1\|_{L^2}^2 +  \| \partial_{\theta}^{\ell} (\bbi - \bbp)f\|_{\mu}^2 \Big).
\end{aligned}
\end{align*}

\vspace{0.5cm}

\noindent $\bullet$ (Estimate on $\mci_{86}$): We split $\mci_{86}$ into two terms:
\begin{align}
\begin{aligned} \label{F-58}
\mci_{86} &= \int_{\mathbb{T}} \big\langle \partial_{\theta}^{i+1} \partial_{\omega}^j [\bbp, \omega - \nu]f, \partial_{\theta}^i \partial_{\omega}^j (\bbi - \bbp)f \big\rangle d\theta \\
& + \int_{\mathbb{T}} \big\langle \partial_{\theta}^{i+1} \partial_{\omega}^j [\bbp,  \nu]f, \partial_{\theta}^i \partial_{\omega}^j (\bbi - \bbp)f \big\rangle d\theta =: \mci_{86}^1 + \mci_{86}^2.
\end{aligned}
\end{align}

\noindent $\diamond$ (Estimate on $\mathcal{I}_{86}^{1}$): We take $\ell = i+1$ in \eqref{F-53-1} to get
\begin{align*}
\begin{aligned}
 \|\partial_{\theta}^{i+1} \partial_{\omega}^j [\bbp, \omega - \nu ]f\|_{L^2}^2
 & \lesssim  \big(\frac{m}{\sigma} \big)^{j-1} \|\partial_{\theta}^{i+1}f_1\|^2 +  \big(\frac{m}{\sigma} \big)^{j-1}  \|\partial_{\theta}^{i+1} (\bbi - \bbp)f\|_{\mu}^2.
\end{aligned}
\end{align*}
This deduces
\begin{align}
\begin{aligned} \label{F-72-1}
|\mathcal{I}_{86}^{1}| &\lesssim \frac{\epsilon}{m} \|\partial_{\theta}^i \partial_{\omega}^j (\bbi - \bbp)f\|_{L^2}^{2} + \frac{m}{\epsilon} \|\partial_{\theta}^{i+1} \partial_{\omega}^j [\omega - \nu, \bbp ]f\|_{L^2}^2 \\
 & \lesssim  \frac{\epsilon}{m} \|\partial_{\theta}^i \partial_{\omega}^j (\bbi - \bbp)f\|_{L^2}^{2} +  \frac{m}{\epsilon} \big(\frac{m}{\sigma} \big)^{j-1} \Big( \|\partial_{\theta}^{i+1}f_1\|^2 +   \|\partial_{\theta}^{i+1} (\bbi - \bbp)f\|_{\mu}^2 \Big).
\end{aligned}
\end{align}

\noindent $\diamond$ (Estimate on $\mathcal{I}_{86}^{2}$): Note that
\begin{align*}
\begin{aligned}
 \bigg(\int_{\bbr} \nu g(\nu) \,d\nu \bigg)^2 &\leq  \int_{\bbr} g(\nu)\,d\nu  \int_{\bbr} (1+ \nu^2) g(\nu)\,d\nu = \|g\|_{\nu}^2, \\
 \Big|  \langle\chi_0, \nu \partial_{\theta}^{i+1}(\bbi - \bbp) f \rangle  \Big| & \leq \Big( \int_{\bbr^2} \nu^2 \chi_0^2 \,d\omega d\nu \Big)^{\frac{1}{2}} \Big(\int_{\bbr^2} |\partial_{\theta}^{i+1}(\bbi - \bbp) f |^2 \,d\omega d\nu \Big)^{\frac{1}{2}}\cr
 &  \leq \|g\|_{\nu} \|\partial_{\theta}^{i+1}(\bbi - \bbp) f \|_{L_{\omega, \nu}^{2}}, \\
  \Big|  \langle\chi_1, \nu \partial_{\theta}^{i+1}(\bbi - \bbp) f \rangle  \Big| & \leq \Big( \int_{\bbr^2} \nu^2 \chi_1^2 \,d\omega d\nu \Big)^{\frac{1}{2}} \Big(\int_{\bbr^2} |\partial_{\theta}^{i+1}(\bbi - \bbp) f |^2 \,d\omega d\nu \Big)^{\frac{1}{2}} \cr
  & \leq \|g\|_{\nu} \|\partial_{\theta}^{i+1}(\bbi - \bbp) f \|_{L_{\omega, \nu}^{2}}.
\end{aligned}
\end{align*}
We then use Lemma \ref{L5.6} $(iv)$ and Lemma \ref{L5.5} to find
\begin{align*}
\begin{aligned}
&\|\partial_{\theta}^{i+1} \partial_{\omega}^{j} [\bbp, \nu]f\|_{L_{\omega, \nu}^{2}}^{2} \cr
&\quad \lesssim  |\partial_{\theta}^{i+1}f_0|^2 \int_{\bbr^2} | \partial_{\omega}^{j} (\nu \chi_0)|^2 \,d\omega d\nu  + |\partial_{\theta}^{i+1}f_1|^2 \int_{\bbr^2} | \partial_{\omega}^{j} (\nu \chi_1)|^2 \,d\omega d\nu \\
&\quad  + |\partial_{\theta}^{i+1} f_0|^2 \bigg(\int_{\bbr} \nu g(\nu) \,d\nu \bigg)^2 \int_{\bbr^2} | \partial_{\omega}^{j}  \chi_0|^2 \,d\omega d\nu  + |\partial_{\theta}^{i+1} f_1|^2 \bigg(\int_{\bbr} \nu g(\nu) \,d\nu \bigg)^2 \int_{\bbr^2} | \partial_{\omega}^{j}  \chi_1|^2 \,d\omega d\nu \\
&\quad +  \int_{\bbr^2} \Big|  \langle\chi_0, \nu \partial_{\theta}^{i+1}(\bbi - \bbp) f \rangle  \Big|^2 | \partial_{\omega}^{j}\chi_0|^2 \,d\omega d\nu + \int_{\bbr^2} \Big|  \langle\chi_1, \nu \partial_{\theta}^{i+1}(\bbi - \bbp) f \rangle  \Big|^2 | \partial_{\omega}^{j}\chi_1|^2 \,d\omega d\nu \\
&\quad  \lesssim  \big( \frac{m}{\sigma} \big)^{j} \|g\|_{\nu}^{2} \Big( |\partial_{\theta}^{i+1}f_0|^2 + |\partial_{\theta}^{i+1}f_1|^2 + \|\partial_{\theta}^{i+1}(\bbi - \bbp) f \|_{L_{\omega, \nu}^{2}}^2 \Big).
\end{aligned}
\end{align*}
Thus, we have
\begin{align}
\begin{aligned} \label{E74}
\|\partial_{\theta}^{i+1} \partial_{\omega}^{j} [\bbp, \nu]f\|_{L^{2}}^{2}
\lesssim  \big( \frac{m}{\sigma} \big)^{j} \|g\|_{\nu}^{2} \Big( \|\partial_{\theta}^{i+1}f_0\|_{L^2}^2 + \|\partial_{\theta}^{i+1}f_1\|_{L^2}^2 + \|\partial_{\theta}^{i+1}(\bbi - \bbp) f \|_{L^{2}}^2 \Big).
\end{aligned}
\end{align}
We next use Cauchy inequality with $\epsilon > 0$ and \eqref{E74} to obtain
\begin{align}
\begin{aligned} \label{F-74-1}
|\mathcal{I}_{86}^{2}| &\lesssim \frac{\epsilon}{m} \|\partial_{\theta}^i \partial_{\omega}^j (\bbi - \bbp)f\|_{L^2}^{2} + \frac{m}{\epsilon} \|\partial_{\theta}^{i+1} \partial_{\omega}^j [ \bbp, \nu ]f\|_{L^2}^2 \\
 & \lesssim  \frac{\epsilon}{m} \|\partial_{\theta}^i \partial_{\omega}^j (\bbi - \bbp)f\|_{L^2}^{2} +  \frac{m}{\epsilon} \big(\frac{m}{\sigma} \big)^{j} \|g\|_{\nu}^{2} \Big( \|\partial_{\theta}^{i+1}f_0\|_{L^2}^2 +\|\partial_{\theta}^{i+1}f_1\|_{L^2}^2 +   \|\partial_{\theta}^{i+1} (\bbi - \bbp)f\|_{L^2}^2 \Big).
\end{aligned}
\end{align}
We combine the estimates \eqref{F-58}, \eqref{F-72-1}, and \eqref{F-74-1} to obtain
\begin{align*}
\begin{aligned}
|\mathcal{I}_{86}|
 & \lesssim  \frac{\epsilon}{m} \|\partial_{\theta}^i \partial_{\omega}^j (\bbi - \bbp)f\|_{L^2}^{2} +  \frac{m}{\epsilon} \big(\frac{m}{\sigma} \big)^{j-1}   \Big( \|\partial_{\theta}^{i+1}f_1\|_{L^2}^2 +   \|\partial_{\theta}^{i+1} (\bbi - \bbp)f\|_{L^2}^2 \Big) \\
 & +  \frac{m}{\epsilon} \big(\frac{m}{\sigma} \big)^{j} \|g\|_{\nu}^{2} \Big( \|\partial_{\theta}^{i+1}f_0\|_{L^2}^2 +\|\partial_{\theta}^{i+1}f_1\|_{L^2}^2 +   \|\partial_{\theta}^{i+1} (\bbi - \bbp)f\|_{L^2}^2 \Big).
\end{aligned}
\end{align*}
Now we substitute all the estimates of $\mci_{8i}$, $i =1, \cdots, 6$ into \eqref{F-49} and take $\epsilon > 0$ small enough and use Poincar\'e's inequality to obtain
\begin{align*}
\begin{aligned}
& \frac{d}{dt} \|\partial_{\theta}^i \partial_{\omega}^j (\bbi - \bbp) f\|_{L^2}^2 + \frac{1}{m} \|\partial_{\theta}^i \partial_{\omega}^j (\bbi - \bbp)f\|_{\mu}^2 \\
& \hspace{1cm} \lesssim \left(m + \frac{1}{\sigma} + \frac{m \kappa^2}{\sigma^2} \right)\|\partial_{\theta}^{i+1} \partial_{\omega}^{j-1} (\bbi - \bbp) f\|^2_{\mu} + \frac{\kappa}{\sqrt{m \sigma}} \|\partial_{\theta}^{i} \partial_{\omega}^{j} (\bbi - \bbp) f\|^2_{\mu} \\
& \hspace{1cm} +  \big(\frac{m}{\sigma}\big)^{j-1} \left(\frac{m}{\sigma} \frac{\kappa}{\sqrt{m \sigma}} + \frac{1}{\sigma} + m + \frac{m^2 \|g\|_{\nu}^2}{\sigma}  \right) \Big\{ \|\partial_{\theta}^{i+1}(f_0, f_1)\|_{L^2}^2  + \|\partial_{\theta}^{i+1} (\bbi - \bbp)f\|_{\mu}^2 \Big\}\cr
& \hspace{1cm} + \Big(\frac{1}{\sigma} + \frac{m\kappa^2}{\sigma^2}\Big) \|\partial_\omega^{j-1} (\bbi - \bbp)f\|_{L^2}^2 + \frac{\kappa}{\sqrt{m\sigma}}\|\partial_\omega^j  (\bbi - \bbp)f\|_{\mu}^2 \\
& \hspace{1cm}+ \frac{\kappa}{\sqrt{m\sigma}}(\frac{m}{\sigma})^j \left(\|f_1\|_{L^2}^2 + \| (\bbi - \bbp)f\|_{\mu}^2 \right).
\end{aligned}
\end{align*}
We notice that there exists a positive constant $C>0$ such that
\[
\max\left\{\frac{1}{\sigma} + \frac{m \kappa^2}{\sigma^2}, \,\frac{m}{\sigma}\frac{\kappa}{\sqrt{m \sigma}} + \frac{1}{\sigma} + \frac{m^2 \|g\|_{\nu}^2}{\sigma}\right\} \leq C m
\]
due to our main assumptions. This concludes
\begin{align*}
\begin{aligned}
& \frac{d}{dt} \|\partial_{\theta}^i \partial_{\omega}^j (\bbi - \bbp) f\|_{L^2}^2 + \frac{1}{m} \|\partial_{\theta}^i \partial_{\omega}^j (\bbi - \bbp)f\|_{\mu}^2 \\
& \hspace{1cm} \lesssim  m\|\partial_{\theta}^{i+1} \partial_{\omega}^{j-1} (\bbi - \bbp) f\|^2_{\mu} + m \big(\frac{m}{\sigma}\big)^{j-1} \Big\{ \|\partial_{\theta}^{i+1}(f_0, f_1)\|_{L^2}^2  + \|\partial_{\theta}^{i+1} (\bbi - \bbp)f\|_{\mu}^2 \Big\}\cr
& \hspace{1cm} +  \big( \frac{1}{\sigma}+ \frac{m\kappa^2}{\sigma^2} \big) \|\partial_\omega^{j-1} (\bbi - \bbp)f\|_{L^2}^2 + \frac{\kappa}{\sqrt{m\sigma}}\|\partial_\omega^j  (\bbi - \bbp)f\|_{\mu}^2 + \big( \frac{1}{m} + \frac{\kappa}{\sqrt{m\sigma}}\big) (\frac{m}{\sigma})^j \left(\|f_1\|_{L^2}^2 + \| (\bbi - \bbp)f\|_{\mu}^2 \right)\cr
& \hspace{1cm} \lesssim  m\|\partial_{\theta}^{i+1} \partial_{\omega}^{j-1} (\bbi - \bbp) f\|^2_{\mu} + m^{2j-1} \Big\{ \|\partial_{\theta}^{i+1}(f_0, f_1)\|_{L^2}^2  + \|\partial_{\theta}^{i+1} (\bbi - \bbp)f\|_{\mu}^2 \Big\}\cr
& \hspace{1cm} + m\|\partial_\omega^{j-1} (\bbi - \bbp)f\|_{L^2}^2 + \frac{\kappa}{\sqrt{m\sigma}}\|\partial_\omega^j  (\bbi - \bbp)f\|_{\mu}^2 + m^{2j-1}  \left(\|f_1\|_{L^2}^2 + \| (\bbi - \bbp)f\|_{\mu}^2 \right),
\end{aligned}
\end{align*}
for $1 \leq i + j \leq s$.

%
%




\begin{thebibliography}{10}

\bibitem{A-B} Acebron, J. A.,  Bonilla, L. L., P\'erez Vicente,  C. J. P., Ritort, F. and Spigler, R.: \textit{The Kuramoto model: A simple paradigm for synchronization phenomena.} Rev. Mod. Phys. {\bf 77} (2005), 137-185.


\bibitem{BM16} Barr\'e, J. and M\'etivier, D.: \textit{Bifurcations and singularities for coupled oscillators with inertia and frustration.} Phys. Rev. Lett., {\bf 117} (2016), 214102.

\bibitem{BCM} Benedetto, D., Caglioti, E. and Montemagno, U.: \textit{On the complete phase synchronization of the Kuramoto model in the mean-field limit.} Comm. Math. Sci. {\bf 13} (2015), 1775-1786.

\bibitem{BB} Buck, J. and Buck, E.: \textit{Biology of sychronous flashing of fireflies.} Nature {\bf 211} (1966), 562.

\bibitem{C-C-H-K-K} Carrillo, J. A., Choi, Y.-P., Ha, S.-Y., Kang, M.-J. and Kim, Y.: \textit{Contractivity of transport distances for the kinetic Kuramoto equation.} J. Stat. Phys. {\bf 156} (2014), 395-415.

\bibitem{C-D-M} Carrillo, J. A., Duan, R. and Moussa, A.: \textit{Global classical solutions close to equilibrium to the Vlasov-Fokker-Planck-Euler system.} Kinet. Relat. Models {\bf 4} (2011), 227-258.

\bibitem{Choi} Choi, Y.-P.: \textit{Global classical solutions of the Vlasov-Fokker-Planck equation with local alignment forces}. Nonlinearity, {\bf 29}  (2016), 1887-1916.

\bibitem {C-H-J-K} Choi, Y.-P., Ha, S.-Y.,  Jung, S. and Kim, Y.: \textit{Asymptotic formation and orbital stability of phase-locked states for the Kuramoto model.} Physica D {\bf 241}  (2012),  735-754.

\bibitem{C-H-M} Choi, Y.-P., Ha, S.-Y. and Morales, J.: \textit{Emergent dynamics of the Kuramoto ensemble under the effect of inertia}, preprint (2017).


\bibitem{C-H-L-X-Y} Choi, Y.-P., Ha, S.-Y., Li, Z., Xue, X. and  Yun, S.-B.: \textit{Complete entrainment of Kuramoto oscillators with inertia on networks via gradient-like flow.} J. Diff. Equation {\bf 257} (2014), 2225-2255.

\bibitem{C-H-N} Choi, Y.-P., Ha, S.-Y. and Noh, S.: \textit{Remarks on the nonlinear stability of the Kuramoto model with inertia.} Quart. Appl. Math. {\bf 73} (2015), 391-399.

\bibitem{C-H-Y1} Choi, Y.-P., Ha, S.-Y. and Yun, S.-B.: \textit{Complete synchronization of Kuramoto oscillators with finite inertia.} Physica D {\bf 240} (2011), 32-44.

\bibitem{C-H-Y2} Choi,  Y.-P., Ha, S.-Y. and Yun, S.-B.: \textit{Global existence and asymptotic behavior of measure valued solutions to the kinetic Kuramoto-Daido model with inertia.} Net. Heter. Media {\bf 8} (2013), 943-968.


\bibitem{Diet16} Dietert, H.: \textit{Stability and bifurcation for the Kuramoto model.} J. Math. Pures Appl. {\bf 105} (2016), 451-489.

\bibitem{DFG16} Dietert, H., Fernandez, B. and G\'erard-Varet, D.: \textit{Landau damping to partially locked states in the Kuramoto model.} Comm. Pure Appl. Math. {\bf 71} (2018), 953-993.



\bibitem{D-F-T} Duan, R., Fornasier, M. and Toscani, G.: \textit{A kinetic flocking model with diffusion.} Comm. Math. Phys. {\bf 300} (2010), 95-145.

\bibitem{Erm} Ermentrout, G. B.: \textit{An adaptive model for synchrony in the firefly Pteroptyx malaccae.} J. Math. Biol. {\bf 29} (1991), 571-585.

\bibitem{FR16} Faou, E. and Rousset, F.: \textit{Landau Damping in Sobolev Spaces for the Vlasov-HMF Model.} Arch. Rational Mech. Anal.  {\bf 219} (2016), 887-902.

\bibitem{FGC16} Fernandez, B., G\'erard-Varet, D. and Giacomin, G.: \textit{Landau damping in the Kuramoto model.} Ann. Henri Poincar\'e. {\bf 17} (2016), 1793-1823.

\bibitem{GLP14} Giacomin, G., Lucon, E. and Poquet, C.: \textit{Coherence stability and effect of random natural frequencies in populations of coupled oscillators.} J. Dynam. Differential Equations. {\bf 26} (2014), 333-367.


\bibitem{Guo04} Guo, Y.: \textit{The Boltzmann equation in the whole space.} Indiana Univ. Math. J. {\bf 53} (2004), 1081-1094.

\bibitem{H-L} Ha, S.-Y. and Li, Z.: \textit{Uniqueness and well-ordering of emergent phase-locked states for the Kuramoto model with frustration and inertia.} Math. Meth. Mod. Appl. Sci., {\bf 26} (2016), 357-382.

\bibitem{H-K-M-P} Ha, S.-Y., Kim, Y.-H., Morales, J. and Park, J.: \textit{Emergence of phase concentration for the Kuramoto-Sakaguchi equation.} Submitted.

\bibitem{H-K-P-Z} Ha, S.-Y. Ha, Ko, D., Park, J.  and  Zhang, X.: \textit{Collective synchronization of classical and quantum oscillators.}  EMS Surveys in Mathematical Sciences {\bf 3} (2016), 209-267.

\bibitem{H-K-R} Ha, S.-Y., Kim, H. K. and Ryoo, S. W.: \textit{Emergence of phase-locked states for the Kuramoto model in a large coupling regime.} Comm. Math. Sci. {\bf 14} (2016), 1073-1091.


\bibitem{H-X1} Ha, S.-Y. and Xiao, Q.: \textit{Nonlinear instability of the incoherent state for the Kuramoto-Sakaguchi-Fokker-Plank equation.} J. Stat. Phys. {\bf 160} (2015), 477-496.

\bibitem{H-X2} Ha, S.-Y. and Xiao, Q.: \textit{Remarks on the nonlinear stability of the Kuramoto-Sakaguchi equation.} J. Differential Equations {\bf 259} (2015), 2430-2457.




\bibitem{M-S3} Mirollo, R. E. and Strogatz, S. H.: \textit{Stability of incoherence in a populations of coupled oscillators.} J. Stat. Phy. {\bf 63} (1991), 613-635.

\bibitem{MV11} Mouhot, C. and Villani, C.: \textit{On Landau damping.} Acta Math. {\bf 207} (2011), 29-201.

\bibitem{Kawa83} Kawashima, S.: \textit{Systems of a hyperbolic-parabolic composite type with applications to the
equations of magnetohydrodynamics.} PhD Thesis Kyoto University 1983.

\bibitem{Ku} Kuramoto, Y.:  \textit{International symposium on mathematical problems in mathematical physics.} Lecture Notes in Theoretical Physics  {\bf 30} (1975), 420.

\bibitem{P-R-K} Pikovsky, A., Rosenblum, M. and Kurths, J.: \textit{Synchronization: A universal concept in nonlinear sciences.} Cambridge University Press, Cambridge, 2001.

\bibitem{Saka88} Sakaguchi, H.: \textit{Cooperative phenomena in coupled oscillator systems under external fields.} Progr. Theoret. Phys. {\bf 79} (1988), 39-46.

\bibitem{Str} Strogatz, S. H.: \textit{From Kuramoto to Crawford: exploring the onset of synchronization in populations
of coupled oscillators.} Physica D {\bf 143} (2000), 1-20.

\bibitem{T-L-O1} Tanaka, H. A., Lichtenberg, A. J. and Oishi, S.: \textit{First order phase transition resulting from
finite inertia in coupled oscillator systems.} Phys. Rev. Lett. {\bf 78} (1997), 2104.

\bibitem{T-L-O2} Tanaka, H. A., Lichtenberg, A. J. and Oishi, S.: \textit{Self-synchronization of coupled
oscillators with hysteretic responses.} Physica D {\bf 100} (1997), 279-300.



\bibitem{Wi1} Winfree, A. T.: \textit{The geometry of biological time.} Springer New York 1980.

\bibitem{Wi2} Winfree, A. T.: \textit{Biological rhythms and the behavior of populations of coupled oscillators.} J. Theor. Biol. {\bf 16} (1967), 15-42.

\end{thebibliography}
\end{document}